\numberwithin{equation}{section}
\newcommand{\NN}{\mathbb{N}}
\newcommand{\HH}{\mathbb{H}}
\newcommand{\RR}{\mathbb{R}}
\newcommand{\Om}{\Omega}
\newcommand{\abs}[1]{\left\vert #1 \right\vert}
\newcommand{\set}[1]{\left\{ #1 \right\}}
\newcommand{\norm}[1]{\left\Vert #1 \right\Vert}
\newcommand{\inner}[1]{\left\langle#1\right\rangle}
\newcommand{\pinner}[1]{\left(#1\right)}
\newcommand{\D}{\mathcal{D}}
\renewcommand{\tilde}{\widetilde}
\renewcommand{\complement}{\mathtt{C}}
\DeclareMathOperator{\supp}{supp}
\renewcommand{\d}{\mathrm{d}}
\renewcommand{\epsilon}{\varepsilon}
\theoremstyle{definition}
\newtheorem{Def}{Definition}[section]
\theoremstyle{remark}
\newtheorem{rem}{Remark}[section]
\theoremstyle{plain}
\newtheorem{thm}{Theorem}
\newtheorem{prop}{Proposition}[section]
\newtheorem{lem}[prop]{Lemma}
\newtheorem{cor}[prop]{Corollary}
\title[Critical Weighted $p$-Laplace Equations]{A Struwe-type Decomposition Result for Weighted Critical $p$-Laplace Equations}
\author{Edward Chernysh}
\date{\today}
\thanks{This work constitutes part of the author's PhD thesis at McGill University, which was completed with the support of the Canada Graduate Scholarships -- Doctoral (CGS D) administered by the Natural Sciences and Engineering Research Council of Canada (NSERC)}
\begin{document}
	
	\begin{abstract}
		We establish Struwe-type decompositions of Palais-Smale sequences for a class of critical $p$-Laplace equations of the Caffarelli-Kohn-Nirenberg type in a bounded domain $\Omega\subset\RR^n$, $n\ge2$, containing the origin. In doing so, we highlight important differences introduced by the weights and require new rescaling laws to account for this new framework.
	\end{abstract}
	
	\maketitle
	
	\section{Introduction}
	Let $\Om \subset \RR^n$ be a bounded domain containing the origin and fix a dimension $n \geq 2$. For any $1 < p < n$, we let $q$ be the critical Caffarelli-Kohn-Nirenberg (henceforth abbreviated CKN) exponent given by the rule
	\begin{equation}
		q := \frac{np}{n-p(1+a-b)},
	\end{equation}
	whilst subject to the constraints
	\begin{equation}\label{eq:conditions}
		a < \frac{n-p}{p} \quad \text{and} \quad a \leq b < a+1.
	\end{equation}
	In particular, condition \eqref{eq:conditions} implies that $\max(ap, qb) < n$ and $p < q \le p^\ast$ where
	\[
	p^\ast := \frac{np}{n-p}
	\]
	is the critical Sobolev constant. Therefore, the weight functions ${x} \mapsto \abs{x}^{-ap}$ and $x \mapsto\abs{x}^{-bq}$ both belong to $L^1_{\text{loc}}(\RR^n)$. 
	Let us also define a homogeneity exponent:
	\begin{equation}\label{eq:exponent}
		\gamma := \frac{n-bq}{q} = \frac{n}{q} - b = \frac{n-p(1+a-b)}{p} - b = \frac{n-p(1+a)}{p}.
	\end{equation}
	In this article, we investigate the loss of compactness for Palais-Smale sequences (oft abbreviated $(PS)$-sequences) associated to the critical problem
	\begin{equation}\label{eq:probOm}\tag{A}
		\begin{cases}
			-\operatorname{div}\left(\abs{x}^{-ap}\abs{\nabla u}^{p-2}\nabla u\right) = \abs{x}^{-bq}\abs{u}^{q-2}u &\text{in }\Om\\
			u\in \D_a^{1,p}\left(\Om, 0\right)
		\end{cases}
	\end{equation}
	where $\D_a^{1,p}(\Omega, 0)$ denotes a homogeneous Sobolev space, with singularity at the origin, obtained via completion of $C_c^\infty(\Omega)$ with respect to a weighted norm. In fact, given an open subset $U$ of $\RR^n$ and singularity $x_0\in \RR^n$, we define
	\[
	\D_a^{1,p}(U, x_0) = \D^{1,p}(U, \abs{x-x_0}^{-ap})
	\]
	(simply written $\D^{1,p}(U)$ if $a=0$) as the completion of $C_c^\infty(U)$ under the norm
	\begin{equation}\label{xnorm}
		\norm{\cdot}_{\D_a^{1,p}(U, x_0)} := \left( \int_U \abs{\nabla (\cdot)}^p\abs{x-x_0}^{-ap}\d{x}\right)^{1/p}.
	\end{equation}
	Problem \eqref{eq:probOm} is the Euler-Lagrange problem for the Caffarelli-Kohn-Nirenberg (CKN)-inequality (see \S\ref{section:historical context and classification}) in \(\Omega\), and may be described via the energy functional $\phi : \D_a^{1,p}(\Om, 0) \to \RR$, given by
	\[
	\phi(u) := \int_\Omega \abs{\nabla u}^p\abs{x}^{-ap}\d{x} - \int_\Omega\abs{u}^q\abs{x}^{-bq}\d{x}.
	\]
	Indeed,  $u\in \D_a^{1,p}(\Om, 0)$ solves problem \eqref{eq:probOm} provided
	\[	\inner{\phi^\prime(u), h} = 0 	\qquad\forall h\in \D_a^{1,p}(\Om, 0). \]
	Here, with $p^\prime:=p/(p-1)$ and $\D^{-1,p^\prime}_a(U, x_0)$ being the topological dual of $\D_a^{1,p}(U, x_0)$, the map $\phi^\prime : \D_a^{1,p}(\Om, 0)\to\D_a^{-1,p^\prime}(\Om, 0)$ denotes the (Fr\'echet) derivative of $\phi$.
	
	Formally, in this notation, a $(PS)$-sequence for \eqref{eq:probOm} is a sequence $(u_\alpha)$ in $\D_a^{1,p}(\Om, 0)$ of bounded energy (meaning $\phi(u_\alpha)$ is uniformly bounded in $\alpha$) such that $\phi^\prime(u_\alpha)\to 0$ strongly in $\D_a^{-1, p^\prime}(\Om, 0)$ as $\alpha\to\infty$. The purpose of our paper is to establish a Struwe-type decomposition result for the weighted problem \eqref{eq:probOm}. 

	Loosely speaking, these decompositions provide an asymptotic expansion of a given $(PS)$-sequence, within the energy space, in terms of perturbed and rescaled solutions to some ``limiting'' partial differential equations. An important distinction in the general weighted case (i.e. $(a,b) \ne (0,0)$) is the occurrence of this next weighted limiting problem:
	\begin{equation}\label{eq:probLim}\tag{B}
		\begin{cases}
			-\operatorname{div}\left(\abs{x}^{-ap}\abs{\nabla u}^{p-2}\nabla u\right) = \abs{x}^{-bq}\abs{u}^{q-2}u &\text{in }\RR^n\\
			u\in \D_a^{1,p}\left(\RR^n, 0\right)
		\end{cases}
	\end{equation}
	whose associated energy functional $\phi_{0, \infty} : \D_a^{1,p}(\RR^n, 0) \to \RR$ is given by
	\[
	\phi_{0, \infty}(u) := \int_{\RR^n} \abs{\nabla u}^p\abs{x}^{-ap}\d{x} - \int_{\RR^n}\abs{u}^q\abs{x}^{-bq}\d{x}.
	\]

	We distinguish two cases:  \(a\ne b\) and $a =b$. The case \(a = b \ne 0\) is the most challenging and features a new blow-up configuration, which occurs only in this case. However, we shall begin by presenting the case $a \ne b$, which also deviates from the unweighted model case. Formally, when \(a\ne b\), the following theorem holds:
	
	\begin{thm}\label{thm:main a<b}	
		Let $\Om \subset \RR^n$ be a bounded domain containing the origin and let $(u_\alpha)$ be a Palais-Smale sequence for \eqref{eq:probOm}. Assume $a \neq b$ and let $\gamma > 0$ be the homogeneity exponent from \eqref{eq:exponent}. Then, there exists a subsequence $(u_\beta)$ of $(u_\alpha)$ along with
		\begin{enumerate}[label=(\arabic*)]
			\item a solution $v_0$ of \eqref{eq:probOm};
			\item finitely many non-trivial functions $w_1,\dots, w_k$ for the weighted limiting problem \eqref{eq:probLim} and, for each $j=1,\dots, k$, a sequence $(\lambda_\beta^{(j)})$ in $(0,\infty$) such that
			
			\[
			\lambda_\beta^{(j)} \to 0, \quad \text{as } \beta \to \infty.
			\]
		\end{enumerate}
		Furthermore, as $\beta \to \infty$, we obtain
		\begin{enumerate}[label=(\roman*)]
			\item Decomposition in the Energy Space:
			\begin{equation}\label{eq:decomposition in the energy space a<b}
				u_\beta - v_0 - \sum_{j=1}^k \left( \lambda_\beta^{(j)} \right)^{-\gamma} w_j\left( \frac{\cdot}{\lambda_\beta^{(j)} }\right)  \to 0\quad\text{ in } \D^{1,p}_a(\RR^n, 0)
			\end{equation}
			\item Norm Decomposition:
			\[
			\norm{u_\beta}_{\D^{1,p}_a(\Om, 0)}^p \to \norm{v_0}_{\D^{1,p}_a(\Om, 0)}^p + \sum_{j=1}^k \norm{w_j}_{\D_a^{1,p}(\RR^n, 0)}^p
			\]
			\item Energy Decomposition:
			\[
			\phi(u_\beta) \to \phi(v_0) + \sum_{j=1}^k \phi_{0,\infty}(w_j)
			\]
		\end{enumerate}
	\end{thm}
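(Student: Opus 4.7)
The plan is to imitate the classical Struwe argument, but with the rescalings $u(x)\mapsto \lambda^{-\gamma}u(x/\lambda)$ built from the homogeneity exponent $\gamma$ in \eqref{eq:exponent}, since this is precisely the family of transformations under which the weighted Dirichlet energy and the CKN critical term are both invariant. First, I would show that any $(PS)$-sequence is bounded in $\D_a^{1,p}(\Om, 0)$: the identity $\phi(u_\alpha)-\tfrac1q\inner{\phi^\prime(u_\alpha), u_\alpha}=\pinner{1-p/q}\norm{u_\alpha}_{\D_a^{1,p}(\Om,0)}^p$ plus the $(PS)$-condition does this. Reflexivity then yields $u_\beta \rightharpoonup v_0$ weakly along a subsequence, and a weak-continuity argument for $\phi^\prime$ (combined with $a.e.$ convergence of $\nabla u_\beta$ obtained via the Boccardo--Murat almost-everywhere gradient convergence trick, suitably adapted to the weights) shows that $v_0$ solves \eqref{eq:probOm}.

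Next, I would set $u_\beta^{(1)}:=u_\beta - v_0$, viewed canonically as an element of $\D_a^{1,p}(\RR^n,0)$ by zero extension. A Brezis--Lieb type splitting for the weighted $p$-energy and for the weighted critical integrand (both with respect to $|x|^{-ap}\d{x}$ and $|x|^{-bq}\d{x}$) then gives
\[
\phi(u_\beta) = \phi(v_0) + \phi_{0,\infty}(u_\beta^{(1)}) + o(1), \qquad \phi^\prime_{0,\infty}(u_\beta^{(1)}) \to 0
\]
in $\D_a^{-1,p^\prime}(\RR^n,0)$, so $(u_\beta^{(1)})$ is a $(PS)$-sequence for $\phi_{0,\infty}$ converging weakly to zero. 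If $u_\beta^{(1)}\to 0$ strongly there is nothing left to do; otherwise, since $a\neq b$ the singularity of both weights forces any loss of compactness to occur at the origin. Concretely, I would exploit a local CKN embedding to see that $(u_\beta^{(1)})$ is relatively compact in $L^q_{\text{loc}}(\RR^n\setminus\{0\}, |x|^{-bq}\d{x})$, so there must exist a scale $\lambda_\beta^{(1)}\to 0$ at which a fixed, positive fraction of the CKN-norm concentrates near the origin. Using a Levy-type concentration function adapted to the weighted measure, I would choose $\lambda_\beta^{(1)}$ so that the rescaled sequence $\tilde u_\beta^{(1)}(x):=(\lambda_\beta^{(1)})^{\gamma}u_\beta^{(1)}(\lambda_\beta^{(1)} x)$ is $\D_a^{1,p}$-norm preserving and admits a nontrivial weak limit $w_1\in \D_a^{1,p}(\RR^n, 0)$; the scale-invariance of $\phi_{0,\infty}$ (this is where the specific exponent $\gamma$ matters) then ensures that $w_1$ solves \eqref{eq:probLim}.

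I would then iterate by setting
\[
u_\beta^{(2)} := u_\beta^{(1)} - \bigl(\lambda_\beta^{(1)}\bigr)^{-\gamma}\, w_1\!\left(\frac{\cdot}{\lambda_\beta^{(1)}}\right),
\]
verify via another Brezis--Lieb splitting (now under rescaling) that $(u_\beta^{(2)})$ is again $(PS)$ for $\phi_{0,\infty}$ weakly converging to zero with $\norm{u_\beta^{(2)}}^p = \norm{u_\beta^{(1)}}^p - \norm{w_1}^p + o(1)$, and repeat. The process stops after finitely many steps because any nontrivial solution of \eqref{eq:probLim} carries energy bounded below by a fixed $\epsilon_0>0$ (obtained by testing the equation against $w$ itself and using the CKN inequality to lower-bound $\norm{w}_{\D_a^{1,p}(\RR^n,0)}$), while the total energy is uniformly bounded; the norm and energy decompositions (ii), (iii) then fall out of this bookkeeping, and summing all of the subtractions yields (i).

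The main obstacle is the concentration step leading to the first nontrivial profile $w_1$: I must produce the correct scale $\lambda_\beta^{(1)}$ intrinsically from the weighted measure $|x|^{-bq}\d{x}$, ensure that the weak limit $\tilde u_\beta^{(1)}\rightharpoonup w_1$ is \emph{nontrivial} (avoiding the vanishing alternative in the $p$-Laplace setting requires a careful local CKN embedding away from $0$ together with a nontriviality threshold coming from the best constant), and check that no mass is lost at either $0$ or $\infty$ after rescaling; once this is achieved, the iteration and the termination argument follow the standard Struwe template with the weight-adapted scaling replacing the classical one.
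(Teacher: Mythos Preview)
Your plan is essentially correct and follows the same Struwe template as the paper: extract $v_0$, subtract, set up a L\'evy concentration function, rescale to capture a nontrivial profile, iterate, and terminate via the uniform energy lower bound from Proposition~\ref{prop:energy}. The bookkeeping you describe (Brezis--Lieb splitting, norm and energy decompositions) matches the paper's Lemma~\ref{lem:extraction} and Proposition~\ref{prop:iteration msc}.

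There is, however, one genuine methodological difference worth flagging. To force concentration at the origin you invoke the fact that when $a<b$ one has $q<p^\ast$, so the embedding is locally compact away from $0$; this is a clean and direct route. The paper instead runs the L\'evy function with \emph{arbitrary} centers $y_\alpha\in\overline\Om$, picks a maximizing $y_\alpha^1$, and then proves in Step~4 that $y_\alpha^1/\lambda_\alpha^1$ must stay bounded by applying the new transform $\tau_{y,\lambda}$ of \S3: Lemma~\ref{lem:rescale_mod_bound} gives $\norm{\tau_{y,\lambda}u}_{\D^{1,p}(\RR^n)}\le C(\lambda/|y|)^{b-a}\norm{u}_{\D_a^{1,p}}$, which kills the rescaled sequence in $\D^{1,p}$ when $|y_\alpha|/\lambda_\alpha\to\infty$ and $a<b$, contradicting the concentration level $\delta$. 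The payoff of the paper's route is that the same machinery (with the factor $(\lambda/|y|)^{b-a}$ now equal to $1$) carries over verbatim to the limit case $a=b$, where your subcritical-compactness argument is unavailable; conversely, your approach is more elementary for the case at hand and avoids introducing $\tau_{y,\lambda}$ altogether.

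One caution: if you center the L\'evy function only at the origin, the nontriviality argument for $w_1$ (the analogue of the paper's Step~6) becomes delicate, because that step relies on a \emph{uniform} upper bound $\sup_y\int_{B(y,1)}|v_\alpha|^q\,|x|^{-bq}\d x\le\delta$ over all centers, not just $y=0$, to run the absorption inequality. You will likely still need the general-center L\'evy function---your compactness observation then tells you a posteriori that the optimal centers satisfy $y_\alpha\to 0$, which is consistent with (and slightly weaker than) the paper's conclusion that $y_\alpha/\lambda_\alpha$ is bounded.
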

	
	It should be noted that whenever our $(PS)$-sequence \((u_\alpha)\) is non-negative, the functions \(v_0\), \(w_j\) are themselves non-negative.
	
	The first result of this type was established by Struwe \cite{struwe1984global} in the case $p=2$ without weights (i.e. when $a=b=0$). In later years, assuming $a=b=0$, Mercuri-Willem \cite{mercuri-willem} showed that the decomposition obtained by Struwe \cite{struwe1984global} holds for non-negative $(PS)$-sequences and all $p \in (1,n)$.
	
	We remark that in the conclusions of Theorem \ref{thm:main a<b}, all ``bubbles'' (i.e. the rescaled $w_j$'s in \eqref{eq:decomposition in the energy space a<b}) concentrate at the origin. This stands in sharp contrast to the model case $a=b=0$, considered by Struwe \cite{struwe1984global} and Mercuri-Willem \cite{mercuri-willem}, wherein the bubbles may concentrate at any point of $\overline{\Om}$. Consequently, when $a<b$, one does not need to impose regularity conditions on $\partial \Om$.

	Deviating strongly from the classical model case, in the limit case $a=b \ne 0$, a $(PS)$-sequence for \eqref{eq:probOm} may decompose (at the energy level) in terms of 3 potential limiting problems: the weighted problem \eqref{eq:probLim} and problems \eqref{eq:probLimWeightless}, \eqref{eq:probLimWeightlessHalf} which we state after our next main result.

	Before stating the theorem in the case $a=b$, we define a key transform $\tau^-$ in terms of a fixed function $\eta \in C_c^\infty(\RR^n; [0,1])$ supported in \(B(0,1)\) and chosen such that \(\eta \equiv  1\) on \(B(0,1/2)\). Then, given $y\in \RR^n$ and $\lambda > 0$, we define a transformation $\tau^-_{y, \alpha}$ according to the rule
	\[
	\tau^-_{y,\lambda}h(z) := 
	\begin{cases}
		\displaystyle \lambda^{-\gamma}h\left(\frac{z-y}{\lambda}\right) &\text{if } a=b=0\\
		\displaystyle\left(\frac{\lambda}{\abs{y}}\right)^{-b} \lambda^{-\gamma}h\left(\frac{z-y}{\lambda}\right)\eta\left(\frac{2(z-y)}{\abs{y}}\right) &\text{if } a=b \ne 0\,.
	\end{cases}
	\]
	We point out the presence of the factor \(\left(\frac{\lambda}{\abs{y}}\right)^{-b}\), a striking difference from the classical unweighted case $a=b=0$. This difference is discussed in more details after the statement of the theorem.
	
	\begin{thm}\label{thm:main a=b}
		Suppose $\Om\subset \RR^n$ is a bounded $C^1$-domain containing the origin and let $(u_\alpha)$ be a Palais-Smale sequence for problem \eqref{eq:probOm}. Assume $a = b$ (so that \(q=p^\ast\)) and let $\gamma > 0$ be the homogeneity exponent from \eqref{eq:exponent}. Then, there exists a subsequence $(u_\beta)$ of $(u_\alpha)$ along with
		\begin{enumerate}
			\item A solution $v_0$ of \eqref{eq:probOm}.
			\item Finitely many non-trivial functions $w_1, \dots, w_k$ for the weighted limiting problem \eqref{eq:probLim} and, for each $j=1, \dots, k$, a sequence $\left(\mu_\beta^{(j)}\right)$ in $(0, \infty)$ such that
			\(
			\mu_\beta^{(j)} \to 0
			\)
			as $\beta\to \infty$.
			\item Finitely many non-trivial solutions $v_1, \dots, v_l$ for an unweighted limiting problem \eqref{eq:probLimWeightless} in $\RR^n$ (see below) and, for each $j=1, \dots, l$, sequences $\left(\lambda_\beta^{(j)}\right)_\beta$ in $(0, \infty)$ and $\left(y_\beta^{(j)}\right)_\beta$ in $\Om$ satisfying, as $\beta\to \infty$,
			\[
			\lambda_\beta^{(j)} \to 0,\qquad \frac{\abs{y_\beta^{(j)}}}{\lambda_\beta^{(j)}} \to \infty \qquad \frac{\operatorname{dist}(y_\alpha^{(j)}, \partial \Om)}{\abs{\lambda^{(j)}_\alpha}}\to \infty
			\]
			\item Finitely many non-trivial solutions $v^\HH_1, \dots, v^\HH_{l^\HH}$ for an unweighted limiting problem \eqref{eq:probLimWeightlessHalf} in the half-space $\HH$ (see below) and, for each $j=1, \dots, l^\HH$, sequences $(\lambda_\beta^{\HH,(j)})_\beta $ in $ (0, \infty)$ and $(y_\beta^{\HH,(j)})_\beta $ in $ \Om$ satisfying, as $\beta\to\infty$,
			\[
			\lambda_\beta^{\HH,(j)} \to 0,\qquad \frac{\abs{y_\beta^{\HH,(j)}}}{\lambda_\beta^{\HH,(j)}}\to\infty,
			\qquad
			\frac{\operatorname{dist}(y_\beta^{\HH,(j)}, \partial \Om)}{\abs{\lambda^{\HH,(j)}_\beta}} \text{  is bounded.}
			\]
		\end{enumerate}
		These are such that the following decompositions are true as $\beta \to \infty$
		\begin{enumerate}[label=(\roman*)]
			\item Decomposition in the Energy Space:
			\[
			u_\beta 
			- v_0 
			- \sum_{j=1}^k \left(\mu_\beta^{(j)}\right)^{-\gamma}w_j\left(\frac{z}{\mu_\beta^{(j)}}\right)
			- \sum_{j=1}^l\left[\tau^-_{\lambda_\beta^{(j)}, y_\beta^{(j)}}v_j \right]
			- \sum_{j=1}^{l^\HH}\left[\tau^-_{\lambda_\beta^{\HH,(j)}, y_\beta^{\HH,(j)}}V^\HH_j \right]
			\to 0
			\]
			in $\D_a^{1,p}(\RR^n,0)$. Here, $V_j^\HH$ are translated rotations of $v_j^\HH$.
			\item Norm Decomposition:
			\[
			\norm{u_\beta}^p_{\D^{1,p}(\Om,0)} 
			\to
			\norm{v_0}_{\D_a^{1,p}(\Om,0)}
			+ \sum_{j=1}^k\norm{w_j}_{\D_a^{1,p}(\RR^n,0)}
			+ \sum_{j=1}^l\norm{v_j}_{\D^{1,p}(\RR^n)}
			+ \sum_{j=1}^{l^\HH}\norm{v_j^\HH}_{\D^{1,p}(\HH)}
			\]
			\item Energy Decomposition:
			\[
			\phi(u_\beta) 
			\to 
			\phi(v_0) 
			+ \sum_{j=1}^k \phi_{0,\infty}(w_j)
			+ \sum_{j=1}^l \phi_\infty(v_j)
			+ \sum_{j=1}^{l^\HH} \phi_\infty(v_j^\HH)
			\]
		\end{enumerate}
	\end{thm}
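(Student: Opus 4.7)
The plan is to iterate a Struwe-type bubble-extraction argument, now with three competing concentration mechanisms. First I would show that $(u_\alpha)$ is bounded in $\D_a^{1,p}(\Om,0)$ by combining $\phi(u_\alpha)=O(1)$ with $\inner{\phi^\prime(u_\alpha),u_\alpha}=o(\norm{u_\alpha}_{\D_a^{1,p}(\Om,0)})$ and using $p<q$, then extract a weak limit $v_0$. Standard arguments based on a.e.\ convergence together with a weighted Rellich compactness on compact subsets of $\Om\setminus\set{0}$ (and a cut-off near the singular point) show that $v_0$ is a weak solution of \eqref{eq:probOm}. Setting $u_\alpha^{(1)}:=u_\alpha-v_0$, a weighted Brezis--Lieb splitting verifies that $(u_\alpha^{(1)})$ is a $(PS)$-sequence for $\phi_{0,\infty}$ (after extension by zero to $\RR^n$) at the level $c-\phi(v_0)$.

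The core of the argument is an inductive \emph{bubble-extraction} step applied to any non-vanishing $(PS)$-sequence $(v_\alpha)$ for $\phi_{0,\infty}$. Using the concentration function
\[
Q_\alpha(\rho):=\sup_{y\in\RR^n}\int_{B(y,\rho)} \abs{v_\alpha}^q\abs{x}^{-bq}\,\d x,
\]
I pick $\lambda_\alpha\to 0$ and $y_\alpha$ so that $Q_\alpha(\lambda_\alpha)$ equals a suitably small fixed threshold, and then trichotomize according to the asymptotics of $\abs{y_\alpha}/\lambda_\alpha$ and $\operatorname{dist}(y_\alpha,\partial\Om)/\lambda_\alpha$. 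If $\abs{y_\alpha}/\lambda_\alpha$ stays bounded, I may translate the center to the origin and rescale by $\lambda_\alpha^{-\gamma} v_\alpha(\lambda_\alpha\cdot)$, producing a non-trivial weak limit $w\in\D_a^{1,p}(\RR^n,0)$ that solves \eqref{eq:probLim}. If $\abs{y_\alpha}/\lambda_\alpha\to\infty$, the transform $\tau^-_{y_\alpha,\lambda_\alpha}$, thanks to its $(\lambda/\abs{y})^{-b}$ prefactor and cut-off $\eta$, converts the weighted energy on $B(y_\alpha,\abs{y_\alpha}/2)$ (where $\abs{x}\approx\abs{y_\alpha}$) into the unweighted Dirichlet energy; the resulting rescaled sequence converges weakly to a non-trivial solution on $\RR^n$ (yielding \eqref{eq:probLimWeightless}) when $\operatorname{dist}(y_\alpha,\partial\Om)/\lambda_\alpha\to\infty$, and to a solution on the half-space $\HH$ (yielding \eqref{eq:probLimWeightlessHalf}) after flattening $\partial\Om$ by a translated rotation when that ratio stays bounded; the $C^1$-regularity of $\partial\Om$ is used here.

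Having produced one bubble, I subtract its (rescaled) profile from $v_\alpha$, check via the weighted Brezis--Lieb splitting that the remainder is again a $(PS)$-sequence for $\phi_{0,\infty}$ at a strictly lower level, and iterate. The procedure terminates in finitely many steps since any non-trivial solution of \eqref{eq:probLim}, \eqref{eq:probLimWeightless}, or \eqref{eq:probLimWeightlessHalf} has energy bounded below by a fixed positive constant (related to the best CKN/Sobolev constants), exhausting the bounded energy budget. The asymptotic decompositions (i)--(iii) follow from the iterated splittings combined with pairwise \emph{asymptotic orthogonality} of the bubble families: rescaled profiles with different centers, scales, or types have cross Dirichlet and $L^q$ inner products tending to zero.

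The main obstacle is the trichotomy in the extraction step, and specifically the verification that $\tau^-_{y_\alpha,\lambda_\alpha}$ preserves the weighted norm to leading order when the bubble is located away from the origin: this is exactly the role of the $(\lambda/\abs{y})^{-b}$ prefactor, which is what forces the limit to live in an \emph{unweighted} homogeneous Sobolev space and solve an unweighted equation. The closely related subtlety is proving asymptotic orthogonality between a weighted bubble at the origin with scale $\mu_\beta^{(j)}$ and an unweighted bubble with $\abs{y_\beta^{(j)}}/\lambda_\beta^{(j)}\to\infty$ when $\mu_\beta^{(j)}$ and $\lambda_\beta^{(j)}$ are of comparable size: here orthogonality relies crucially on the cut-off $\eta(2(z-y_\beta^{(j)})/\abs{y_\beta^{(j)}})$ confining the unweighted bubble to an annulus around $y_\beta^{(j)}$ that is, at scale $\mu_\beta^{(j)}$, far from the origin.
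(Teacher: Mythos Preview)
Your proposal is essentially the same approach as the paper's: extract a weak limit, use a L\'evy concentration function to locate the next bubble, trichotomize according to $|y_\alpha|/\lambda_\alpha$ and $\operatorname{dist}(y_\alpha,\partial\Om)/\lambda_\alpha$, transform to the unweighted space when the bubble is away from the origin, subtract and iterate, and terminate via the uniform energy lower bound from Proposition~\ref{prop:energy}.

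Two small corrections are worth flagging. First, the transform that carries the weighted sequence to the unweighted space is $\tau_{y_\alpha,\lambda_\alpha}$, not $\tau^-_{y_\alpha,\lambda_\alpha}$; the latter goes the other way and is used only to pull the extracted profile back into $\D_a^{1,p}(\RR^n,0)$ when you subtract it. Second, after subtracting an unweighted bubble the remainder $u_\alpha-\tau^-_{y_\alpha,\lambda_\alpha}v$ need not lie in $\D_a^{1,p}(\Om,0)$, so the hypotheses of the iteration (which require $\phi'\to 0$ in the dual of $\D_a^{1,p}(\Om,0)$) are not directly met; the paper handles this by replacing $\tau^-_{y_\alpha,\lambda_\alpha}v$ with an approximating sequence $\tilde\psi_\alpha\in C_c^\infty(\Om)$ (Step~8) and running the next iteration on the auxiliary sequence $\tilde u_\alpha^2=u_\alpha^1-\tilde\psi_\alpha$. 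Finally, your closing concern about pairwise asymptotic orthogonality between weighted and unweighted bubbles is unnecessary: the norm and energy decompositions follow directly from the iterated Br\'ezis--Lieb-type splittings in Propositions~\ref{prop:iteration msc} and~\ref{prop:iteration}, with no separate orthogonality argument required.
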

	\begin{rem}
		It should be noted that whenever our $(PS)$-sequence \((u_\alpha)\) is non-negative the functions \(v_0\), \(w_j\), \(v_j\), \(v_j^\HH\) are themselves non-negative. Therefore, in this case, thanks to a non-existence result of Mercuri-Willem \cite{mercuri-willem}, we may rule out the possibility of solutions to the half-space problem \eqref{eq:probLimWeightlessHalf}.
	\end{rem}
	
	As stated before the theorem, we observe three different bubbling configurations involving three limiting problems. First, we have the weighted problem \eqref{eq:probLim} that we previously encountered within the $a< b$ case. The second, is an entire unweighted analogue given by

	\begin{equation}\label{eq:probLimWeightless}\tag{C}
		\begin{cases}
			-\operatorname{div}\left(\abs{\nabla u}^{p-2}\nabla u\right) = \abs{u}^{p^\ast-2}u &\text{in }\RR^n\\
			u\in \D^{1,p}\left(\RR^n\right),
		\end{cases}
	\end{equation}
	with associated energy functional
	\[
	\phi_\infty:\D^{1,p}(\RR^n)\supseteq\D^{1,p}(\HH) \to \RR,\qquad
	\phi_\infty(u) := \int_{\RR^n} \abs{\nabla u}^p\d{x} - \int_{\RR^n}\abs{u}^{p^\ast}\d{x}.
	\]
	Furthermore, as one cannot consistently rule out ``bubbling" near the boundary of $\Om$, we are forced  to also consider the unweighted limiting problem set in the half-space $\HH := \set{x\in \RR^n : x_n > 0}$,
	\begin{equation}\label{eq:probLimWeightlessHalf}\tag{D}
		\begin{cases}
			-\operatorname{div}\left(\abs{\nabla u}^{p-2}\nabla u\right) = \abs{u}^{p^\ast-2}u &\text{in }\HH\\
			u\in \D^{1,p}\left(\HH\right).
		\end{cases}
	\end{equation}

	It is notable that Theorem \ref{thm:main a=b}, in contrast to Theorem \ref{thm:main a<b} and the model case studied by Struwe \cite{struwe1984global} and Mercuri-Willem \cite{mercuri-willem}, may produce bubbles corresponding to three problems, specifically \eqref{eq:probLim}, \eqref{eq:probLimWeightless}, \eqref{eq:probLimWeightlessHalf}. Nevertheless, it is Theorem \ref{thm:main a=b} that generalizes the model case $a=b=0$. 
	
	What is more, in Theorem \ref{thm:main a=b}, a striking distinction in our result is that we find bubbles derived from solutions belonging to different energy spaces (i.e. weighted \emph{and} unweighted spaces). Therefore, we require a new transform $\tau_{y, \lambda}^-$ that ``transfers information'' from the unweighted space $\D^{1,p}(\RR^n)$ to its weighted counterpart $\D_a^{1,p}(\RR^n,0)$. In said transform, we introduce a factor which does not appear in the model case: a power of $\lambda/\abs{y}$ accounting for the effect of the weighted measures $\abs{x}^{-ap}\d{x}$ and $\abs{x}^{-bq}\d{x}$, which are controlled via a cutoff function. Of course, it is important to observe that the $\lambda/\abs{y}$ ratio vanishes when $a=b=0$ and the cutoff is unnecessary in this case, at which point we recover the classical transform used by Struwe \cite{struwe1984global} and Mercuri-Willem \cite{mercuri-willem}. 
	
	Briefly addressing the proofs of Theorems \ref{thm:main a<b}-\ref{thm:main a=b}, it is instructive to observe that the transformation rules employed within our strategy deviate significantly from those used by Struwe \cite{struwe1984global} and Mercuri-Willem \cite{mercuri-willem}. Speaking informally, in the case $a < b$, we require a transform that accounts for the lack of translation invariance introduced by the aforementioned weighted measures. However, such a transform may seem counter-intuitive since it requires knowing, a priori, that the bubbles concentrate at the origin.  We also note that the proof of Theorem \ref{thm:main a<b} coincides with a sub-argument in the $a=b$ case and, on that account, constitutes a key facet of the global picture. In addition to this, since when $a=b$ we obtain bubbles derived from solutions belonging to distinct energy spaces, we shall also requires a transform $\tau_{y, \lambda}$ from the weighted space $\D_a^{1,p}(\RR^n,0)$ to $\D^{1,p}(\RR^n)$, which acts as a ``pseudo-inverse" to $\tau_{y, \lambda}^-$ appearing in Theorem \ref{thm:main a=b}; both $\tau_{y, \lambda}$ and $\tau_{y, \lambda}^-$ will be discussed in greater depth within \S2. We note that $\tau_{y, \lambda}$ will also be central in the proof of Theorem \ref{thm:main a<b}, where it is used to rule out bubbling away from the origin.

	\subsection{Historical Context and Classifications}\label{section:historical context and classification} Struwe-type decompositions find their roots in the pioneering 1984 work of Michael Struwe \cite{struwe1984global}, where the author establishes the contents of Theorem \ref{thm:main a=b} in the case $p=2$ without weights ($a=b=0$). Therein, a non-existence result of Pohozaev \cite{pohozaevNonexistence} was invoked to rule out the possibility of solutions to \eqref{eq:probLimWeightlessHalf}. Therefore, in this case, one exclusively encounters solutions to \eqref{eq:probLimWeightless}. Still requiring that $a=b=0$, the result of Struwe was improved by Mercuri-Willem \cite{mercuri-willem} to include all $1 < p < n$, provided the $(PS)$-sequence is non-negative. In this setting, thanks to a non-existence result of Mercuri-Willem \cite{mercuri-willem}, they once again only produce functions satisfying  problem \eqref{eq:probLimWeightless}. We also refer the reader to the work of Alves \cite{alves2002existence}.

	Results of this type have been used to shed light on existence and multiplicity of solutions, see for instance \cite{DevillanovaSolimini, ClappWeth, vetois2007, BarlettaCanditoMaranoPerera, clappRios}. In addition, we note that decompositions of this type have been established in other contexts. Examples include Hebey-Robert \cite{HebeyRobert} for Paneitz type operators, Saintier \cite{Saintier} for \(p\)-Laplace operators on compact manifolds, El-Hamidi-V\'etois \cite{Vetois3} for anisotropic operators, Almaraz \cite{Almaraz} for non-linear boundary conditions and Mazumdar \cite{mazumdar} for polyharmonic operators on compact manifolds.
	
	It should also be noted that, in 1985, Lions \cite{Lions1, Lions2} published works relating to concentration-compactness phenomena in the calculus of variations. In particular, in a spirit similar to Struwe's result, he established a decomposition with convergence taking place at the level of measures.
	
	We also point out that the stability of the CKN-inequality, which closely relates to quantitative versions of Struwe decompositions, has been recently studied in Ciraolo-Figalli-Maggi \cite{ciraolo2018quantitative}, Figalli-Glaudo \cite{figalli2020sharp}, Deng-Sun-Wei \cite{deng2021sharp}, and Wei-Wu \cite{weiWu}.

	\subsubsection{Classifications} Our understanding of the loss of compactness phenomenon presented in Theorems \ref{thm:main a<b}, \ref{thm:main a=b} may be improved by characterizing solutions to problems \eqref{eq:probLim}, \eqref{eq:probLimWeightless} and \eqref{eq:probLimWeightlessHalf}. As such, we provide some historical context and recent results, particularly with regards to classifications of solutions to problem \eqref{eq:probLim} and its unweighted analogue \eqref{eq:probLimWeightless}.
	
	Let us first remark that problems \eqref{eq:probLim} and \eqref{eq:probLimWeightless} are intimately related to the  Caffarelli-Kohn-Nirenberg (CKN) inequality on $\RR^n$. Proven by Caffarelli-Kohn-Nirenberg in \cite{CKN}, the CKN-inequality states that there is a best (i.e. minimal) constant $C>0$ such that, for any $x_0\in\RR^n$, there holds
	\begin{equation}\label{eq:CKN}
		\left( \int_{\RR^n} \abs{u(x)}^q\abs{x-x_0}^{-bq}\d{x} \right)^{1/q} \leq C\left( \int_{\RR^n} \abs{\nabla u(x)}^p\abs{x-x_0}^{-ap}\d{x} \right)^{1/p}
	\end{equation}
	for all $u\in C_c^\infty(\RR^n)$. When $a=b=0$, this reduces to the Gagliardo-Nirenberg-Sobolev inequality and the best constant is known by works of Aubin \cite{aubin} and Talenti \cite{talenti1976best}, though computations of this constant date as far back as Rodemich's seminar \cite{rodemich}. 
	
	By standard techniques in the calculus of variations, a suitable constant multiple of functions achieving equality in \eqref{eq:CKN} solve problem \eqref{eq:probLim}. In fact, early classification were for minimizers of the ratio
	\begin{equation}\label{eq:CKNratio}
		\dfrac{\left( \int_{\RR^n} \abs{\nabla u(x)}^p\abs{x}^{-ap}\d{x} \right)^{1/p}}{\left( \int_{\RR^n} \abs{u(x)}^q\abs{x}^{-bq}\d{x} \right)^{1/q}},\qquad u\in \D_a^{1,p}(\RR^n, 0).
	\end{equation}
	In the unweighted case $a=b=0$, Talenti \cite{talenti1976best} exhibited a family of (radially symmetric) functions minimizing \eqref{eq:CKNratio}. For $p=2$ and $n\ge 3$, the existence of radially symmetric functions minimizing \eqref{eq:CKNratio} was determined by Lieb \cite{lieb1983} when $a=0$. In addition, an explicit form for said best constant was estblished therein. Furthermore, for $p=2$, Chou and Chu \cite{chou-chu} determined the best constant in \eqref{eq:CKN} when $a\ge 0$ and categorizes the functions minimizing \eqref{eq:CKNratio} when $a>0$. Considering instead $a<0$, Horiuchi \cite{Horiuchi1997} showed that, when $p=2, n\ge 3$ and $a=b < 0$, the best constant in \eqref{eq:CKN} is precisely the Sobolev constant and equality cannot be achieved by any function in $\D_a^{1,p}(\RR^n, x_0)$. In this same paper, Horiuchi also determined the best constant in \eqref{eq:CKN} for $1<p<{n}/({1+a-b})$, provided $a\ge 0$ and $b < n/q$.
	
	In recent years, positive solutions to problem \eqref{eq:probLimWeightless} have been fully classified and have the form
	\begin{equation}\label{eq:limitingSolutionsUnweighted}
		x \mapsto \left[\frac{\left(\mu n \left(\frac{n-p}{p-1}\right)^{p-1}\right)^{1/p}}{\mu + \abs{x-x_0}^{\frac{p}{p-1}}}\right]^{\frac{p}{q-p}},\qquad \mu>0, x_0\in\RR^n.
	\end{equation}
	This classification was obtained in the case $p=2$ by Caffarelli, Gidas and Spruck \cite{CaffarelliGidasSpruck}; they showed, via the method of moving planes, that all positive solutions to \eqref{eq:probLimWeightless} are radial and thus determined an explicit form for said solutions. Furthermore, Damascelli, Merch\'an, Montoro and Sciunzi \cite{damascellimerchanmontorosciunzi} utilized a previous classification due to Guedda and Veron \cite{guedda} to extend this result to the range $\frac{2n}{n+2}\le p < 2$. Similarly, V\'etois \cite{vetois2016} improved the range to $1<p<2$ by exploiting a decay result of Damascelli-Ramaswamy \cite{damascelliRamaswamy}. Finally, the case $2<p<n$ was handled by Sciunzi \cite{SCIUNZI201612} using an a-priori estimate of V\'etois \cite{vetois2016}. We mention that, for ranges of $p$ depending on $n$, this classification is also known for solutions that are \emph{not} assumed to have finite energy (i.e. $u$ need not belong to $\D^{1,p}(\RR^n)$ a priori), see Catino, Monticelli and Roncoroni \cite{catinomonticellironcorini}, Ou \cite{Ou2022} and V\'etois \cite{vetois2023} for more detail. 
	
	Similar classification results are known for the weighted analog of \eqref{eq:probLimWeightless}, i.e. problem \eqref{eq:probLim}. Indeed, in some cases, positive solutions to problem \eqref{eq:probLim} are known to be of the form
	\begin{equation}\label{eq:limitingSolutionsWeighted}
		x \mapsto \left[\frac{\left(\dfrac{\mu n (n-p(1+a))^p}{(n-p(1+a-b))(p-1)^{p-1}}\right)^{1/p}}{\mu + c\abs{x}^{\frac{(n-p(1+a))}{(n-p(1+a-b))}\frac{p(1+a-b)}{p-1}}}\right]^{\frac{p}{q-p}},\qquad \mu>0
	\end{equation}
	We note that, if $a=b=0$, the weighted measures $\abs{x}^{-ap}\d{x}$ and $\abs{x}^{-bq}\d{x}$ are translation invariant; in particular the functions in \eqref{eq:limitingSolutionsWeighted} cover all possible solutions of \eqref{eq:probLim}, up to translation. Put otherwise, in the case $a=b=0$ there is no distinction between problems \eqref{eq:probLim} and \eqref{eq:probLimWeightless} so all positive solutions are given by equation \eqref{eq:limitingSolutionsUnweighted}; we omit this case from the reminder of our discussion. When $a=0$, generalizing a result of Guedda-Veron \cite{guedda} to allow for $b\ne 0$, Ghoussoub and Yuan \cite{ghoussoub-yuan} showed that every positive radial solutions of \eqref{eq:probLim} is of the form  \eqref{eq:limitingSolutionsWeighted}. Considering only $p=2$, Dolbeault and Esteban \cite{DolbeaultEsteban} show that all positive (not a-priori assumed radial) solutions in the non-limiting case $a<b$ are as in \eqref{eq:probLim}, provided either 
	\begin{itemize}
		\item $a\ge 0$
		\item $a<0$ and $b \le \gamma\left(\frac{n}{p\sqrt{\gamma^2+n-1}} - 1\right)$.
	\end{itemize}
	Moreover, for dimensions $n\ge 3$, Ciraolo and Corso \cite{ciraolo-corso}, utilizing a-priori asymptotic estimates of V\'etois-Shakerian \cite{shakerian},  demonstrated that this classification extends to the whole range $1<p<n$ in the limit case $a=b \ge 0$ as well as a wider range of possible values for $a,b$ provided $p<n/2$. Building upon the result of Ciraolo-Corso \cite{ciraolo-corso} and again utilizing estimates from V\'etois-Shakerian \cite{shakerian}, Le and Le \cite{LeLe} showed that, when $a=0$, all positive solutions of \eqref{eq:probLim} are radial and therefore classified by Ghoussoub-Yuan \cite{ghoussoub-yuan}. 
	
	We remark that all classification results (for positive solutions) discussed thus far yield radial solutions. However, certain minimizers of \eqref{eq:CKN} are known to break symmetry for some values of $a,b,p$; see \cite{catrinawang, byeonwang, fellischneider, smetswillem, DolbeaultEstebanLossTarantello, ChibaHoriuchi} for details.
	
	\subsection{Paper structure} We devote the remainder of this paper to the establishment of Theorems \ref{thm:main a<b}-\ref{thm:main a=b}. Firstly, in \S2, we introduce some convenient notation and recall important properties of the homogenous weighted Sobolev space  $\D_a^{1,p}$ and Palais-Smale sequences that will be liberally invoked during the proof of our main results. Secondly, in \S3, we discuss the new transformation rules in greater detail and, in doing so, we provide explicit bounds for their operator norms. Following this, in \S4, we shift our attention to the limiting behaviour of rescaled domains and make precise in what sense domains may converge. Then, in \S5, we consider how the new transforms interact with energy functionals associated to problems \eqref{eq:probOm}--\eqref{eq:probLimWeightlessHalf}. Leveraging our work from the previous sections, we provide results that will be central to the proofs of our main theorems in \S6. Specifically, we show how one can extract a solution to the problem in $\Om$ from a given $(PS)$-sequence, which constitutes the first step of each main proof. In addition, still in \S6, we demonstrate that, when \(a=b\), the new transforms gives rise to new ``bubble'' terms derived from solutions to three distinct limiting problems, and how one may recover a $(PS)$-sequence for the original problem. Finally, in \S7-\S8 using these aforementioned lemmas, we provide the proofs of Theorems \ref{thm:main a<b}-\ref{thm:main a=b} by way of an iterative step-by-step procedure. 
	
	\subsection*{Acknowledgments} I would like to thank my supervisors, Professor J\'{e}r\^{o}me V\'{e}tois and Pengfei Guan, whose time and support were indispensable to the completion of this work. I would also like to thank Dana Berman for her valuable suggestions.
	
	\section{Homogeneous Sobolev Spaces and Palais-Smale Sequences} 
	
	In this section, we introduce notation involving weighted homogeneous Sobolev spaces and provide results describing properties of the functions living therein. Recall that, for any open set $U$ and $x_0 \in \RR^n$, we let $\D_a^{1,p}(U, x_0)$ be the completion of $C_c^\infty(U)$ under the norm \eqref{xnorm}.  In the same vein, we introduce the following two weighted norms	\[
	\norm{u}_{L^q_b(U,x_0)}^q :=\int_U \abs{u}^q\abs{x-x_0}^{-bq}\d{x}, 
	\quad
	\norm{v}_{L^p_a(U,x_0)}^p := \int_U \abs{v}^p\abs{x-x_0}^{-ap}\d{x}.
	\]
	The above expressions induce norms on the weighted Lebesgue spaces $L^q_b(U, x_0)$ and $L^p_a(U,x_0)$, consisting (respectively) of those (Lebsegue measurable) $u, v : U \to \RR$ such that
	\[
	\int_U \abs{u}^q\abs{x-x_0}^{-bq}\d{x} < \infty, \quad \text{and}\quad   \int_U \abs{v}^p\abs{x-x_0}^{-ap}\d{x} < \infty.
	\]
	In this notation, an equivalent formulation of the CKN-inequality \eqref{eq:CKN} is thus given by
	\begin{equation}\label{eq:CKNv2}
		C_{a,b}\norm{w}_{L^q_b(\RR^n, x_0)}^{p} \leq \norm{\nabla w}_{L^p_a(\RR^n, x_0)}^p, \quad \forall w \in \D_a^{1,p}(\RR^n, x_0)
	\end{equation}
	where $C_{a,b} = C_{a,b}(n,p)$ denotes the sharp constant that will henceforth be called a CKN-constant.
	
	\begin{rem}
		As mentioned in \S1, when $a=b=0$, we shall omit the presence of weights and only write $\D^{1,p}(U)$. In this case, with $U = \RR^n$, available literature includes, for instance, Willem \cite[\S 7.2]{willemFunctional} and this homogeneous Sobolev space is precisely
		\[
		\D^{1,p}(\RR^n) = \left\{ u \in L^{p^\ast}(\RR^n) : \nabla u \in L^p(\RR^n) \right\}.
		\]
	\end{rem}
	
	For the purposes of this paper, we shall require results from Sobolev theory in the context of the weighted space $\D_a^{1,p}(U,x_0)$, which we recall below. The reader may consult Chernysh \cite[Chapter 2]{mscChernysh} for more details on these results. Descriptively, every function $u \in \D_a^{1,p}(U, x_0)$ belongs to $L^q_b(U,x_0)$, whereas $\nabla u$ exists in the weak sense on $U \setminus \{ x_0\}$ such that $\nabla u \in L^p_a(U \setminus \{x_0\}, x_0)$. Moreover, when $a \ge 0$, we can assert that $\nabla u$ exists in the weak sense on the \emph{whole} of $U$. We further note that weak gradients vanish on level sets. That is, for every \(c\in\RR\),
	\[
	\nabla u \equiv 0 \text{ almost everywhere on } \set{x\in \RR^n : u(x) = c}.
	\]
	Following this, we also recall that, under condition \eqref{eq:conditions}, $\D_a^{1,p}(U, x_0)$ is always a reflexive Banach space. In fact, every bounded sequence in $\D_a^{1,p}(U, x_0)$ admits a subsequence converging weakly and almost everywhere on the set $U$. We also remark that several key results for traditional Sobolev spaces have known analogues in $\D_a^{1,p}(U, x_0)$, e.g. the Rellich-Kondrachov embedding theorem. In particular, the natural embedding \(\D_a^{1,p}(U, x_0) \hookrightarrow L_{\text{loc}}^p\big(\RR^n, \abs{x-x_0}^{-ap}\big)\) is compact.
	
	Denoting by $\D^{-1,p^\prime}_a(U, x_0)$, with $p^\prime := p/(p-1)$, the topological dual of $\D_a^{1,p}(U, x_0)$, a Riesz-type representation theorem asserts that every functional $\varphi \in \D_a^{-1,p^\prime}(U,x_0)$ takes the form
	\begin{equation}\label{eq:Riesz}
		\varphi(u) = \int_U \pinner{\nabla u, \mathbf{g}} \abs{x-x_0}^{-ap}\d{x}, \qquad \forall u \in \D_a^{1,p}(U, x_0)
	\end{equation}
	for some $\RR^n$-valued function $\mathbf{g} \in L^{p^\prime}_a(\RR^n, x_0)$. Here, \(\pinner{\cdot, \ast}\) denotes the usual inner-product on \(\RR^n\). Finally, for simplicity, the topological dual of $\D^{1,p}(U)$ is simply denoted $\D^{-1,p^\prime}(U)$.
	
	\subsection{Pointwise Convergence of the Gradients}
	
	We have noted that bounded sequences in $\D^{1,p}_a(U,x_0)$ have subsequences that converge weakly and almost everywhere on $U$. In this subsection, we give conditions under which we can find a subsequence whose \emph{gradients} converge pointwise.  More specifically, we establish a weighted version of Theorem 3.3 in Mercuri-Willem \cite{mercuri-willem}.

	Let us first define  the Lipschitz continuous function  $T : \RR \to \RR$ by the rule
	\begin{equation}\label{eq:TDef}
		T(x) := \begin{cases}
			x & \text{if } \abs{x} \leq 1,\\
			\frac{x}{\abs{x}} & \text{if } \abs{x} > 1.
		\end{cases}
	\end{equation}
	By considering suitable mollifications of \(T\), it is straightforward to see that, given $u \in \D^{1,p}_a(U,x_0)$, $T(u) \in \D^{1,p}_a(U,x_0)$.
	
	\begin{prop}\label{prop:gradientConvergence}
		Let $(u_\alpha)$ be a bounded sequence in $\D^{1,p}_a(U,x_0)$ converging pointwise almost everywhere to $u \in \D^{1,p}_a(U,x_0)$ as $\alpha \to \infty$. Let $T$ be given by \eqref{eq:TDef} and assume that $(U_k)$ is an increasing sequence of open subsets of $U$ such that $\bigcup_{k \geq 1} U_k = U$. Assume further that
		\begin{equation}\label{eq:lemGradient1}
			\lim_{\alpha \to \infty} \int_{U_k} \pinner{\left[ \abs{\nabla u_\alpha}^{p-2}\nabla u_\alpha - \abs{\nabla u}^{p-2}\nabla u\right], \nabla \left[T(u_\alpha - u)\right]}\,\d{\mu} = 0
		\end{equation}
		for each $k \geq 1$, where $\d{\mu} = \abs{x-x_0}^{-ap}\d{x}$. Then, there exists a subsequence $(u_\beta)$ such that  $\nabla u_\beta \to \nabla u$ almost everywhere on $U$.
	\end{prop}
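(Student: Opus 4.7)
\medskip

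\noindent\textbf{Proof plan.} The strategy is the classical Boccardo–Murat-type argument, adapted to the weighted measure $\d\mu = \abs{x-x_0}^{-ap}\d{x}$, combined with a diagonal extraction over the exhausting family $(U_k)$. Since the weight is locally bounded above and below away from $x_0$, and $\{x_0\}$ is a Lebesgue-null set, ``almost everywhere'' with respect to $\d\mu$ and with respect to Lebesgue measure coincide on $U$; we will use this freely.

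First, I would unpack the integrand in \eqref{eq:lemGradient1}. Since $T$ is Lipschitz with $T^\prime=1$ on $(-1,1)$ and $T^\prime=0$ on $\{|s|>1\}$, a chain-rule / mollification argument (together with the fact that weak gradients vanish on level sets) gives
\[
\nabla\bigl[T(u_\alpha - u)\bigr] = \mathbf{1}_{\{\abs{u_\alpha - u}<1\}}\bigl(\nabla u_\alpha - \nabla u\bigr)\qquad\text{a.e. on } U.
\]
Hence the integrand in \eqref{eq:lemGradient1} equals
\[
E_\alpha(x) := \mathbf{1}_{\{\abs{u_\alpha - u}<1\}}\,\pinner{\abs{\nabla u_\alpha}^{p-2}\nabla u_\alpha - \abs{\nabla u}^{p-2}\nabla u,\; \nabla u_\alpha - \nabla u},
\]
which is non-negative almost everywhere on $U$ by the strict monotonicity of the vector field $\xi\mapsto\abs{\xi}^{p-2}\xi$ on $\RR^n$.

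Second, I would fix $k\ge 1$. Since $E_\alpha\ge 0$ on $U_k$ and $\int_{U_k} E_\alpha\,\d\mu\to 0$, a standard measure-theoretic extraction yields a subsequence (depending on $k$) such that $E_\alpha\to 0$ a.e. on $U_k$. Combined with the hypothesis $u_\alpha\to u$ a.e., for a.e. $x\in U_k$ the indicator $\mathbf{1}_{\{\abs{u_\alpha(x)-u(x)}<1\}}$ equals $1$ for all sufficiently large $\alpha$, and so
\[
\pinner{\abs{\nabla u_\alpha(x)}^{p-2}\nabla u_\alpha(x) - \abs{\nabla u(x)}^{p-2}\nabla u(x),\; \nabla u_\alpha(x) - \nabla u(x)} \longrightarrow 0
\]
for a.e. $x\in U_k$. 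Using boundedness of $(\nabla u_\alpha)$ in $L^p_a(U,x_0)$ together with the elementary fact that $\pinner{\abs{\xi}^{p-2}\xi - \abs{\eta}^{p-2}\eta, \xi-\eta} = 0$ forces $\xi=\eta$ (quantified, e.g., by the Simon/Lindqvist inequalities appropriate for $p\ge 2$ and $1<p<2$, applied pointwise to $\xi=\nabla u_\alpha(x)$, $\eta=\nabla u(x)$), I would conclude that along this subsequence $\nabla u_\alpha(x)\to\nabla u(x)$ for a.e. $x\in U_k$.

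Finally, a Cantor diagonal extraction across $k$ produces a single subsequence $(u_\beta)$ with $\nabla u_\beta\to\nabla u$ a.e. on every $U_k$, hence a.e. on $U=\bigcup_k U_k$, which is the desired conclusion. The main technical subtlety—where I expect the only real work—is justifying the chain rule identity for $\nabla[T(u_\alpha - u)]$ in the weighted setting and confirming that the pointwise vector-field monotonicity genuinely implies $\nabla u_\alpha\to\nabla u$ (rather than merely convergence of the monotone expression); both are handled by mollification plus standard pointwise inequalities, using boundedness of $(\nabla u_\alpha)$ in $L^p_a$ to rule out pathological behaviour at infinite values.
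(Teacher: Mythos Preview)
Your proposal is correct and follows essentially the same route as the paper's proof: both rewrite the integrand via the chain rule for $T$ as a non-negative monotone expression restricted to the set $\{\abs{u_\alpha-u}\le 1\}$, extract a subsequence on each $U_k$ so that this expression tends to zero almost everywhere, use $u_\alpha\to u$ a.e.\ to dispose of the indicator, invoke pointwise strict monotonicity of $\xi\mapsto\abs{\xi}^{p-2}\xi$ (the paper cites Szulkin--Willem, you cite Simon/Lindqvist---same content) to conclude $\nabla u_\alpha\to\nabla u$ a.e.\ on $U_k$, and finish with a diagonal argument.
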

	\begin{proof}

		We adapt an argument from Szulkin-Willem \cite{Szulkin-Willem}, to show that, given \(k\in \NN\), up to a subsequence, \(\nabla u_\alpha \to\nabla u\) almost everywhere on \(U_k\).

		Fix \(k\in\NN\) and, for each $\alpha \in \NN$, consider the set
		\[
		E^{(k)}_\alpha := \left\{ x \in U_k  : \abs{u_\alpha(x) - u(x)} \leq 1  \right\}.
		\]
		Then, \eqref{eq:lemGradient1} can be written as
		\begin{equation}\label{eq:lemgradient2}
			\lim_{\alpha \to \infty} \int_{E^{(k)}_\alpha} \pinner{\left[\abs{\nabla u_\alpha}^{p-2}\nabla u_\alpha - \abs{\nabla u}^{p-2}\nabla u\right], [\nabla u_\alpha  - \nabla u]}\abs{x-x_0}^{-ap}\d{x} = 0.
		\end{equation}
		Since the integrand is non-negative and $\mathbf{1}_{E^{(k)}_\alpha}(x) \to 1$ for almost every $x \in U_k$, passing to a subsequence if necessary, we see that
		\begin{align*}
			\pinner{\left[\abs{\nabla u_\alpha}^{p-2}\nabla u_\alpha - \abs{\nabla u}^{p-2}\nabla u\right], [\nabla u_\alpha  - \nabla u]} \to 0
		\end{align*}
		pointwise almost everywhere on $U_k$. Appealing classical results (see, for instance, Szulkin-Willem \cite[Lemma 2.1]{Szulkin-Willem}), it follows that $\nabla u_\alpha \to \nabla u$ pointwise a.e. on $U_k$. Finally, a diagonal argument completes the proof.
	\end{proof}
	
	\begin{cor}\label{cor:gradientConvergence}
		Let $(u_\alpha)$ be a bounded sequence in $\D^{1,p}_a(U,x_0)$ converging pointwise almost everywhere to $u \in \D^{1,p}_a(U,x_0)$ as $\alpha \to \infty$. Under the assumptions of Proposition \ref{prop:gradientConvergence}, there exists a subsequence $(u_\beta)$ with the following properties:
		\begin{enumerate}[label=(\arabic*)]
			\item $\nabla u_\beta \to \nabla u$ pointwise almost everywhere on $U$;
			\item $\lim\limits_{\beta \to \infty} \left( \norm{u_\beta}^p_{\D^{1,p}_a(U,x_0)} - \norm{u_\beta - u}^p_{\D^{1,p}_a(U,x_0)} \right) = \norm{u}^p_{\D^{1,p}_a(U,x_0)}$;	
			\item and $$\abs{\nabla u_\beta}^{p-2}\nabla u_\beta - \abs{\nabla u_\beta - \nabla u}^{p-2}(\nabla u_\beta - \nabla u) \to \abs{\nabla u}^{p-2}\nabla u$$ strongly in $L^{\frac{p}{p-1}}(U, \abs{x-x_0}^{-ap})$
			\item and $$\abs{u_\beta}^{q-2} u_\beta - \abs{u_\beta - u}^{q-2}(u_\beta - u) \to \abs{u}^{q-2}u$$ strongly in $L^{\frac{q}{q-1}}(U, \abs{x-x_0}^{-bq})$.
		\end{enumerate}
	\end{cor}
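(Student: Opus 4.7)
The plan is to invoke Proposition~\ref{prop:gradientConvergence} to secure pointwise convergence of the gradients along a subsequence, and then to upgrade this to Brezis--Lieb-type identities in the appropriate weighted Lebesgue spaces. Applied directly, Proposition~\ref{prop:gradientConvergence} produces a subsequence $(u_\beta)$ with $\nabla u_\beta \to \nabla u$ almost everywhere on $U$, giving item (1). All remaining items will be established for this same $(u_\beta)$ with no further extraction.

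For item (2), I would apply the classical scalar Brezis--Lieb lemma to the sequence $\abs{\nabla u_\beta}$ in $L^p$ of the $\sigma$-finite measure space $(U, \d\mu)$ with $\d\mu = \abs{x-x_0}^{-ap}\d{x}$; the sequence is bounded by hypothesis and converges pointwise a.e.~to $\abs{\nabla u}$ by (1). Item (3) is the standard vector-valued refinement of Brezis--Lieb: for any bounded sequence $(\mathbf{F}_\beta)$ in $L^p(U,\d\mu;\RR^n)$ converging a.e.~to $\mathbf{F}$, one has
\[
\abs{\mathbf{F}_\beta}^{p-2}\mathbf{F}_\beta - \abs{\mathbf{F}_\beta - \mathbf{F}}^{p-2}(\mathbf{F}_\beta - \mathbf{F}) \longrightarrow \abs{\mathbf{F}}^{p-2}\mathbf{F}\quad\text{in } L^{p/(p-1)}(U,\d\mu;\RR^n),
\]
whose proof combines Vitali's convergence theorem with an elementary inequality of the form $\abs{\abs{a+b}^{p-2}(a+b)-\abs{a}^{p-2}a}^{p/(p-1)} \le \epsilon\abs{a}^{p} + C_\epsilon\abs{b}^{p}$. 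Taking $\mathbf{F}_\beta = \nabla u_\beta$ and $\mathbf{F} = \nabla u$ yields (3).

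Item (4) is proved by the very same vector-valued Brezis--Lieb statement, but in the weighted space $(U, \abs{x-x_0}^{-bq}\d{x})$ and with exponent $r = q$. Boundedness of $(u_\beta)$ in $L^q_b(U,x_0)$ is supplied by the CKN inequality \eqref{eq:CKNv2} applied to the (zero-extended) subsequence in $\D^{1,p}_a(U,x_0)$, while pointwise a.e.~convergence $u_\beta \to u$ is built into the hypothesis. The only subtle point is that Brezis--Lieb is being invoked on a weighted rather than Lebesgue measure space; but its standard proof uses only Fatou's lemma and the pointwise elementary inequality above, neither of which refers to translation invariance, so no genuine obstacle arises. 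The corollary is thus essentially bookkeeping once Proposition~\ref{prop:gradientConvergence} is in hand.
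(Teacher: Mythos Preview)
Your proposal is correct and follows essentially the same route as the paper: item (1) is Proposition~\ref{prop:gradientConvergence}, item (2) is the scalar Br\'ezis--Lieb lemma, and items (3)--(4) are the vector-valued refinement, which the paper delegates to citations (Alves \cite[Lemma~3]{alves2002existence} and Mercuri--Willem \cite[Lemma~3.2]{mercuri-willem}) while you spell out the underlying mechanism (the $\epsilon$-inequality plus Vitali). Your remark that the weighted measure causes no obstruction, and that CKN supplies the $L^q_b$ bound needed for (4), are exactly the points one must check when adapting those references.
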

	\begin{proof}
		The first item is the conclusion of Proposition \ref{prop:gradientConvergence} whence (2) is a direct consequence of the Br\'ezis-Lieb Lemma. Finally, (3) and (4) follows from straightforward adaptations of Alves \cite[Lemma 3]{alves2002existence} and Mercuri-Willem \cite[Lemma 3.2]{mercuri-willem}.
	\end{proof}
	
	\subsection{Palais-Smale Sequences and Energies of Critical Points}
	
	\begin{prop}\label{prop:psBounded}
		A Palais-Smale sequence $(u_\alpha)$ for \eqref{eq:probOm} is bounded.
	\end{prop}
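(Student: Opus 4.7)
The approach is the classical Ambrosetti--Rabinowitz-type combination, exploiting the superlinearity $q>p$ to eliminate the nonlinear term. A direct computation of the Fr\'echet derivative yields
\[
\inner{\phi^\prime(u_\alpha), u_\alpha} = p\int_\Om \abs{\nabla u_\alpha}^p\abs{x}^{-ap}\d{x} - q\int_\Om \abs{u_\alpha}^q\abs{x}^{-bq}\d{x},
\]
while by definition
\[
\phi(u_\alpha) = \int_\Om \abs{\nabla u_\alpha}^p\abs{x}^{-ap}\d{x} - \int_\Om \abs{u_\alpha}^q\abs{x}^{-bq}\d{x}.
\]
Multiplying the second identity by $q$ and subtracting the first cancels the $L^q_b$-term, leaving
\[
q\phi(u_\alpha) - \inner{\phi^\prime(u_\alpha), u_\alpha} = (q-p)\norm{u_\alpha}^p_{\D_a^{1,p}(\Om,0)}.
\]

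Next, I would invoke the Palais-Smale hypotheses on the left-hand side. Boundedness of $\phi(u_\alpha)$ provides a uniform constant $C>0$ with $\abs{\phi(u_\alpha)} \le C$, while the strong convergence $\phi^\prime(u_\alpha) \to 0$ in $\D_a^{-1,p^\prime}(\Om,0)$ furnishes a sequence $\epsilon_\alpha \to 0^+$ such that
\[
\abs{\inner{\phi^\prime(u_\alpha), u_\alpha}} \le \epsilon_\alpha \norm{u_\alpha}_{\D_a^{1,p}(\Om,0)}.
\]
Inserting these estimates into the identity above gives
\[
(q-p)\norm{u_\alpha}^p_{\D_a^{1,p}(\Om,0)} \le qC + \epsilon_\alpha \norm{u_\alpha}_{\D_a^{1,p}(\Om,0)}.
\]

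Since $q>p>1$ by the hypotheses on the CKN exponent, boundedness of $(u_\alpha)$ is immediate: were $\norm{u_\alpha}_{\D_a^{1,p}(\Om,0)}$ unbounded, dividing through by $\norm{u_\alpha}^p_{\D_a^{1,p}(\Om,0)}$ along an unbounded subsequence and passing to the limit would force $q-p \le 0$, contradicting $q>p$. There is no substantial obstacle here; the only bookkeeping point is that the paper's $\phi$ is un-normalized (no $1/p$ or $1/q$ factors), which simply produces the constants $p$ and $q$ above but does not affect the argument.
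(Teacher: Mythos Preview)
Your argument is correct and is essentially the same Ambrosetti--Rabinowitz combination the paper uses: both proofs form a linear combination of $\phi(u_\alpha)$ and $\inner{\phi^\prime(u_\alpha),u_\alpha}$ to eliminate one integral and bound $\norm{u_\alpha}_{\D_a^{1,p}(\Om,0)}^p$ by $C + o(1)\norm{u_\alpha}_{\D_a^{1,p}(\Om,0)}$. Your version isolates the gradient term directly in a single step, whereas the paper first bounds the $L^q_b$-term and substitutes back, but the content is identical.
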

	\begin{proof}
		We follow the argument used in Struwe \cite[Lemma 2.3]{struwe1984global}.
		Let $(u_\alpha)$ be a Palais-Smale sequence for \eqref{eq:probOm}. A simple calculation shows that, for some \(C>0\),
		\begin{align*}
			\norm{u_\alpha}_{\D^{1,p}_a(\Om, 0)}^p &= p\phi(u_\alpha) + \frac{p}{q} \int_\Om \frac{\abs{u_\alpha}^q}{\abs{x}^{bq}}\d{x}
			\leq C +\frac{p}{q} \int_\Om \frac{\abs{u_\alpha}^q}{\abs{x}^{bq}}\d{x}
		\end{align*}
		where we have used that $\phi(u_\alpha)$ is bounded in $\alpha$. Next, observe that
		\[
		\left(1-\frac{p}{q}\right)\int_\Om \frac{\abs{u_\alpha}^q}{\abs{x}^{bq}}\d{x}
		= p\phi(u_\alpha) - \inner{\phi^\prime(u_\alpha), u_\alpha}
		\le C + o(1)\norm{u_\alpha}_{\D^{1,p}_a(\Om, 0)} 
		\]	
		It then follows that, for a suitable \(\tilde{C}>0\),
		\[
		\norm{u_\alpha}_{\D^{1,p}_a(\Om, 0)}^p 
		\leq C +\frac{p}{q} \int_\Om \frac{\abs{u_\alpha}^q}{\abs{x}^{bq}}\d{x}\\	
		\leq \tilde{C} + o(1)\norm{u_\alpha}_{\D^{1,p}_a(\Om, 0)}
		\]
		whence $(u_\alpha)$ is bounded in $\D^{1,p}_a(\Om, 0)$. 
	\end{proof}
	
	Using this, we show that (in the limit) Palais-Smale sequences for $\phi$ are uniformly bounded from below by zero in energy. More precisely, we have the following:
	
	\begin{lem}\label{lem:energy}
		Let $(u_\alpha)$ be a Palais-Smale sequence for \eqref{eq:probOm}. Then, $(u_\alpha)$ has non-negative limiting energy. More precisely,
		\[
		\liminf_{\alpha \to \infty} \phi(u_\alpha) \ge 0.
		\]
	\end{lem}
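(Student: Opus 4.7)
The plan is a short algebraic manipulation leveraging the two identities already derived within the proof of Proposition \ref{prop:psBounded}, combined with the Palais-Smale hypothesis. From that earlier argument we may take as given
\[
\norm{u_\alpha}_{\D_a^{1,p}(\Om, 0)}^p = p\phi(u_\alpha) + \frac{p}{q}\int_\Om \abs{u_\alpha}^q\abs{x}^{-bq}\d{x}
\]
and
\[
\left(1-\frac{p}{q}\right)\int_\Om \abs{u_\alpha}^q\abs{x}^{-bq}\d{x} = p\phi(u_\alpha) - \inner{\phi^\prime(u_\alpha), u_\alpha}.
\]
So I would begin by invoking Proposition \ref{prop:psBounded} to conclude that $(u_\alpha)$ is bounded in $\D_a^{1,p}(\Om, 0)$ and combine this with the PS condition $\phi^\prime(u_\alpha)\to 0$ in the dual to get $\inner{\phi^\prime(u_\alpha), u_\alpha} = o(1)$ via the trivial estimate $\abs{\inner{\phi^\prime(u_\alpha), u_\alpha}} \le \norm{\phi^\prime(u_\alpha)}_{\D_a^{-1,p^\prime}(\Om, 0)}\norm{u_\alpha}_{\D_a^{1,p}(\Om, 0)}$.

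Using the second identity to solve for $\int_\Om \abs{u_\alpha}^q\abs{x}^{-bq}\d{x}$ in terms of $\phi(u_\alpha)$ modulo $o(1)$, and then substituting into the first identity, I would obtain a relation of the form
\[
\norm{u_\alpha}_{\D_a^{1,p}(\Om, 0)}^p = \frac{pq}{q-p}\phi(u_\alpha) + o(1).
\]
Because condition \eqref{eq:conditions} guarantees $p < q$, the coefficient $\frac{pq}{q-p}$ is strictly positive, and the left-hand side is non-negative. Rearranging gives $\phi(u_\alpha) \ge o(1)$, so passing to the $\liminf$ yields the desired bound.

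There is no real obstacle here; the statement is essentially an algebraic reformulation of the Palais-Smale identities in the subcritical regime $q > p$ for the growth exponent, in the spirit of a Nehari functional computation. The only care needed is in ensuring the pairing $\inner{\phi^\prime(u_\alpha), u_\alpha}$ is genuinely $o(1)$, which, as noted above, is immediate from the definition of the operator norm on $\D_a^{-1, p^\prime}(\Om, 0)$ together with the boundedness of $(u_\alpha)$ provided by Proposition \ref{prop:psBounded}.
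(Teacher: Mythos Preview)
Your proof is correct and essentially the same as the paper's: both use boundedness (Proposition \ref{prop:psBounded}) plus $\phi'(u_\alpha)\to 0$ to get $\inner{\phi'(u_\alpha), u_\alpha}=o(1)$, then algebraically reduce $\phi(u_\alpha)$ to a non-negative quantity plus $o(1)$. The only cosmetic difference is that the paper writes $\phi(u_\alpha) = \left(\tfrac{1}{p}-\tfrac{1}{q}\right)\norm{u_\alpha}_{L^q_b(\Om,0)}^q + o(1)$ whereas you end up with the equivalent expression $\left(\tfrac{1}{p}-\tfrac{1}{q}\right)\norm{u_\alpha}_{\D_a^{1,p}(\Om,0)}^p + o(1)$.
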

	\begin{proof}
		Since $(u_\alpha)$ is bounded and a Palais-Smale sequence, it is readily seen that
		\[
		\int_{\Om} \abs{\nabla u_\alpha}^p\abs{x}^{-ap}\d{x} - \int_{\Om} \abs{u_\alpha}^q\abs{x}^{-bq}\d{x} = \inner{\phi^\prime(u_\alpha), u_\alpha}= o(1)
		\]
		as $\alpha \to \infty$. Consequently, as $\alpha \to\infty$,
		\[
		\phi(u_\alpha) = \frac{\norm{\nabla u_\alpha}_{L^p_a(\Om, 0)}^p}{p} - \frac{ \norm{u_\alpha}_{L^q_b(\Om, 0)}^q}{q}
		= \left( \frac{1}{p} - \frac{1}{q}\right) \norm{u_\alpha}_{L^q_b(\Om, 0)}^q + o(1)
		\]
		Passing to the limit inferior, we conclude the proof.
	\end{proof}
	
	\begin{prop}\label{prop:energy}
		Let $C_{a,b} > 0$ be the CKN-constant, in particular \(C_{a,b}>0\) is such that \eqref{eq:CKNv2} holds. Let \(U\subseteq \RR^n\) be an open set and suppose $u \in \D^{1,p}_a(U,0)$ is a non-trivial critical point of $\phi_{0,\infty}$ in \(U\), i.e. \(\phi^\prime_{0, \infty}(u, h) = 0\) for all \(h\in \D_a^{1,p}(U, 0)\). Then,
		\[
		0<\left(C_{a,b}\right)^{\frac{n}{p(1+a-b)}}\left(\frac{1+a-b}{n}\right) \le \phi_{0,\infty}(u).
		\]
		In particular, 
		\begin{enumerate}
			\item The constant \(\left(C_{a,b}\right)^{\frac{n}{p(1+a-b)}}\left(\frac{1+a-b}{n}\right)\) is a lower bound for the energy of non-trivial solutions to problems \eqref{eq:probOm} and \eqref{eq:probLim}.
			\item If \(a=b\), then \(\left(C_{0,0}\right)^{\frac{n}{p}}\left(\frac{1}{n}\right)\) is a lower bound for the energy of non-trivial solutions to problems \eqref{eq:probLimWeightless} and \eqref{eq:probLimWeightlessHalf}.
		\end{enumerate}
	\end{prop}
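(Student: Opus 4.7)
The approach is the standard Pohozaev--Nehari-type energy identity combined with the sharp CKN inequality. Since \(u\) is a non-trivial critical point of \(\phi_{0,\infty}\) on \(\D_a^{1,p}(U,0)\), I would first test \(\inner{\phi'_{0,\infty}(u), u} = 0\) against \(u\) itself to obtain the Nehari-type identity
\[
\norm{\nabla u}_{L^p_a(U,0)}^p = \norm{u}_{L^q_b(U,0)}^q.
\]
Substituting this into the functional and invoking the elementary identity \(\frac{1}{p} - \frac{1}{q} = \frac{1+a-b}{n}\), which follows immediately from the definition \(q = np/(n-p(1+a-b))\), gives
\[
\phi_{0,\infty}(u) = \left(\frac{1}{p} - \frac{1}{q}\right)\norm{u}_{L^q_b(U,0)}^q = \frac{1+a-b}{n}\norm{u}_{L^q_b(U,0)}^q.
\]

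Next, I would apply the CKN inequality \eqref{eq:CKNv2} to \(u\); this is legitimate since the natural zero-extension map on \(C_c^\infty(U)\) extends to an isometric embedding \(\D_a^{1,p}(U,0) \hookrightarrow \D_a^{1,p}(\RR^n,0)\), so CKN is available for any element of \(\D_a^{1,p}(U,0)\). Combined with the Nehari identity, this yields \(C_{a,b}\norm{u}_{L^q_b}^p \le \norm{\nabla u}_{L^p_a}^p = \norm{u}_{L^q_b}^q\), and non-triviality of \(u\) permits division by \(\norm{u}_{L^q_b}^p > 0\) to give
\[
\norm{u}_{L^q_b(U,0)}^{q-p} \ge C_{a,b}, \qquad \text{i.e.,}\qquad \norm{u}_{L^q_b(U,0)}^q \ge C_{a,b}^{q/(q-p)} = C_{a,b}^{n/(p(1+a-b))},
\]
where the final exponent simplification is again a direct consequence of the formula for \(q\). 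Chaining this into the preceding display delivers the stated lower bound on \(\phi_{0,\infty}(u)\).

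Items (1) and (2) then follow by specialization. Solutions to \eqref{eq:probOm} and \eqref{eq:probLim} are non-trivial critical points of \(\phi_{0,\infty}\) with \(U = \Om\) and \(U = \RR^n\) respectively, so the main estimate applies directly. For (2), when \(a = b\) one has \(q = p^\ast\) and the unweighted functional \(\phi_\infty\) governing \eqref{eq:probLimWeightless} and \eqref{eq:probLimWeightlessHalf} coincides with \(\phi_{0,\infty}\) in the degenerate parameter setting \(a = b = 0\); applying the main inequality with this parameter choice and sharp constant \(C_{0,0}\) produces the bound \(C_{0,0}^{n/p}/n\) (taking \(U = \RR^n\) for \eqref{eq:probLimWeightless} and \(U = \HH\) for \eqref{eq:probLimWeightlessHalf}). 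The argument is essentially algebraic with no serious obstacles; the only step requiring any mild care is the zero-extension used to invoke CKN on a potentially irregular open set \(U\).
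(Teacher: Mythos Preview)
Your proof is correct and follows essentially the same approach as the paper: test the derivative against $u$ to get the Nehari identity, rewrite $\phi_{0,\infty}(u)$ via $\tfrac{1}{p}-\tfrac{1}{q}=\tfrac{1+a-b}{n}$, and then use the CKN inequality to bound $\norm{u}_{L^q_b}^q$ from below by $C_{a,b}^{q/(q-p)}$. The paper's argument is line-for-line the same, including the extension by zero to apply CKN on $\RR^n$.
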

	\begin{proof}
		Since $u \in \D^{1,p}_a(U, 0)$ is a critical point of $\phi_{0,\infty}$ in \(U\), after extending \(u\) by \(0\) outside of \(U\), we have
		\begin{equation}\label{eq:norms_equality}
			0 = \inner{\phi^\prime_{x_0,\infty}(u),u} = \int_{\RR^n} \abs{\nabla u}^p\abs{x+x_0}^{-ap}\d{x} - \int_{\RR^n} \abs{u}^q\abs{x+x_0}^{-bq}\d{x}.
		\end{equation}
		Therefore,
		\[
		\begin{aligned}
			\phi_{0,\infty}(u) 
			= \frac{1}{p}\norm{\nabla u}_{L^p_a(\RR^n, -x_0)}^p - \frac{1}{q}\norm{u}^{q}_{L^q_b(\RR^n, -x_0)} 
			&= \left(\frac{1}{p} - \frac{1}{q}\right)\norm{u}^{q}_{L^q_b(\RR^n, -x_0)}\\
			&= \left(\frac{1+a-b}{n}\right)\norm{u}^{q}_{L^q_b(\RR^n, -x_0)}
		\end{aligned}
		\]
		On the other hand, by \eqref{eq:CKNv2} and using again \eqref{eq:norms_equality},
		\[
		C_{a,b} \norm{u}_{L^q_b(\RR^n, -x_0)}^p \leq \norm{\nabla u}_{L^p_a(\RR^n, -x_0)}^p = \norm{u}^{q}_{L^q_b(\RR^n, -x_0)},
		\]
		whence
		\[
		0 < \left(C_{a,b}\right)^{q/(q-p)} \le \norm{u}^{q}_{L^q_b(\RR^n, -x_0)}.
		\]
		We conclude that,
		\[
		\begin{aligned}
			\phi_{0,\infty}(u) 
			= \left(\frac{1+a-b}{n}\right)\norm{u}^{q}_{L^q_b(\RR^n, -x_0)} 
			&\ge \left(C_{a,b}\right)^{q/(q-p)}\left(\frac{1+a-b}{n}\right)\\
			&= \left(C_{a,b}\right)^{\frac{n}{p(1+a-b)}}\left(\frac{1+a-b}{n}\right).
		\end{aligned}
		\]
	\end{proof}
	
	\section{Homogeneity and New Rescaling Transformations}
	Let $u \in \D^{1,p}_a(\RR^n,0)$ be given and fix an arbitrary point $x_0 \in \RR^n$ along with some scaling factor $\lambda > 0$. We consider the following rescaling of $u$:
	\begin{equation}\label{eq:rescale}
		v(x) := \lambda^{\gamma} u(\lambda(x+x_0)),
	\end{equation}
	where $\gamma > 0$ is the homogeneity exponent given by equation \eqref{eq:exponent}. With this rescaling, an easy calculation yields that
	\[
	\norm{v}_{L^q_b(\RR^n, -x_0)} = \norm{u}_{L^q_b(\RR^n, 0)} \quad \text{and} \quad 	\norm{\nabla v}_{L^p_a(\RR^n, -x_0)} = \norm{\nabla u}_{L^p_a(\RR^n, 0)}
	\]
	whence it follows that $v \in \D^{1,p}_a(\RR^n,-x_0)$. 
	
	\subsection{Modifying the Rescaling: A New Transform}
	Let $u \in \D_a^{1,p}(\RR^n, 0)$ and consider an arbitrary point $y \in \RR^n\setminus\set{0}$ along with some $\lambda > 0$. We also fix a smooth function $\eta \in C_c^\infty(\RR^n)$ satisfying \begin{equation}\label{eq:cutoffDef}
		\begin{cases}
			0 \le \eta \le 1\\
			\eta = 1 &\text{in }B(0, 1/2)\\
			\eta = 0 &\text{outside } B(0,1).
		\end{cases}
	\end{equation}
	Define the transform
	\[
	\tau_{y, \lambda} : \D_a^{1,p}(\RR^n,0) \to \D^{1,p}(\RR^n)
	\]
	via the rule
	\begin{equation}\label{eq:rescale_mod}
		\tau_{y, \lambda}u(x) = \begin{cases}
			\lambda^\gamma u(\lambda x + y) &\text{if } a = b = 0\\
			\displaystyle
			\left(\frac{\lambda}{\abs{y}}\right)^b \lambda^\gamma u\left(\lambda x + y\right)\eta\left(\frac{2\lambda x}{\abs{y}}\right) &\text{otherwise}
		\end{cases}
	\end{equation}
	
	We now examine how this transformation behaves with respect to operator bounds. 
	\begin{lem}\label{lem:rescale_mod_bound}
		There exists a constant $C>0$ such that, given a point $y \in \RR^n\setminus\set{0}$ along with some $\lambda > 0$, there holds
		\[
		\begin{aligned}
			\norm{\tau_{y,\lambda}u}_{L^q(\RR^n)} &\le C\norm{u}_{L^q_b(\RR^n,0)}\\
			\norm{\tau_{y,\lambda}u}_{\D^{1,p}(\RR^n)} &\le C\left(\frac{\lambda}{\abs{y}}\right)^{b-a}\norm{u}_{\D_a^{1,p}(\RR^n,0)}
		\end{aligned}
		\]
		for all $u\in \D_a^{1,p}(\RR^n,0)$.
	\end{lem}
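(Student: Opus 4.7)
If $a=b=0$, the transform reduces to a translation-rescaling and both inequalities are equalities with constant $1$, so the substance of the lemma lies in the case $(a,b)\ne(0,0)$ which I address below. The strategy is to perform the change of variables $z = \lambda x + y$. Under this substitution, the cutoff $\eta(2\lambda x/\abs{y})$ becomes $\eta(2(z-y)/\abs{y})$, supported in the closed ball $\overline{B(y,\abs{y}/2)}$, on which $\abs{z}/\abs{y}\in[1/2,3/2]$. Consequently, $\abs{z}^{-ap}\sim\abs{y}^{-ap}$ and $\abs{z}^{-bq}\sim\abs{y}^{-bq}$ on the support, and the factor $(\lambda/\abs{y})^b$ in the definition of $\tau_{y,\lambda}$ is calibrated exactly to absorb the resulting mismatch between weighted and unweighted integrals. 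The identities $\gamma q = n-bq$ and $\gamma p = n-p(1+a)$ from \eqref{eq:exponent} will then collapse the leftover $\lambda$-powers.

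For the $L^q$-bound, substituting $z = \lambda x + y$ produces a prefactor $(\lambda/\abs{y})^{bq}\lambda^{\gamma q - n}$ that, by $\gamma q = n-bq$, collapses to $\abs{y}^{-bq}$. Replacing $\abs{y}^{-bq}$ by a constant multiple of $\abs{z}^{-bq}$ on the support of $\eta$, the remaining integral is dominated by $\norm{u}_{L^q_b(\RR^n, 0)}^q$, as required.

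The gradient estimate is harder. The product rule writes $\nabla\tau_{y,\lambda}u$ as a sum of a ``derivative-on-$u$'' piece carrying $\lambda\,\nabla u(\lambda x + y)\eta(\cdot)$ and a ``derivative-on-$\eta$'' piece carrying $(2\lambda/\abs{y})\,u(\lambda x + y)\nabla\eta(\cdot)$. The first piece is handled exactly as in the $L^q$-case, now using $\gamma p + p - n = -ap$ in conjunction with $\abs{z}^{-ap}\sim\abs{y}^{-ap}$, to produce $C(\lambda/\abs{y})^{p(b-a)}\norm{u}_{\D^{1,p}_a(\RR^n, 0)}^p$ directly.

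The main obstacle is the derivative-on-$\eta$ piece, which after the change of variables is supported on the annulus $\abs{y}/4\le\abs{z-y}\le\abs{y}/2$ (of Lebesgue measure $\sim\abs{y}^n$) and features $\abs{u}^p$ rather than $\abs{\nabla u}^p$ or $\abs{u}^q$, so the gradient norm cannot be invoked directly. My plan is to bridge this gap in three steps: (i) apply H\"older's inequality with exponents $q/p$ and $q/(q-p)$ on the annulus, which yields $\int\abs{u}^p\,\d{z}\le C\abs{y}^{n(1-p/q)}\bigl(\int\abs{u}^q\,\d{z}\bigr)^{p/q}$; (ii) insert $\abs{z}^{bq}\abs{z}^{-bq}$ and invoke $\abs{z}\sim\abs{y}$ to replace $\abs{z}^{bq}$ by $\abs{y}^{bq}$, producing the weighted integral $\norm{u}_{L^q_b(\RR^n,0)}^q$; and (iii) apply the CKN-inequality \eqref{eq:CKNv2} to control this by $\norm{u}_{\D^{1,p}_a(\RR^n, 0)}^p$. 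The identity $n(1-p/q) = p(1+a-b)$ is precisely what makes the accumulated $\lambda$- and $\abs{y}$-powers collapse to $(\lambda/\abs{y})^{p(b-a)}$, matching the first piece. Summing the two estimates and taking $p$-th roots then yields the desired bound.
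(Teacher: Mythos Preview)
Your proposal is correct and follows essentially the same route as the paper: the change of variables $z=\lambda x+y$ combined with $\abs{z}\sim\abs{y}$ on the support of the cutoff handles the $L^q$-bound and the derivative-on-$u$ piece, while the derivative-on-$\eta$ piece is controlled by H\"older with exponents $q/p$ and $q/(q-p)=n/[p(1+a-b)]$ followed by the CKN-inequality, with the exponent identity $n(1-p/q)=p(1+a-b)$ collapsing the powers to $(\lambda/\abs{y})^{p(b-a)}$. The paper applies H\"older on the full ball $\{2\lambda\abs{x}\le\abs{y}\}$ rather than the annulus where $\nabla\eta$ lives, but this is an inessential difference.
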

	\begin{proof} The case $a=b=0$ follows directly from homogeneity. Otherwise, we begin with a simple observation. Suppose $x\in \RR^n$ is such that
		\[
		\abs{\frac{2\lambda x}{\abs{y}}} \le 1 \iff \abs{\lambda x} \le \frac12\abs{y}.
		\]
		By the triangle and reverse-triangle inequalities, we then have
		\[
		\frac{1}{2} \le \frac{\abs{\lambda x + y}}{\abs{y}} \le \frac32.
		\]
		Therefore, there exists a constant $C = C(ap, bq) > 0$ independent of $\lambda, y$ such that
		\begin{equation}\label{eq:ratio bound}
			\left(\frac{\abs{\lambda x + y}}{\abs{y}}\right)^{ap} \le C,\quad
			\left(\frac{\abs{\lambda x + y}}{\abs{y}}\right)^{bq} \le C
		\end{equation}
		for all $x\in \RR^n$ such that ${2\lambda \abs{x}}\le{\abs{y}}$. In particular, these last equations hold whenever ${2\lambda x}/{\abs{y}}$ is in the support of $\eta$.
			
		Given $u\in \D_a^{1,p}(\RR^n,0)$, we use inequality \eqref{eq:ratio bound} in order to bound the $L^q$ and $\D^{1,p}$ norms of $v := \tau_{y,\lambda}u$. Since $0\le\eta\le 1$, observe that
		\begin{align*}
			\norm{v}_{L^q(\RR^n)}^q
			= \int_{\RR^n}\abs{v}^q
			&\le \left(\frac{\lambda}{\abs{y}}\right)^{bq} \lambda^{\gamma q} \int_{2\lambda\abs{x}\le \abs{y}}\abs{u\left(\lambda x + y\right)}^q\d{x}\\
			&=\lambda^n\int_{2\lambda\abs{x}\le \abs{y}}\abs{u\left(\lambda x + y\right)}^q\abs{y}^{-bq}\d{x}\\
			&\le C\lambda^n\int_{2\lambda\abs{x}\le \abs{y}}\abs{u\left(\lambda x + y\right)}^q\abs{\lambda x + y}^{-bq}\d{x}\\
			&= C\int_{2\abs{z-y} \le \abs{y}}\abs{u\left(z\right)}^q\abs{z}^{-bq}\d{z}\\
			&\le C\norm{u}_{L^q_b(\RR^n,0)}^q
		\end{align*}		
		which establishes the first inequality. In order to bound the $\D^{1,p}$-norm of $v$, we will consider two separate terms obtained via the product rule:
		\begin{align*}
			\norm{v}_{\D^{1,p}(\RR^n)}^p
			&=\int_{\RR^n} \abs{\nabla v}^p\d{x}\\
			&\le 2^{p-1}\left(\frac{\lambda}{\abs{y}}\right)^{p(b-a)}\lambda^{n}\int_{\RR^n}\abs{\eta\left(\frac{2\lambda x}{\abs{y}}\right)\nabla u (\lambda x + y)}^p\abs{y}^{-ap}\d{x}\\
			&\quad + 2^{p-1+p} \left(\frac{\lambda}{\abs{y}}\right)^p\lambda^{np/q}\int_{\RR^n}\abs{u (\lambda x + y)\nabla \eta\left(\frac{2\lambda x}{\abs{y}}\right)}^p\abs{y}^{-bp}\d{x}\\
			&=: I_1 + I_2. 
		\end{align*}
		Here, we have used that $p>1$ so that convexity implies $(\kappa + \iota)^p \le 2^{p-1}\left(\kappa^p + \iota^p\right)$ for all $\kappa,\iota \ge 0$. We bound $I_1$ following the procedure used to bound the $L^q$-norm of $v$. This yields
		\[
		I_1 \le C 2^{p-1}\left(\frac{\lambda}{\abs{y}}\right)^{p(b-a)}\norm{u}_{\D_a^{1,p}(\RR^n,0)}^p.
		\]
		For the second term, with $C_1>0$ such that $2^{1-2p}C_1$ is an upper bound of $\abs{\nabla \eta}^p$, we have
		\begin{align*}
			I_2
			&\le C_1 \left(\frac{\lambda}{\abs{y}}\right)^p\lambda^{np/q}\int_{2\lambda\abs{x}\le \abs{y}}\abs{u (\lambda x + y)}^p\abs{y}^{-bp}\d{x}\\
			&\le  C_1 \left(\frac{\lambda}{\abs{y}}\right)^p\left(\lambda^n\int_{2\lambda\abs{x}\le \abs{y}}\abs{u (\lambda x + y)}^q\abs{y}^{-bq}\d{x}\right)^{p/q}\left(\int_{2\lambda\abs{x}\le \abs{y}} 1 \d{x}\right)^{p(1+a-b)/n}
		\end{align*}
		by H\"older's inequality (with H\"older exponent $q/p$ and H\"older-conjugate exponent $n/[p(1+a-b)]$). Then, up to a modification of the constant $C>0$, we see that
		\begin{align*}
			I_2 &\le C \left(\frac{\lambda}{\abs{y}}\right)^{p(b-a)}\left(\lambda^n\int_{2\lambda\abs{x}\le \abs{y}}\abs{u (\lambda x + y)}^q\abs{\lambda x + y}^{-bq}\d{x}\right)^{p/q}\\
			&= C \left(\frac{\lambda}{\abs{y}}\right)^{p(b-a)}\left(\int_{2\abs{z-y}\le \abs{y}}\abs{u (z)}^q\abs{z}^{-bq}\d{x}\right)^{p/q}\\
			&\le  C \left(\frac{\lambda}{\abs{y}}\right)^{p(b-a)}\norm{u}_{L_b^q(\RR^n,0)}^p.
		\end{align*}
		Finally, by the CKN inequality \eqref{eq:CKN}, up to another modification of the constant $C>0$, we deduce that
		\[
		I_2 \le C\left(\frac{\lambda}{\abs{y}}\right)^{p(b-a)}\norm{u}_{\D_a^{1,p}(\RR^n,0)}^p.
		\]
		Combining this with our inequality for $I_1$, we deduce that (possibly adjusting $C>0$ again)
		\[
		\norm{v}_{\D^{1,p}(\RR^n)}^p \le C\left(\frac{\lambda}{\abs{y}}\right)^{p(b-a)}\norm{u}_{\D_a^{1,p}(\RR^n,0)}^p
		\]
		which completes our proof since $C$ is independent of $u,y,\lambda$.
	\end{proof}
	
	\begin{rem}\label{rem:rescale_mod_bound_set}
		Inspecting the proof of Lemma \ref{lem:rescale_mod_bound}, it is easy to see that we may instead bound the norms on an arbitrary measurable set $U\subseteq \RR^n$. Indeed, the very same argument shows that the same constant $C>0$ (which is independent of $y, \lambda, U$) satisfies
		\begin{equation}
			\begin{aligned}
				\norm{\tau_{y,\lambda}u}_{L^q(U)} &\le C\norm{u}_{L^q_b(\lambda U + y,0)}\\
				\norm{\nabla\left[\tau_{y,\lambda}u\right]}_{L^p(U)} &\le C\left(\frac{\lambda}{\abs{y}}\right)^{b-a}\left(\norm{\nabla u}_{L^p_a(\lambda U + y,0)} +  \norm{u}_{L^q_b(\lambda U + y,0)}\right)
			\end{aligned}
		\end{equation}
		for all $u\in \D_a^{1,p}(\RR^n,0)$.
	\end{rem}
	In line with what we have done thus far, we shall also require a suitable ``reverse'' transform which will serve as a pseudoinverse for $\tau_{y,\lambda}$. Formally, we define
	\[
	\tau_{y, \lambda}^- : \D^{1,p}(\RR^n) \to \D_a^{1,p}(\RR^n,0)
	\]
	via the rule
	\begin{equation}\label{eq:rescale_mod_inv}
		\tau_{y, \lambda}^-v(z) = \begin{cases}
			\displaystyle\lambda^{-\gamma} v\left(\frac{z-y}{\lambda}\right) &\text{if } a = b = 0\\
			\displaystyle
			\left(\frac{\lambda}{\abs{y}}\right)^{-b} \lambda^{-\gamma} v\left(\frac{z-y}{\lambda}\right)\eta\left(\frac{2(z-y)}{\abs{y}}\right)&\text{otherwise}.
		\end{cases}
	\end{equation}
	Obviously, by inspection, it is easy to check that $\tau_{y,\lambda}^-$ is the proper inverse of $\tau_{y,\lambda}$ when $a=b=0$. With this in mind, let us now establish a partial analog of Lemma \ref{lem:rescale_mod_bound}, thereby providing operator bounds related to this pseudoinversion mapping.
	\begin{lem}\label{lem:rescale_mod_inv_bound}
		There exists a constant $C>0$ such that, given a point $y \in \RR^n\setminus\set{0}$ along with some $\lambda > 0$, there holds
		\[
		\begin{aligned}
			\norm{\tau^-_{y,\lambda}v}_{L^q_b(\RR^n,0)} &\le C\norm{v}_{L^q(\RR^n)}\\
			\norm{\tau^-_{y,\lambda}v}_{\D_a^{1,p}(\RR^n,0)} &\le C\left(\frac{\abs{y}}{\lambda}\right)^{p(b-a)}\norm{v}_{\D^{1,p}(\RR^n)}
		\end{aligned}
		\]
		for all $v\in \D^{1,p}(\RR^n)$.
	\end{lem}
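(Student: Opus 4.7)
The plan is to establish Lemma \ref{lem:rescale_mod_inv_bound} by mirroring the argument used to prove Lemma \ref{lem:rescale_mod_bound}, with the roles of the weighted and unweighted spaces swapped. The case $a=b=0$ is immediate from standard rescaling, so I focus on $(a,b) \ne (0,0)$. The crucial geometric observation is now the following: if $z \in \RR^n$ lies in the support of $\eta\!\left(\frac{2(z-y)}{\abs{y}}\right)$, then $\abs{z-y} \le \abs{y}/2$, so by the triangle and reverse-triangle inequalities one has $\frac{1}{2} \le \frac{\abs{z}}{\abs{y}} \le \frac{3}{2}$. This yields a constant $C = C(ap, bq)>0$ independent of $y, \lambda$ such that $\abs{z}^{-ap} \le C \abs{y}^{-ap}$ and $\abs{z}^{-bq} \le C\abs{y}^{-bq}$ on the relevant support, a direct analog of \eqref{eq:ratio bound}.

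For the first inequality, the plan is to use the change of variables $x = (z-y)/\lambda$, $\d z = \lambda^n \d x$, in $\norm{\tau_{y,\lambda}^- v}_{L^q_b(\RR^n,0)}^q$. After pulling out the prefactor $(\lambda/\abs{y})^{-bq}\lambda^{-\gamma q}$ and absorbing $\lambda^n$, the identity $\gamma q = n - bq$ (from \eqref{eq:exponent}) causes the $\lambda$ and $\abs{y}$ powers to collapse to $\abs{y}^{bq}$, which is then compensated by the bound $\abs{\lambda x + y\,}^{-bq} \le C\abs{y}^{-bq}$ arising from the geometric observation applied to the integration variable. This delivers $\norm{\tau_{y,\lambda}^- v}_{L^q_b(\RR^n,0)} \le C \norm{v}_{L^q(\RR^n)}$.

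For the gradient estimate, the product rule gives $\nabla \tau_{y,\lambda}^- v$ as a sum of two pieces — one involving $\nabla v \cdot \eta$ (call the resulting integral $I_1$) and one involving $v\cdot \nabla \eta$ (call it $I_2$) — and I estimate $\norm{\nabla \tau_{y,\lambda}^- v}_{L^p_a(\RR^n,0)}^p \le 2^{p-1}(I_1 + I_2)$. The term $I_1$ is handled exactly as in Step~1 but with $q$ replaced by $p$ and $b$ by $a$; after changing variables and using $\abs{\lambda x+ y}^{-ap} \le C\abs{y}^{-ap}$, the remaining factors simplify to $C(\abs{y}/\lambda)^{p(b-a)}\norm{\nabla v}_{L^p(\RR^n)}^p$. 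The term $I_2$ is the only non-routine piece: after the same change of variables it reduces to $C(\lambda/\abs{y})^{-bp}\lambda^{ap}\abs{y}^{-ap-p}\lambda^p \cdot \int_{2\lambda\abs{x}\le\abs{y}} \abs{v(x)}^p\,\d x$, and I bound the remaining $L^p$-integral on a ball of radius $\abs{y}/(2\lambda)$ by Hölder's inequality with exponent $p^\ast/p$, picking up $C(\abs{y}/\lambda)^p$ from the measure of the ball and $\norm{v}_{L^{p^\ast}(\RR^n)}^p$, which in turn is controlled by $C\norm{\nabla v}_{L^p(\RR^n)}^p$ via the classical Sobolev inequality. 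The powers recombine to give $I_2 \le C(\abs{y}/\lambda)^{p(b-a)}\norm{v}_{\D^{1,p}(\RR^n)}^p$.

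Adding $I_1$ and $I_2$ and taking $p$-th roots yields the claimed bound, with a constant $C$ independent of $v$, $y$, and $\lambda$. The main obstacle is purely bookkeeping: keeping track of the powers of $\lambda$ and $\abs{y}$ in $I_2$ and verifying that the Sobolev step introduces precisely the correct compensating factor $(\abs{y}/\lambda)^p$. The factor $(\lambda/\abs{y})^{-b}$ built into the definition of $\tau_{y,\lambda}^-$ is exactly what makes these exponents balance.
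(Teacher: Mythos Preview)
Your proposal is correct and follows essentially the same route as the paper's own proof: the same geometric observation on the support of the cutoff, the same change of variables for the $L^q_b$ estimate, the same product-rule splitting into $I_1$ and $I_2$ for the gradient bound, and the same H\"older-plus-Sobolev treatment of $I_2$ with exponent $p^\ast/p$. The only cosmetic difference is that you phrase the ratio bound in the $z$-variable before changing variables, whereas the paper states it in the $x$-variable after; the content is identical.
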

	\begin{proof} Once again, the case $a=b=0$ is trivial since it follows from homogeneity. Otherwise, as in the proof of Lemma \ref{lem:rescale_mod_bound} with equation \eqref{eq:ratio bound}, we see that there exists a constant $C>0$ independent of $y, \lambda$ such that
		\begin{equation}\label{eq:ratio bound inv}
			\left(\frac{\abs{\lambda x + y}}{\abs{y}}\right)^{-ap} \le C,\quad
			\left(\frac{\abs{\lambda x + y}}{\abs{y}}\right)^{-bq} \le C
		\end{equation}
		for all $x\in \RR^n$ satisfying $2\lambda \abs{x} \le \abs{y}$. In particular, \eqref{eq:ratio bound inv} is valid for all $x$ such that $2\lambda x/\abs{y}$ is in the support of $\eta$.
		
		For an arbitrary $v\in \D^{1,p}(\RR^n)$, set $u := \tau_{y, \lambda}v$. Using that $0\le\eta\le 1$ we see that
		\begin{align*}
			\norm{u}_{L^q_b(\RR^n,0)}^q
			&=\int_{\RR^n}\abs{u(z)}^q\abs{z}^{-bq}\d{z}\\
			&\le \left(\frac{\lambda}{\abs{y}}\right)^{-bq} \lambda^{-\gamma q}\int_{2\abs{z-y}\le \abs{y}} \abs{v\left(\frac{z-y}{\lambda}\right)}^q\abs{z}^{-bq}\d{z}\\
			&= \int_{2\abs{z-y}\le \abs{y}} \abs{v(x)}^q\left(\frac{\abs{\lambda x + y}}{\abs{y}}\right)^{-bq}\d{x}\\
			&\le C\norm{v}_{L^q(\RR^n)}^q.
		\end{align*}
		Next, in order to bound the $\D_a^{1,p}$ norm of $u$, we once again apply the product rule to obtain two terms:
		\begin{align*}
			\norm{u}_{\D_a^{1,p}(\RR^n,0)}^p
			&=\int_{\RR^n}\abs{\nabla u(z)}^p\abs{z}^{-ap}\d{z}\\
			&\le 2^{p-1}\left(\frac{\lambda}{\abs{y}}\right)^{-b p} \lambda^{-n + ap}\int_{\RR^n} \abs{\eta\left(\frac{2(z-y)}{\abs{y}}\right)\nabla v\left(\frac{z-y}{\lambda}\right)}^p\abs{z}^{-ap}\d{z}\\
			&\quad + 2^{p-1+p}\left(\frac{\lambda}{\abs{y}}\right)^{-b p} \lambda^{-\gamma p}\abs{y}^{-p}\int_{\RR^n} \abs{ v\left(\frac{z-y}{\lambda}\right)\nabla\eta\left(\frac{2(z-y)}{\abs{y}}\right)}^p\abs{z}^{-ap}\d{z}\\
			&=:I_1 + I_2
		\end{align*}
		where we have used that $(\kappa + \iota)^p \le 2^{p-1}\left(\kappa^p + \iota^p\right)$ for all $\kappa,\iota \ge 0$. Then, after a change of variables, observe that
		\begin{align*}
			I_1 &\le 2^{p-1}\left(\frac{\lambda}{\abs{y}}\right)^{-b p} \lambda^{ap}\int_{2\lambda \abs{x} \le \abs{y}} \abs{\nabla v(x)}^p\abs{\lambda x + y}^{-ap}\d{z}\\
			&\le C 2^{p-1}\left(\frac{\abs{y}}{\lambda}\right)^{p(b-a)} \norm{v}_{\D^{1,p}(\RR^n)}^p.
		\end{align*}
		Similarly, with $C_1>0$ such that $2^{1-2p}C_1$ is an upper bound of $\abs{\nabla\eta}^p$ we see that
		\begin{align*}
			I_2 &\le C_1 \left(\frac{\lambda}{\abs{y}}\right)^{-b p} \lambda^{ap + p}\abs{y}^{-p}\int_{2\lambda \abs{x} \le \abs{y}} \abs{ v\left(x\right)}^p\abs{\lambda x + y}^{-ap}\d{x}\\
			&\le C C_1 \left(\frac{\lambda}{\abs{y}}\right)^{p(1+a-b)} \int_{2\lambda \abs{x} \le \abs{y}} \abs{ v\left(x\right)}^p\d{x}.
		\end{align*}
		Then, by H\"older's inequality (with exponent $p^\ast/p$ and H\"older-conjugate exponent $n/p$),
		\begin{align*}
			I_2 &\le C C_1 2^p\left(\frac{\lambda}{\abs{y}}\right)^{p(1+a-b)}\norm{v}_{L^{p^\ast}(\RR^n)}^p\left(\int_{2\lambda\abs{x}\le \abs{y}}1\right)^{p/n}\\
			&\le C\left(\frac{\abs{y}}{\lambda}\right)^{p(b-a)} \norm{v}_{L^{p^\ast}(\RR^n)}^p
		\end{align*}
		up to an adjustment of the constant $C$ in the second inequality. We then conclude our proof via an application of the Gagliardo-Nirenberg-Sobolev inequality (i.e. the CKN inequality \eqref{eq:CKN} with $a=b=0$) and combining our bounds for $I_1$ and $I_2$.
	\end{proof}
	\begin{rem}\label{rem:rescale_mod_inv_bound_set}
		Let now $U \subseteq \RR^n$ be Lebesgue measurable. As was noted for the forward transform, we observe that an identical argument exhibits the existence of a constant $C>0$ (independent of $y, \lambda, U$) such that
		\begin{equation}
			\begin{aligned}
				\norm{\tau^-_{y,\lambda}v}_{L^q_b(U,0)} &\le C\norm{v}_{L^q([U-y]/\lambda)}\\
				\norm{\nabla\left[\tau^-_{y,\lambda}v\right]}_{L^p_a(U,0)} &\le C\left(\frac{\abs{y}}{\lambda}\right)^{p(b-a)}\left(\norm{v}_{L^q([U-y]/\lambda)} + \norm{\nabla v}_{L^p([U-y]/\lambda)}\right)
			\end{aligned}
		\end{equation}
		for all $v\in \D^{1,p}(\RR^n)$.
	\end{rem}
	
	\subsubsection{Transformations and the $L^q$-norm} As was shown, the transform $\tau_{y, \lambda}$ is particularly well behaved with respect to the $L^q$-norm. It turns out that when $\abs{y}/\lambda$ is large, the $L^q$-norm of the transform remains \emph{almost} unchanged on a set where $\eta$ acts as the identity. More precisely, we have the following result:
	
	\begin{lem}\label{lem:unit ball Lq norm}
		There exists a constant $C>0$ such that, for any $u\in \D^{1,p}_a(\RR^n,0)$ and $y\in \RR^n\setminus\set{0}$, $\lambda > 0$ such that $\abs{y} \ge 4\lambda$, the remainder $R$ in
		\[
		\norm{\tau_{y,\lambda} u}_{L^q(B(0,1))}^q = \norm{u}_{L^q_b(B(y,\lambda),0)}^q + R
		\]
		satisfies
		\[
		\abs{R} \le C\frac{\lambda}{\abs{y}}\norm{u}_{L^q_b(B(y,\lambda),0)}^q
		\]
	\end{lem}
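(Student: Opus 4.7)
The plan is to make everything explicit by direct calculation, exploiting the fact that the hypothesis $\abs{y} \ge 4\lambda$ forces the cut-off $\eta$ to act as the identity on the region of integration, reducing the problem to a pointwise comparison between the weights $\abs{y}^{-bq}$ and $\abs{z}^{-bq}$.

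First I would observe that for $x \in B(0,1)$ we have $\abs{2\lambda x / \abs{y}} \le 2\lambda / \abs{y} \le 1/2$, so $\eta(2\lambda x / \abs{y}) = 1$ throughout $B(0,1)$. Plugging in the definition \eqref{eq:rescale_mod} of $\tau_{y,\lambda}u$ and recalling $\gamma q = n - bq$, a change of variables $z = \lambda x + y$ (whose Jacobian contributes $\lambda^{-n}$ and pairs with $\lambda^{\gamma q}\cdot(\lambda/\abs{y})^{bq} = \lambda^n \abs{y}^{-bq}$) yields exactly
\[
\norm{\tau_{y,\lambda}u}_{L^q(B(0,1))}^q = \int_{B(y,\lambda)} \abs{u(z)}^q \abs{y}^{-bq}\,\d{z}.
\]
Hence the remainder is $R = \int_{B(y,\lambda)} \abs{u(z)}^q \big[\abs{y}^{-bq} - \abs{z}^{-bq}\big]\,\d{z}$, and the whole task reduces to controlling the difference of the two weights on $B(y,\lambda)$.

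The key pointwise estimate is that, for $z \in B(y,\lambda)$ with $\abs{y}\ge 4\lambda$, the reverse triangle inequality gives $\bigl|\abs{z}/\abs{y} - 1\bigr| \le \abs{z-y}/\abs{y} \le \lambda/\abs{y} \le 1/4$. Writing $t := \abs{z}/\abs{y} \in [3/4, 5/4]$, we have
\[
\abs{y}^{-bq} - \abs{z}^{-bq} = \abs{z}^{-bq}\bigl(t^{bq} - 1\bigr),
\]
and since the map $t \mapsto t^{bq}$ is $C^1$ on the compact interval $[3/4,5/4]$, the mean value theorem furnishes a constant $C = C(bq) > 0$, independent of $u, y, \lambda$, such that $\abs{t^{bq}-1} \le C\abs{t-1} \le C\lambda/\abs{y}$. (The case $bq=0$ is trivial: then $R=0$.)

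Combining the two displays, I obtain
\[
\abs{R} \le C\frac{\lambda}{\abs{y}} \int_{B(y,\lambda)} \abs{u(z)}^q \abs{z}^{-bq}\,\d{z} = C\frac{\lambda}{\abs{y}}\norm{u}_{L^q_b(B(y,\lambda),0)}^q,
\]
which is the claimed bound. There is no genuine obstacle here: the proof is essentially bookkeeping once one notices that the hypothesis $\abs{y}\ge 4\lambda$ turns off the cutoff on $B(0,1)$ and simultaneously keeps the weight ratio $\abs{z}/\abs{y}$ in a bounded interval away from $0$ and $\infty$. The only subtle point worth stating carefully is the uniformity of the mean-value constant in $t\in[3/4,5/4]$, since $bq$ may a priori be negative; this is what makes the estimate independent of $y$ and $\lambda$.
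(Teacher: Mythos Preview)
Your proof is correct and follows essentially the same approach as the paper: the paper also observes that $\eta\equiv 1$ on the region of integration, performs the same change of variables to identify $R=\int_{B(y,\lambda)}\abs{u(z)}^q\bigl(\abs{y}^{-bq}-\abs{z}^{-bq}\bigr)\d{z}$, and then bounds the weight difference by a mean-value estimate (packaged there as Lemma~\ref{lem:cutoff tools}). Your inline mean-value argument with $t=\abs{z}/\abs{y}\in[3/4,5/4]$ is exactly the content of that lemma, so the two proofs are the same up to presentation.
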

	Before presenting the proof, we recall that Lemma \ref{lem:rescale_mod_bound} and \ref{lem:rescale_mod_inv_bound}  relied on $\eta$ in order to restrict our support; then, on this set, we utilized uniform bounds. Estimates of this form will be of repeated use and are thus presented independently as follows:
	\begin{lem}\label{lem:cutoff tools}
		There exists a constant $C>0$ such that
		\begin{enumerate}[label=(\alph*)]
			\item\label{point1} Given $y\in \RR^n\setminus\set{0}$ and $\lambda > 0$,
			\[
			\frac{1}{C} \le \frac{\abs{\lambda x + y}}{\abs{y}} \le C \quad\text{for all } x\in \RR^n \text{ such that } \abs{x} \le \frac{\abs{y}}{2\lambda}.
			\]
			\item\label{point2} Given $y\in \RR^n\setminus\set{0}$, $\lambda > 0$ and any bounded set $K$ such that $\rho := \sup_{x\in K}\abs{x}$ satisfies $4\lambda \rho \le \abs{y}$, there holds
			\[
			\frac{\abs{\abs{y}^{-ap} - \abs{\lambda x + y}^{-ap}}}{\abs{y}^{-ap}} \le C\rho\frac{\lambda}{\abs{y}}, \qquad \frac{\abs{\abs{y}^{-bq} - \abs{\lambda x + y}^{-bq}}}{\abs{y}^{-bq}} \le C\rho\frac{\lambda}{\abs{y}}
			\]
			for all $x\in K$.
		\end{enumerate} 
	\end{lem}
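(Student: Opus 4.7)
The plan is to prove the two bounds in sequence, as part \ref{point2} will rely directly on part \ref{point1} together with a mean-value-theorem estimate applied to the maps $t\mapsto t^{-ap}$ and $t\mapsto t^{-bq}$. Throughout the argument, the constants may be taken independent of $y,\lambda,x$, and this will be transparent from the estimates.

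For \ref{point1}, the bound is a direct application of the triangle and reverse triangle inequalities. Since $|x|\le |y|/(2\lambda)$ implies $|\lambda x|\le |y|/2$, one immediately obtains
\[
\tfrac{1}{2}|y| \;\le\; |y| - |\lambda x| \;\le\; |\lambda x + y| \;\le\; |y| + |\lambda x| \;\le\; \tfrac{3}{2}|y|,
\]
so that the constant $C=2$ (for instance) works. This two-sided comparability of $|\lambda x + y|$ with $|y|$ is the main ingredient for the next step.

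For \ref{point2}, the hypothesis $4\lambda\rho\le |y|$ ensures that every $x\in K$ satisfies $|x|\le |y|/(4\lambda) \le |y|/(2\lambda)$, so \ref{point1} applies and both $|y|$ and $|\lambda x + y|$ lie in an interval of the form $[|y|/C_0,\,C_0|y|]$. I would then write, via the mean value theorem on $t\mapsto t^{-ap}$,
\[
\bigl| |y|^{-ap} - |\lambda x + y|^{-ap}\bigr|
\;=\; ap\,\xi^{-ap-1}\,\bigl||y| - |\lambda x + y|\bigr|
\]
for some $\xi$ between $|y|$ and $|\lambda x+y|$. Depending on the sign of $-ap-1$, one bounds $\xi^{-ap-1}$ by either $(|y|/C_0)^{-ap-1}$ or $(C_0|y|)^{-ap-1}$; in both cases $\xi^{-ap-1}\le C\,|y|^{-ap-1}$ for a constant depending only on $ap$ and $C_0$. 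Coupled with the reverse triangle inequality $\bigl||y| - |\lambda x + y|\bigr|\le |\lambda x|\le \lambda\rho$, dividing by $|y|^{-ap}$ yields
\[
\frac{\bigl| |y|^{-ap} - |\lambda x + y|^{-ap}\bigr|}{|y|^{-ap}}
\;\le\; C\,\frac{\lambda|x|}{|y|} \;\le\; C\,\rho\,\frac{\lambda}{|y|},
\]
as desired. The estimate for the $-bq$ exponent is obtained by the identical argument with $ap$ replaced by $bq$, and the final constant $C$ can be taken as the maximum of the two.

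There is no real obstacle here: the only subtlety worth flagging is that $a$ and $b$ may be negative under condition \eqref{eq:conditions}, so the exponents $-ap$ and $-bq$ carry no definite sign. This is handled cleanly by the mean value argument, since after invoking \ref{point1} the intermediate point $\xi$ is trapped in an interval on which $t\mapsto t^{-ap-1}$ is monotone and comparable (up to constants) to $|y|^{-ap-1}$ regardless of sign.
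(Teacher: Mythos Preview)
Your proof is correct and follows essentially the same approach as the paper: part \ref{point1} via the triangle and reverse triangle inequalities, and part \ref{point2} via the mean value theorem applied to $t\mapsto t^{-ap}$ (respectively $t\mapsto t^{-bq}$), using part \ref{point1} to control the intermediate point. The only cosmetic slip is that the derivative of $t\mapsto t^{-ap}$ is $-ap\,t^{-ap-1}$, so the factor in your mean-value identity should be $|ap|$ rather than $ap$; your subsequent discussion of sign issues shows you are aware of this.
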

	\begin{proof}
		The first inequality follows from the reverse and usual triangle inequalities. Next, let $y, \lambda, K, \rho$ be as in \ref{point2}. Given an arbitrary $x\in K$ write $z := \lambda x + y$ and observe that
		\[
		\abs{z} \le \abs{y} + \lambda\abs{x} \le \left(1+\frac{\lambda\rho}{\abs{y}}\right)\abs{y}.
		\]
		Using also the reverse triangle inequality, we see that
		\[
		\left(1-\epsilon\right)\abs{y} \le \abs{z} \le \left(1+\epsilon\right)\abs{y}
		\]
		where $\epsilon := \lambda\rho/\abs{y} \le 1/4$. From this, we further deduce that
		\[
		\abs{\abs{y}^{-ap} - \abs{z}^{-ap}} \le \abs{1-\left(1 + \epsilon_\pm\right)^{-ap}}\abs{y}^{-ap}
		\]
		where $\epsilon_{\pm}$ is either $\epsilon$ or $-\epsilon$. It then follows from the Mean Value Theorem that
		\[
		\abs{\abs{y}^{-ap} - \abs{z}^{-ap}} \le \abs{ap}\left(1+\delta\right)^{-ap-1}\epsilon\abs{y}^{-ap}
		\]
		for some $\delta$ in $[-\epsilon,\epsilon]$. From this last equation and recalling that $\abs{\delta}\le\epsilon \le 1/4$, we see that there exists a constant $C>0$ independent of $y, \lambda$ such that
		\[
		\frac{\abs{\abs{y}^{-ap} - \abs{z}^{-ap}}}{\abs{y}^{-ap}} \le C\epsilon,\qquad \forall z\in\lambda K + y.
		\]
		Analogously, modifying $C>0$ if necessary, we may establish that
		\[
		\frac{\abs{\abs{y}^{-bq} - \abs{z}^{-bq}}}{\abs{y}^{-bq}} \le C\epsilon,\qquad \forall z\in\lambda K + y.
		\]
	\end{proof}
	
	With this, we are ready to prove Lemma \ref{lem:unit ball Lq norm}
	\begin{proof}[Proof of Lemma \ref{lem:unit ball Lq norm}]
		Let $u\in \D^{1,p}_a(\RR^n, 0)$, $y\in \RR^n\setminus\set{0}$, and $\lambda > 0$ such that $\abs{y} \ge 4\lambda$. Properties of $\eta$ imply that
		\[
		\int_{B(0,1)}\abs{\tau_{y,\lambda}u}^q\d{x}
		=\lambda^n\int_{B(0,1)}\abs{u\left(\lambda x + y\right)}^q\abs{y}^{-bq}\d{x}
		=\int_{B(y,\lambda)}\abs{u\left(z\right)}^q\abs{y}^{-bq}\d{z},
		\]
		where we have used a change of variables in this last equality. Therefore,
		\[
		\norm{\tau_{y,\lambda} u}_{L^q(B(0,1))}^q - \norm{u}_{L^q_b(B(y,\lambda),0)}^q
		=\int_{B(y,\lambda)}\abs{u\left(z\right)}^q\left(\abs{y}^{-bq}-\abs{z}^{-bq}\right)\d{z}.
		\]
		Then, using both parts of Lemma \ref{lem:cutoff tools}, we infer that there is a constant $C>0$ independent of $y,\lambda, u$ such that
		\begin{align*}
			\abs{\norm{\tau_{y,\lambda} u}_{L^q(B(0,1))}^q - \norm{u}_{L^q_b(B(y,\lambda),0)}^q}
			&\le C\frac{\lambda}{\abs{y}}\int_{B(y, \lambda)}\abs{u(z)}^q\abs{z}^{-bq}\d{z}
		\end{align*}
		which concludes our proof.
	\end{proof}
	
	\subsection{Locality} A useful property of the transforms we introduced is, when $a=b$, the ability to restrict domains. More specifically, we have the following technical result.
	\begin{lem}\label{lem:away from ball}
		Given $v\in \D^{1,p}(\RR^n)$, if $a=b$ then
		\[
		\norm{\tau^-_{y, \lambda}v}_{L_b^q(\RR^n\setminus B(y, \abs{y}/4))} \to 0,\quad\text{and}\quad
		\norm{\nabla \left[\tau^-_{y, \lambda}v\right]}_{L_a^p(\RR^n\setminus B(y, \abs{y}/4))} \to 0
		\]
		as $\abs{y}/\lambda \to\infty$.
	\end{lem}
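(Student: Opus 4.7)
The plan is to reduce the lemma to a direct application of the localized bounds in Remark~\ref{rem:rescale_mod_inv_bound_set}, specialized to $U := \RR^n\setminus B(y,\abs{y}/4)$. With this choice one has $(U-y)/\lambda = \RR^n\setminus B(0,\abs{y}/(4\lambda))$, and the remark furnishes
\[
\norm{\tau^-_{y,\lambda}v}_{L^q_b(U,0)} \le C\norm{v}_{L^q(\RR^n\setminus B(0,\abs{y}/(4\lambda)))}
\]
together with the analogous estimate for $\nabla[\tau^-_{y,\lambda}v]$ in $L^p_a(U,0)$, carrying a prefactor $(\abs{y}/\lambda)^{p(b-a)}$ and involving both $\norm{v}_{L^q(\RR^n\setminus B(0,\abs{y}/(4\lambda)))}$ and $\norm{\nabla v}_{L^p(\RR^n\setminus B(0,\abs{y}/(4\lambda)))}$.

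The role of the hypothesis $a=b$ is twofold, and this is where the lemma really lives. First, the potentially dangerous prefactor $(\abs{y}/\lambda)^{p(b-a)}$ collapses to $1$; were $b>a$, it would diverge as $\abs{y}/\lambda\to\infty$ and obstruct the argument entirely. Second, since $a=b$ forces $q=p^\ast$, the characterization $\D^{1,p}(\RR^n) = \set{u\in L^{p^\ast}(\RR^n) : \nabla u\in L^p(\RR^n)}$ recalled earlier in the paper guarantees $v\in L^q(\RR^n)$ together with $\nabla v\in L^p(\RR^n)$, so integrable majorants are automatically in hand.

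With these two ingredients the remainder is routine. As $\abs{y}/\lambda\to\infty$, the radius $\abs{y}/(4\lambda)\to\infty$, so the tails $\RR^n\setminus B(0,\abs{y}/(4\lambda))$ shrink to the empty set in measure, and the dominated convergence theorem forces both norms on the right-hand side of the estimates above to tend to $0$. I anticipate no substantive obstacle beyond securing the two observations above; the only bookkeeping subtlety is that the cutoff $\eta(2(z-y)/\abs{y})$ in the definition of $\tau^-_{y,\lambda}$ already confines $\supp(\tau^-_{y,\lambda}v)\subset B(y,\abs{y}/2)$, so the norms on $\RR^n\setminus B(y,\abs{y}/4)$ are effectively norms on the annulus $B(y,\abs{y}/2)\setminus B(y,\abs{y}/4)$, a fact already implicit in the statement of Remark~\ref{rem:rescale_mod_inv_bound_set}.
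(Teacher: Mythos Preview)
Your proposal is correct and follows essentially the same approach as the paper: apply Remark~\ref{rem:rescale_mod_inv_bound_set} with $U=\RR^n\setminus B(y,\abs{y}/4)$, use $a=b$ to kill the prefactor $(\abs{y}/\lambda)^{p(b-a)}$ and to ensure $q=p^\ast$ so that $v\in L^q(\RR^n)$, and conclude by dominated convergence on the shrinking tails. Your explicit identification of the two roles played by the hypothesis $a=b$ is a nice touch that the paper leaves implicit.
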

	\begin{proof}
		This result follows almost immediately from Remark \ref{rem:rescale_mod_inv_bound_set}. Indeed, there is a constant $C>0$ independent of $\lambda, y, v$ such that
		\[
		\norm{\tau^-_{y,\lambda}v}_{L^q_b(\RR^n\setminus B(y, \abs{y}/4))} \le C\norm{v}_{L^q\left(\RR^n\setminus B\left(0, \frac{\abs{y}}{4\lambda}\right)\right)}.
		\]
		Now, since $a=b$ we note that the Sobolev exponent is 
		\[	
		p^\ast = \frac{np}{n-p} = q.
		\]
		Therefore, $v\in L^q(\RR^n)$ and, as $\abs{y}/\lambda \to\infty$, the dominated convergence theorem implies that
		\[
		\norm{\tau^-_{y,\lambda}v}_{L^q_b(\RR^n\setminus B(y, \abs{y}/4))} \le C\norm{v}_{L^q\left(\RR^n\setminus B\left(0, \frac{\abs{y}}{4\lambda}\right)\right)} \to 0.
		\]
		Similarly, since $a=b$, Remark \ref{rem:rescale_mod_inv_bound_set} allows us to write
		\[
		\norm{\nabla \left[\tau^-_{y, \lambda}v\right]}_{L_a^p(\RR^n\setminus B(y, \abs{y}/4))} \le C\left(\norm{v}_{L^q\left(\RR^n\setminus B\left(0, \frac{\abs{y}}{4\lambda}\right)\right)} + \norm{\nabla v}_{L^p\left(\RR^n\setminus B\left(0, \frac{\abs{y}}{4\lambda}\right)\right)}\right)
		\]
		for some $C>0$ independent of $y,\lambda, v$. Once again, the dominated convergence theorem implies that the right-hand side tends to $0$ as $\abs{y}/\lambda \to\infty$.
	\end{proof}
	
	\section{Convergence of Domains} As in the previous sections, we continue to assume that $\Omega\subset \RR^n$ denotes a bounded domain containing the origin. 
Throughout this section, we fix sequences $(y_\alpha)$ in $ \overline{\Om}$ and $\lambda_\alpha $ in $ (0, \infty)$ satisfying some combination of the following properties:
\begin{enumerate}[label = (P\arabic*)]
	\item\label{it:lambda=0} $\lambda_\alpha \to 0$;
	\item\label{it:ratio x_0} The ratio converges in the sense that, for some $x_0\in \RR^n$,
	\[
	\frac{y_\alpha}{\lambda_\alpha} \to x_0.
	\] 
	\item\label{it:ratio inf} The ratio diverges:
	\[
	\frac{\abs{y_\alpha}}{\lambda_\alpha} \to \infty,
	\]
	\item\label{it:concentrates at boundary} The sequence $(y_\alpha)$ approaches the boundary at rate no less than the rescaling factor; i.e. the following ratio is bounded:
	\[
	\frac{\operatorname{dist}(y_\alpha, \partial\Om)}{\lambda_\alpha}.
	\]
	\item\label{it:concentrates in domain} The sequence $(y_\alpha)$ approaches the boundary at a rate slower than the rescaling factor; i.e.
	\[
	\frac{\operatorname{dist}(y_\alpha, \partial\Om)}{\lambda_\alpha} \to \infty.
	\]
\end{enumerate}

The goal of this section is to study the limiting behaviour of specific sequences of sets. First, we describe the sense in which we will consider the convergence of sets.
\begin{Def}
	Let \((U_\alpha)\) be a sequence of subsets of \(\RR^n\). We say that \((U_\alpha)\) converges to \(U\subseteq \RR^n\) and write \(U_\alpha\to U\) provided
	\begin{enumerate}[label=(\alph*)]
		\item\label{dom:1} Any compact subset of $U$ is contained in $U_\alpha$ for all $\alpha$ large.
		\item\label{dom:2} Any compact subset of $\left(\overline{U}\right)^\complement$ is contained in $\left(\overline{U_\alpha}\right)^\complement\subseteq U_\alpha^\complement$ for all $\alpha$ large.
	\end{enumerate}
\end{Def}

Now, suppose condition \ref{it:ratio x_0} holds. Then we consider the following deformations of \(\Omega\):
\[
\Om_\alpha = \frac{\Omega}{\lambda_\alpha} - x_0.
\]
Clearly, these sets are approximations of \((\Om-y_\alpha)/\lambda_\alpha\). Furthermore, we have the following result:
\begin{prop}\label{prop:domain convergence x_0}
	Suppose the sequence \((\lambda_\alpha)\) converges. Then, we have \(\Om_\alpha \to\Om_\infty\) where
	\[
	\Om_\infty
	=
	\begin{cases}
		\RR^n &\text{if condition \ref{it:lambda=0} holds, i.e. }\lambda_\alpha \to 0\\
		\dfrac{\Om}{\lambda}-x_0 &\text{if } \lambda_\alpha \to\lambda > 0.
	\end{cases}
	\]
\end{prop}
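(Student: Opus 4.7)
\medskip

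The plan is to verify directly the two defining conditions (a) and (b) of set convergence, splitting into the two scenarios $\lambda_\alpha \to 0$ and $\lambda_\alpha \to \lambda > 0$. The fundamental observation is that $x \in \Om_\alpha$ if and only if $\lambda_\alpha(x + x_0) \in \Om$, so both conditions translate into statements about how the rescaled compact set $\lambda_\alpha(K + x_0)$ sits relative to $\Om$ or $\overline{\Om}^\complement$. Because $\Om$ is open and bounded and the maps involved are homeomorphisms, the arguments reduce to showing that uniform convergence of $\lambda_\alpha$ pushes these rescaled sets into the correct target.

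First I would dispose of the case $\lambda_\alpha \to 0$, where $\Om_\infty = \RR^n$. Condition (b) is vacuous since $(\overline{\RR^n})^\complement = \emptyset$. For condition (a), fix a compact $K\subset\RR^n$. Since $0 \in \Om$ and $\Om$ is open, there exists $r > 0$ with $B(0,r)\subset \Om$. The set $K + x_0$ is bounded, say by $M>0$, so for $\alpha$ sufficiently large we have $\lambda_\alpha M < r$, whence $\lambda_\alpha(K+x_0)\subset B(0,r)\subset \Om$, i.e.\ $K \subset \Om_\alpha$.

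For the case $\lambda_\alpha \to \lambda > 0$, I would exploit that translation and positive scaling are homeomorphisms, so $\overline{\Om/\lambda - x_0} = \overline{\Om}/\lambda - x_0$ and thus $(\overline{\Om_\infty})^\complement = \overline{\Om}^\complement/\lambda - x_0$. For condition (a), let $K \subset \Om_\infty$ be compact; then $\lambda(K+x_0)$ is a compact subset of the open set $\Om$, hence $\delta := \operatorname{dist}(\lambda(K+x_0), \partial\Om) > 0$. For $x \in K$, writing $\lambda_\alpha(x+x_0) = \lambda(x+x_0) + (\lambda_\alpha - \lambda)(x+x_0)$, we obtain a uniform estimate $|\lambda_\alpha(x+x_0) - \lambda(x+x_0)| \le |\lambda_\alpha - \lambda|\cdot\sup_{x\in K}|x+x_0|$, which is smaller than $\delta$ for $\alpha$ large. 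Hence $\lambda_\alpha(K+x_0)\subset \Om$ and $K \subset \Om_\alpha$. For condition (b), an entirely analogous argument applies with $\Om$ replaced by its open complement $\overline{\Om}^\complement$: given compact $K \subset (\overline{\Om_\infty})^\complement$, the image $\lambda(K+x_0)$ has positive distance to $\overline{\Om}$, and the same uniform estimate forces $\lambda_\alpha(K+x_0)\subset \overline{\Om}^\complement$ eventually, giving $K \subset (\overline{\Om_\alpha})^\complement$.

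There is no real obstacle here; the only subtlety worth being careful about is the identification $\overline{\Om_\infty} = \overline{\Om}/\lambda - x_0$ in the second case (which uses that $y \mapsto y/\lambda - x_0$ is a homeomorphism) and the fact that the sequence $(y_\alpha)$ plays no role in this statement---it is the deformation by $\lambda_\alpha$ and $x_0$ alone that determines $\Om_\alpha$. The argument never needs hypotheses \ref{it:ratio inf}, \ref{it:concentrates at boundary}, or \ref{it:concentrates in domain}; only the convergence of $\lambda_\alpha$ is used.
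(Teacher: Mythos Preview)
Your proposal is correct and follows essentially the same approach as the paper: both arguments reduce to the equivalence $x\in\Om_\alpha \iff \lambda_\alpha(x+x_0)\in\Om$, use a ball $B(0,r)\subset\Om$ for the case $\lambda_\alpha\to 0$, and in the case $\lambda_\alpha\to\lambda>0$ exploit the positive distance of $\lambda(K+x_0)$ from $\partial\Om$ (the paper phrases this via the Lebesgue Number Lemma, you via $\operatorname{dist}(\cdot,\partial\Om)>0$) together with the uniform estimate $|\lambda_\alpha-\lambda|\sup_K|x+x_0|$. Your treatment of condition (b) via the homeomorphism identity $\overline{\Om_\infty}=\overline{\Om}/\lambda-x_0$ is slightly more explicit than the paper's, but the substance is identical.
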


\begin{proof}
	We first treat the case where $\lambda_\alpha \to 0$. Since $\Omega$ is an open set containing the origin, we have \(B(0, \delta)\subseteq \Omega\) for some \(\delta > 0\). It readily follows that
	\[
	B\left(-x_0,\frac{\delta}{\lambda_\alpha}\right) \subseteq \Om_\alpha
	\]
	for every index \(\alpha\). Then, for an arbitrary compact set \(K\), using \(\lambda_\alpha \to 0\) we see that
	\[
	K \subseteq B\left(-x_0,\frac{\delta}{\lambda_\alpha}\right) \subseteq \Om_\alpha
	\]
	for all \(\alpha\) large. Hence, $\Om_\alpha \to \RR^n$. 
	
	Let us now assume that we are in the case $\lambda_\alpha \to \lambda > 0$ and fix a compact set $K \subseteq \Om_\infty$. Then, $\lambda(K + x_0)$ is a compact subset of $\Om$. Since $\Om$ is open, the Lebesgue Number Lemma guarantees the existence of $\epsilon > 0$ such that $B(x,\epsilon) \subseteq \Om$ for all $x \in \lambda(K+x_0)$. Put otherwise, for all \(z\in K\) there holds
	\[
	B\left(\frac{\lambda}{\lambda_\alpha}(z+x_0) - x_0, \frac{\epsilon}{\lambda_\alpha}\right) \subseteq \Om_\alpha.
	\]
	On the other hand, since \(K+x_0\) is compact, it is bounded by some \(M>0\). Then, for any \(z\in K\), if \(\alpha\) is such that
	\[
	\abs{\lambda_\alpha - \lambda} < \frac{\epsilon}{M}
	\]
	then
	\[
	\abs{z - \left(\frac{\lambda}{\lambda_\alpha}(z+x_0) - x_0\right)} = \frac{1}{\lambda_\alpha}\abs{\lambda-\lambda_\alpha}\abs{z+x_0} 
	< 
	\frac{\epsilon}{\lambda_\alpha}.
	\]
	Since \(\lambda_\alpha\to\lambda>0\), it follows that for all \(\alpha\) sufficiently large we have
	\[
	K \subseteq B\left(\frac{\lambda}{\lambda_\alpha}(z+x_0) - x_0, \frac{\epsilon}{\lambda_\alpha}\right) \subseteq \Om_\alpha.
	\]	
	Finally, given a compact set $K \subseteq \left( \overline{\Om_\infty} \right)^\complement$, the same argument shows that \(K\subseteq \overline{\Omega_\alpha}^\complement\) for all \(\alpha\) large. We once again conclude that \(\Om_\alpha \to \Om_\infty\).
\end{proof}

Next, suppose condition \ref{it:ratio inf} holds. Then we will be interested in the following deformations of \(\Omega\):
\[
\Om_\alpha = \frac{\Omega - y_\alpha}{\lambda_\alpha}.
\]
In this case, if condition \ref{it:lambda=0} holds then we have the following result:

\begin{prop}\label{prop:domain convergence}
	Suppose $\Omega$ has $C^1$ boundary. If \ref{it:lambda=0} holds, i.e. $\lambda_\alpha \to 0$, then, up to a subsequence, $\Om_\alpha := (\Om-y_\alpha)/\lambda_\alpha$ tends to a set $\Om_\infty\subseteq \RR^n$.
	\begin{enumerate}
		\item If the distance between $y_\alpha$ and the boundary tends to $0$ no slower than $(\lambda_\alpha)$, i.e. if condition \ref{it:concentrates at boundary} holds, then $\Om_\infty$ is a half-space.
		\item On the other hand, if condition \ref{it:concentrates in domain} holds then $\Om_\infty = \RR^n$.
	\end{enumerate}
\end{prop}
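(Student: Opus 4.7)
My plan is to treat the two cases separately, handling (2) immediately and reducing (1) to a boundary-straightening argument. For case (2), if $K \subseteq \RR^n$ is any compact set, pick $R>0$ with $K \subseteq B(0,R)$; the hypothesis $\operatorname{dist}(y_\alpha,\partial\Om)/\lambda_\alpha \to \infty$ yields $B(y_\alpha,\lambda_\alpha R)\subseteq \Om$ for all $\alpha$ large, hence $K\subseteq \Om_\alpha$. Since the prospective limit is $\RR^n$, condition (b) in the definition of set convergence is vacuous, and the conclusion $\Om_\alpha \to \RR^n$ follows.

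For case (1), I would first extract a subsequence along which the bounded sequence $r_\alpha := \operatorname{dist}(y_\alpha,\partial\Om)/\lambda_\alpha$ converges to some $r_\infty \in [0,\infty)$; by compactness of $\overline{\Om}$ and the $C^1$ regularity of $\partial\Om$, I may also extract a subsequence along which a choice of nearest boundary point $z_\alpha \in \partial\Om$ to $y_\alpha$ converges to some $z_\infty \in \partial\Om$, with corresponding outward unit normals $\nu(z_\alpha) \to \nu(z_\infty)$. Since the segment from $y_\alpha$ to $z_\alpha$ lies along the outward normal at $z_\alpha$, one has $(z_\alpha - y_\alpha)/\lambda_\alpha = r_\alpha \nu(z_\alpha) \to r_\infty \nu(z_\infty)$. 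It therefore suffices to prove that the shifted sets $\tilde\Om_\alpha := (\Om - z_\alpha)/\lambda_\alpha$ converge to the half-space $\Pi := \set{x \in \RR^n : \inner{x, \nu(z_\infty)} < 0}$, because then
\[
\Om_\alpha = \tilde\Om_\alpha + \frac{z_\alpha - y_\alpha}{\lambda_\alpha} \longrightarrow \Pi + r_\infty \nu(z_\infty),
\]
which is again a half-space. (Translation by a convergent sequence preserves the set convergence in the paper's sense, as is easily checked from the definition.)

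To prove $\tilde\Om_\alpha \to \Pi$ I would pass to local graph coordinates near $z_\infty$: after a rigid motion sending $z_\infty$ to the origin and $\nu(z_\infty)$ to $e_n$, $\Om$ agrees near the origin with $\set{(x', x_n) : x_n < \varphi(x')}$ for a $C^1$ function $\varphi$ with $\varphi(0) = 0$ and $\nabla\varphi(0) = 0$. Since $\lambda_\alpha \to 0$ and $z_\alpha \to 0$, for any bounded $x$-set the point $z_\alpha + \lambda_\alpha x$ lies in the coordinate neighborhood for all $\alpha$ large, and the condition $z_\alpha + \lambda_\alpha x \in \Om$ reads
\[
x_n < \frac{\varphi(z_\alpha' + \lambda_\alpha x') - \varphi(z_\alpha')}{\lambda_\alpha}.
\]
A Mean Value Theorem argument expresses the right-hand side as $\nabla\varphi(\xi_\alpha) \cdot x'$ for some $\xi_\alpha$ on the segment between $z_\alpha'$ and $z_\alpha' + \lambda_\alpha x'$; since $\xi_\alpha \to 0$ and $\nabla\varphi$ is continuous with $\nabla\varphi(0) = 0$, this right-hand side tends to $0$ uniformly on compact $x$-sets. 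Fixing a compact $K \subseteq \Pi$ with $\sup_{x \in K} x_n \leq -\delta_0 < 0$ then gives $K \subseteq \tilde\Om_\alpha$ for large $\alpha$, while fixing a compact $K' \subseteq \big(\overline{\Pi}\big)^\complement$ with $\inf_{x \in K'} x_n \geq \delta_0 > 0$ gives $K' \subseteq \big(\overline{\tilde\Om_\alpha}\big)^\complement$ for large $\alpha$.

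The main obstacle is this uniform $o(\lambda_\alpha)$ control of the graph deviation $\varphi(z_\alpha' + \lambda_\alpha x') - \varphi(z_\alpha')$, which is precisely what forces the $C^1$ assumption on $\partial\Om$ and why one must pivot through the boundary point $z_\alpha$ rather than working directly at $y_\alpha$ (for which no local graph representation is available when $r_\infty > 0$). Once that uniformity is established, verifying the two membership conditions in the paper's definition of set convergence is direct.
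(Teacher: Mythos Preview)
Your argument is correct and complete. The paper itself does not supply a proof of this proposition; it simply remarks that the result follows from a straightforward adaptation of El-Hamidi--V\'etois \cite[Proposition 2.4]{Vetois3}. Your proposal carries out precisely such an adaptation: the direct inclusion argument for case~(2), and for case~(1) the reduction to a boundary point via nearest-point projection followed by a local graph straightening and Mean Value Theorem estimate. This is the standard route and matches what one would find in the cited reference, so there is no meaningful divergence to report.
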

This result follows from a straightforward adaption of El-Hamidi-V\'etois \cite[Proposition 2.4]{Vetois3}. 

\begin{lem}\label{lem:convergence of functions on domains}
	Suppose $\Om_\alpha \to \Om_\infty$, with $\Om_\infty$ being either $\RR^n$ or a half space. Let $(v_\alpha)$ in $ \D^{1,p}(\RR^n)$ be a sequence converging to some $v\in \D^{1,p}(\RR^n)$ pointwise almost everywhere. If $v_\alpha \in \D^{1,p}(\Om_\alpha)$ for each $\alpha$ then $v\in \D^{1,p}(\Om_\infty)$.
\end{lem}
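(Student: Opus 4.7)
The plan is to separate the two possibilities for $\Om_\infty$. When $\Om_\infty = \RR^n$ the conclusion is immediate from the hypothesis $v \in \D^{1,p}(\RR^n)$, so the real content lies in the case where $\Om_\infty$ is a half-space $\HH$. For this case my approach has two stages: first, show that $v$ vanishes almost everywhere on $\HH^\complement$; second, invoke the classical characterization $\D^{1,p}(\HH) = \{ u \in \D^{1,p}(\RR^n) : u = 0 \text{ a.e.\ on } \HH^\complement\}$ to conclude.

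For the first stage, I would use that each $v_\alpha$, being an element of $\D^{1,p}(\Om_\alpha)$ obtained as a limit of $C_c^\infty(\Om_\alpha)$ functions, extends by zero to an element of $\D^{1,p}(\RR^n)$ that vanishes a.e.\ on $\Om_\alpha^\complement$. Now fix any compact $K \subseteq (\overline{\HH})^\complement$. Property \ref{dom:2} of the definition of $\Om_\alpha \to \HH$ gives $K \subseteq (\overline{\Om_\alpha})^\complement \subseteq \Om_\alpha^\complement$ for all $\alpha$ sufficiently large, so $v_\alpha \equiv 0$ almost everywhere on $K$ for such $\alpha$. Because $v_\alpha \to v$ pointwise a.e., this passes to the limit to yield $v \equiv 0$ a.e.\ on $K$. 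Exhausting $(\overline{\HH})^\complement$ by an increasing sequence of such compact sets gives $v = 0$ a.e.\ on $(\overline{\HH})^\complement$, and since $\partial \HH$ is an affine hyperplane of Lebesgue measure zero, we upgrade to $v = 0$ a.e.\ on all of $\HH^\complement$.

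The second stage then invokes the standard density result, obtained by translating $v$ slightly into $\HH$ (producing a function whose support sits at positive distance from $\partial \HH$, modulo a tail controlled via dominated convergence in the $\D^{1,p}$ norm) and then mollifying at a sufficiently small scale. I expect this density statement to be the only genuinely subtle point in the argument, while everything preceding it is essentially set-theoretic bookkeeping coming from the definition of domain convergence together with pointwise a.e.\ convergence of the $v_\alpha$; the fact in question is, however, classical for half-spaces and I would simply cite it.
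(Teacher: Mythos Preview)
Your proposal is correct and follows essentially the same approach as the paper: both first use property \ref{dom:2} of domain convergence together with pointwise a.e.\ convergence to show that $v$ vanishes outside $\overline{\Om_\infty}$, and then appeal to the translation-plus-mollification density argument for half-spaces. The only difference is that the paper writes out the density step in full (including an explicit spatial cutoff to obtain compact support before mollifying), whereas you would cite it as classical.
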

\begin{proof}
	If $\Om_\infty = \RR^n$ there is nothing to show. Otherwise, up to a translation, there exists a unit vector $\nu$ such that
	\[
	\Om_\infty = \set{x\in \RR^n : x\cdot \nu > 0}.
	\]
	Now, observe that $\phi_\epsilon(x) := v(x-\epsilon\nu)$ converges to $v$ in $\D^{1,p}(\RR^n)$ as $\epsilon\searrow 0$ by standard results. Therefore, if we can show that $\phi_\epsilon \in \D^{1,p}(\Om_\infty)$ for each $\epsilon > 0$ then we must have $v\in \D^{1,p}(\Om_\infty)$ which completes the proof.
	
	To this end, we first show that $v$ vanishes outside of $\overline{\Om_\infty}$ by showing that $v$ vanishes on every compact set $K$ such that $K\cap\overline{\Om_\infty} = \varnothing$. Indeed, if $K$ is compact and does not intersect $\overline{\Om_\infty}$ then, since $\Om_\alpha \to \Om_\infty$, we see that $K \cap \overline{\Om_\alpha} = \varnothing$ for all $\alpha$ large. Thus, $v_\alpha$ vanishes on $K$ for each $\alpha$ large. Since $v_\alpha \to v$ pointwise almost everywhere, it follows that $v$ vanishes on $K$. 
	
	Having shown that $v$ is supported on $\overline{\Om_\infty}$, it readily follows that, every $\phi_\epsilon$ is supported on
	\[
	\set{x\in \RR^n : x\cdot \nu \ge \epsilon}.
	\]
	Now, in order to prove that $\phi_\epsilon \in \D^{1,p}(\Om_\infty)$ we show that, for any $\delta > 0$, we may find a function in $C_c^\infty(\Om_\infty)$ whose $\D^{1,p}$-distance to $\phi_\epsilon$ is less than $\delta$. With this in mind, let $\delta > 0$ be given and set $\zeta\in C_c^\infty(\RR^n; [0,1])$ to be a smooth function of compact support with $\zeta\equiv 1$ on $B(0, 1)$. Then, for any $h_1>0$, define
	\[
	\zeta_{h_1}(x):= \zeta(h_1x)
	\]
	so that $\phi_\epsilon\zeta_{h_1} \in \D^{1,p}(\RR^n)$. By the product rule and using that $(\iota + \kappa)^p \le 2^{p-1}(\iota^p + \kappa^p)$ for all $\iota, \kappa \ge 0$,
	\begin{equation}\label{eq:cutoff}
		\norm{\phi_\epsilon - \zeta_{h_1}\phi_\epsilon}^p_{\D^{1,p}(\RR^n)}\\
		\le 2^{p-1} \int_{\abs{x}\ge 1/h_1} \abs{\nabla \phi_\epsilon}^p\d{x} + 2^{p-1} \int_{\abs{x}\ge 1/h_1}  \abs{\phi_\epsilon\nabla\zeta_{h_1}}^p\d{x}.
	\end{equation}
	The first term on the right-hand side tends to $0$ as $h_1\searrow 0$. As for the second term, by H\"older's inequality with exponent $q/p$ and conjugate exponent $n/[p(1+a-b)]$ we may write
	\begin{align*}
		\int_{\abs{x}\ge h_1}  \abs{\phi_\epsilon\nabla\zeta_{h_1}}^p\d{x}
		&\le \left(\int_{\abs{x} \ge 1/h_1} \abs{ \phi_\epsilon}^q\d{x}\right)^{p/q}\left(\int_{\abs{x}\ge 1/h_1} \abs{\nabla\zeta_{h_1}}^{\frac{n}{1+a-b}}\d{x}\right)^{p(1+a-b)/n}\\
		&=\left(\int_{\abs{x} \ge 1/h_1} \abs{ \phi_\epsilon}^q\d{x}\right)^{p/q}\left(h_1^{\frac{n}{1+a-b}-n}\int_{\abs{y}\ge 1} \abs{\nabla\zeta}^{\frac{n}{1+a-b}}\d{y}\right)^{p(1+a-b)/n}
	\end{align*}
	by a change of variables. Thus, we see that the right-hand side tends to $0$ as $h_1\searrow 0$. Assembling, we deduce that both terms on the right-hand side of equation \eqref{eq:cutoff} tend to $0$ as $h_1\searrow 0$. Thus, we may select $h_1>0$ sufficiently small so that
	\[
	\norm{\phi_\epsilon - \zeta_{h_1}\phi_\epsilon}_{\D^{1,p}(\RR^n)} < \frac{\delta}{2}.
	\]
	Now, for any $0<h \ll \epsilon$, observe that the mollification $\left(\zeta_{h_1}\phi_\epsilon\right)_h$ is in $C_c^\infty(\Om_\infty)$. Then, by standard mollification results,
	we may select $h>0$ sufficiently small so that
	\[
	\norm{\zeta_{h_1}\phi_\epsilon - (\zeta_{h_1}\phi_\epsilon)_h}_{\D^{1,p}(\RR^n)} < \frac{\delta}{2}.
	\]
	By the triangle inequality, $\norm{\phi_\epsilon-(\zeta_{h_1}\phi_\epsilon)_h}_{\D^{1,p}(\RR^n)} <\delta$ and we may conclude that $\phi_\epsilon \in \D^{1,p}(\Om_\infty)$, as desired.
\end{proof}

\section{Energy and the New Transforms}	We continue to assume that $\Omega\subset \RR^n$ denotes a bounded domain containing the origin and fix sequences $(y_\alpha)$ in $ \overline{\Om}$ and $\lambda_\alpha $ in $ (0, \infty)$. As in the previous section, the sequences \((y_\alpha), (\lambda_\alpha)\) will satisfy some combination of properties \ref{it:lambda=0}-\ref{it:concentrates in domain}.

In this section, we investigate how the new transforms \(\tau_{y, \lambda}, \tau_{y, \lambda}^-\) interact with the energy functionals \(\phi\) and \(\phi_\infty\). First, observe that if $a=b=0$, a change of variables shows that
\[
\inner{\phi_\infty^\prime(\tau_{y, \lambda}u), h} = \inner{\phi^\prime(u), \tau_{y,\lambda}^-h}.
\]
for any $h\in \D^{1,p}(\RR^n)$. More generally, when $a=b$, the difference between terms on either side of the equality will be negligible provided $\abs{y}/\lambda$ is large. This is made precise via the following result:

\begin{lem}\label{lem:transform energy} 
	If $a=b$, there exists a constant $C>0$ such that the remainder
	\[
	R:=\inner{\phi_\infty^\prime(\tau_{y, \lambda}u), h} - \inner{\phi^\prime(u), \tau_{y,\lambda}^-h}
	\]
	satisfies
	\[
	\abs{R} 
	\le C\rho\frac{\lambda}{\abs{y}}\left(\norm{u}_{\D_a^{1,p}(\Om, 0)}^{p-1}\norm{h}_{\D^{1,p}(\RR^n)} + \norm{u}_{L_b^q(\Om, 0)}^{q-1}\norm{h}_{L^q(\RR^n)}\right)
	\]
	for any $u\in \D_a^{1,p}(\Om, 0)$, $y \in \RR^n\setminus\set{0}$, $\lambda > 0$ and $h \in \D^{1,p}(\RR^n)$ with support $K$ satisfying
	\begin{equation}\label{eq:K cond}
		\rho := \sup_{x\in K}\abs{x} \le \frac{\abs{y}}{4\lambda},\qquad
		K \subseteq \frac{\Om-y}{\lambda}.
	\end{equation}
	In particular, suppose $\Om_\infty\subseteq \RR^n$ is such that \(\frac{\Om-y_\alpha}{\lambda_\alpha} =: \Om_\alpha \to \Om_\infty\). Let \((u_\alpha)\) be a bounded sequence in \(\D_a^{1,p}(\Om, 0)\) and assume property \ref{it:ratio inf} holds (i.e. \(\abs{y_\alpha}/\lambda_\alpha \to \infty\)). Then, given a fixed compact set \(K\subseteq \Om_\infty\),
	\[
	\inner{\phi_\infty^\prime(\tau_{y_\alpha, \lambda_\alpha}u_\alpha), h} - \inner{\phi^\prime(u_\alpha), \tau_{y_\alpha,\lambda_\alpha}^-h} \to 0
	\]
	uniformly over $h \in \set{\D^{1,p}(\RR^n): \supp(h) \subseteq K}$ as $\alpha\to\infty$.
\end{lem}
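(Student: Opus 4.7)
The plan is to exploit the fact that, under the support hypothesis $\rho \le \abs{y}/(4\lambda)$, the cutoffs appearing in both $\tau_{y,\lambda}u$ and $\tau_{y,\lambda}^-h$ are identically $1$ with vanishing gradient on the relevant sets (namely $K$ for the former and $\lambda K + y$ for the latter). Consequently, both pairings $\inner{\phi_\infty'(\tau_{y,\lambda}u), h}$ and $\inner{\phi'(u), \tau_{y,\lambda}^-h}$ reduce to integrals involving only the ``main'' parts of the two transforms. Applying the single change of variables $z = \lambda x + y$ to the right-hand pairing, both sides become integrals over $K$ whose integrands share the factors $\abs{\nabla u(\lambda x + y)}^{p-2}\nabla u(\lambda x + y)\cdot \nabla h(x)$ and $\abs{u(\lambda x + y)}^{q-2}u(\lambda x + y)h(x)$.

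The key algebraic identity I would verify next is that the corresponding scaling coefficients agree \emph{exactly}, provided one replaces $\abs{\lambda x + y}^{-ap}$ by $\abs{y}^{-ap}$ (respectively $\abs{\lambda x + y}^{-bq}$ by $\abs{y}^{-bq}$); this is a direct computation using the two expressions $\gamma = (n-p(1+a))/p = n/q - b$, and the hypothesis $a=b$ is what kills a residual factor $\abs{y}^{p(a-b)}$ that would otherwise obstruct the identity in the gradient term. Consequently $R$ factors as a sum of two integrals over $K$, each featuring a weighted integrand multiplied by the weight difference $\abs{y}^{-ap}-\abs{\lambda x + y}^{-ap}$ or $\abs{y}^{-bq}-\abs{\lambda x + y}^{-bq}$. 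Lemma~\ref{lem:cutoff tools} controls each such difference by $C\rho(\lambda/\abs{y})$ times the corresponding weight at $\lambda x + y$, so that changing variables back to $z$ and applying H\"older's inequality with exponents $p, p/(p-1)$ in the first term and $q, q/(q-1)$ in the second yields directly the claimed bound on $\abs{R}$ (using part~\ref{point1} of the same lemma to handle the remaining $\abs{\lambda x + y}^{-ap}$ and $\abs{\lambda x + y}^{-bq}$ factors that arise from the test-function side after transporting back to the $x$-variable).

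The uniform convergence statement then follows routinely: under hypothesis \ref{it:ratio inf} one has $\lambda_\alpha/\abs{y_\alpha} \to 0$, and since $K \subseteq \Om_\infty$ is a fixed compact set both $K \subseteq (\Om-y_\alpha)/\lambda_\alpha$ and $\rho \le \abs{y_\alpha}/(4\lambda_\alpha)$ hold for all $\alpha$ large; coupled with uniform boundedness of $(u_\alpha)$ in $\D_a^{1,p}(\Om, 0)$ and, via the CKN inequality \eqref{eq:CKNv2}, in $L^q_b(\Om, 0)$, the pointwise estimate passes uniformly over $h$ with support in $K$. The main technical obstacle is not conceptual but organizational: the cancellation of powers of $\lambda$ and $\abs{y}$ in the coefficient-matching step is tight, and it is precisely $a=b$ that makes the identities balance — which is in turn why this version of the lemma is specific to the limit case.
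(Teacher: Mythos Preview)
Your proposal is correct and follows essentially the same approach as the paper: reduce the cutoffs to $1$ via the support hypothesis, change variables so both pairings share the same integrands up to the weight discrepancy $\abs{y}^{-ap}-\abs{\lambda x+y}^{-ap}$ (resp.\ with $bq$), invoke Lemma~\ref{lem:cutoff tools} to control this discrepancy by $C\rho\lambda/\abs{y}$, and finish with H\"older. The only cosmetic difference is that the paper stays in the $z$-variable and bounds $\norm{\tau_{y,\lambda}^-h}_{\D_a^{1,p}}$ via Lemma~\ref{lem:rescale_mod_inv_bound}, whereas you work in the $x$-variable and use part~\ref{point1} of Lemma~\ref{lem:cutoff tools} to absorb the residual weight factors; these are equivalent.
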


\begin{proof}
	Let $u\in \D_a^{1,p}(\Om, 0)$, $y \in \RR^n\setminus\set{0}$, $\lambda > 0$ and suppose $h \in \D^{1,p}(\RR^n)$ has support $K$ satisfying \eqref{eq:K cond}. First, notice that
	\[
	R = \inner{\phi_\infty^\prime(\tau_{y, \lambda}u), h} - \inner{\phi^\prime(u), \tau_{y,\lambda}^-h}
	\]
	is well defined since $\lambda K + y\subseteq \Om$ implies that $\tau_{y,\lambda}^-h \in \D_a^{1,p}(\Om, 0)$. Moreover, because $\rho\le \abs{y}/(4\lambda)$, any $x\in K = \operatorname{supp}(h)$ satisfies
	\begin{equation}\label{eq:eta=1}
		\abs{\frac{2\lambda x}{\abs{y}}} \le \frac{1}{2}\implies \eta\left(\frac{2\lambda x}{\abs{y}}\right) = 1
	\end{equation}
	Using this , we see that
	\begin{align*}
		\inner{\phi_\infty^\prime(\tau_{y, \lambda}u), h}
		&=\left(\frac{\lambda}{\abs{y}}\right)^{b(p-1)}\lambda^{(\gamma+1)(p-1)}\int_{\RR^n}\abs{\nabla u(\lambda x + y)}^{p-2} \pinner{\nabla u(\lambda x + y), \nabla h(x)}\d{x}\\
		&\quad -\left(\frac{\lambda}{\abs{y}}\right)^{b(q-1)}\lambda^{\gamma(q-1)}\int_{\RR^n}\abs{ u(\lambda x + y)}^{q-2} u(\lambda x + y)  h(x)\d{x}\\
		&=\left(\frac{\lambda}{\abs{y}}\right)^{b(p-1)}\lambda^{(\gamma+1)(p-1)-n}\int_{\RR^n}\abs{\nabla u(z)}^{p-2}\pinner{\nabla u(z), \nabla h\left(\frac{z-y}{\lambda}\right)}\d{z}\\
		&\quad -\left(\frac{\lambda}{\abs{y}}\right)^{b(q-1)}\lambda^{\gamma(q-1)-n}\int_{\RR^n}\abs{ u(z)}^{q-2} u(z)  h\left(\frac{z-y}{\lambda}\right)\d{z}.
	\end{align*}
	Using equation \eqref{eq:eta=1}, but with $x=(z-y)/\lambda$, yields that
	\begin{align*}
		\inner{\phi_\infty^\prime(\tau_{y, \lambda}u), h}
		&=\left(\frac{\lambda}{\abs{y}}\right)^{bp}\lambda^{(\gamma+1)p-n}\int_{\RR^n}\abs{\nabla u(z)}^{p-2}\pinner{\nabla u(z), \nabla \left[\tau^-_{y, \lambda}h\right](z)}\d{z}\\
		&\quad -\left(\frac{\lambda}{\abs{y}}\right)^{bq}\lambda^{\gamma q-n}\int_{\RR^n}\abs{ u(z)}^{q-2} u(z)  \left[\tau^-_{y, \lambda}h\right](z)\d{z}
	\end{align*}
	where $u$ is extended by $0$ outside of $\Om$. Simplifying and re-arranging, we obtain
	\begin{align*}
		\inner{\phi_\infty^\prime(\tau_{y, \lambda}u), h}
		&=\left(\frac{\lambda}{\abs{y}}\right)^{p(b-a)}\int_{\RR^n}\abs{\nabla u(z)}^{p-2}\pinner{\nabla u(z), \nabla \left[\tau^-_{y, \lambda}h\right](z)}\abs{y}^{-ap}\d{z}\\
		&\quad -\int_{\RR^n}\abs{ u(z)}^{q-2} u(z) \left[\tau^-_{y, \lambda}h\right](z) \abs{y}^{-bq}\d{z}
	\end{align*}
	With this, observe that since $a=b$ 
	\begin{align*}
		R
		:=&\inner{\phi_\infty^\prime(\tau_{y, \lambda}u), h} - \inner{\phi^\prime(u), \tau_{y,\lambda}^-h}\\
		&=\int_{\RR^n}\abs{\nabla u(z)}^{p-2}\pinner{\nabla u(z), \nabla \left[\tau^-_{y, \lambda}h\right](z)}\left(\abs{y}^{-ap} - \abs{z}^{-ap}\right)\d{z}\\
		&\quad -\int_{\RR^n}\abs{ u(z)}^{q-2} u(z) \left[\tau^-_{y, \lambda}h\right](z) \left(\abs{y}^{-bq}-\abs{z}^{-bq}\right)\d{z}.
	\end{align*}
	Then, using Lemma \ref{lem:cutoff tools}, we see that there is a constant $C>0$ independent of $y,\lambda, u, K$ such that
	\begin{align*}
		\abs{R} 
		&\le C\epsilon \int_{\RR^n}\abs{\nabla u(z)}^{p-1}\abs{\nabla \left[\tau^-_{y, \lambda}h\right](z)}\abs{z}^{-ap}\d{z}\\
		&\quad + C\epsilon \int_{\RR^n}\abs{ u(z)}^{q-1} \abs{\left[\tau^-_{y, \lambda}h\right](z)} \abs{z}^{-bq}\d{z}\\
		&\le C\epsilon\left(\norm{u}_{\D_a^{1,p}(\Om,0)}^{p-1}\norm{\tau_{y,\lambda}^-h}_{\D_a^{1,p}(\Om,0)} + \norm{u}_{L_b^q(\Om,0)}^{q-1}\norm{\tau_{y,\lambda}^-h}_{L_b^q(\Om,0)}\right)
	\end{align*}
	where $\epsilon := \lambda\rho/\abs{y}$ and the second inequalities follows from H\"older's inequality. We note that, for the last line, we have implicitly used that $u$ and $\tau_{y,\lambda}^-h$ are in $\D_a^{1,p}(\Om, 0)$ and extended by $0$ to the rest of $\RR^n$. Then, by Lemma \ref{lem:rescale_mod_inv_bound}, with a possible modification of the constant $C>0$ we deduce that
	\[
	\abs{R} 
	\le C\epsilon\left(\norm{u}_{\D_a^{1,p}(\Om,0)}^{p-1}\norm{h}_{\D^{1,p}(\RR^n)} + \norm{u}_{L_b^q(\Om,0)}^{q-1}\norm{h}_{L^q(\RR^n)}\right)
	\]
	where we have once again used that $a=b$. Finally,the second part of the Lemma follows from this last inequality and the CKN-inequality \eqref{eq:CKN}.
\end{proof}

In this last lemma, we studied how the composition $\phi_\infty^\prime\circ \tau_{y, \lambda}$ behaves by testing against a fixed function $h$. In a similar vein, with the aims of recovering a \((PS)\)-sequence after subtracting bubbles, let us now analyze the behaviour of $\phi_\infty^\prime$ when tested against transformed functions:

\begin{lem}\label{lem:v energy transform g}
	Suppose $\Om_\alpha = (\Om-y_\alpha)/\lambda_\alpha$ tends to $\Om_\infty$, with $\Om_\infty$ being either a half-space of all of $\RR^n$. If $a=b$, \ref{it:ratio inf} holds and $v\in \D^{1,p}(\Om_\infty)$ is a critical point of $\phi_\infty$ in $\Om_\infty$, i.e. 
	\(
	\inner{\phi^\prime_\infty(v), h} = 0\) for all \(h\in \D^{1,p}(\Om_\infty),
	\) 
	then
	\[
	\inner{\phi^\prime_\infty(v), \tau_{y_\alpha, \lambda_\alpha}g} = o\left(\norm{g}_{\D_a^{1,p}(\Om,0)}\right) \quad\text{as}\quad\alpha\to\infty.
	\]
	over $g\in \D_a^{1,p}(\Om,0)$.
\end{lem}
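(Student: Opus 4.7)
Plan: The argument splits according to the two possible limiting shapes of $\Om_\infty$ given in Proposition \ref{prop:domain convergence}. In the simpler case $\Om_\infty = \RR^n$ (condition \ref{it:concentrates in domain}), the test function $\tau_{y_\alpha, \lambda_\alpha}g$ automatically lies in $\D^{1,p}(\RR^n) = \D^{1,p}(\Om_\infty)$, so criticality of $v$ in $\Om_\infty$ yields $\inner{\phi^\prime_\infty(v), \tau_{y_\alpha, \lambda_\alpha}g} = 0$ exactly, and there is nothing further to prove.

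The substantive case is when $\Om_\infty$ is a half-space (condition \ref{it:concentrates at boundary}), for which the $C^1$-regularity of $\partial\Om$ granted by Theorem \ref{thm:main a=b} is essential. The plan is to build, for each $\alpha$ sufficiently large, an ``admissible'' replacement $h_\alpha \in \D^{1,p}(\Om_\infty)$ for $\tau_{y_\alpha, \lambda_\alpha}g$ satisfying
\[
\norm{\tau_{y_\alpha, \lambda_\alpha}g - h_\alpha}_{\D^{1,p}(\RR^n)} = o(1)\cdot \norm{g}_{\D_a^{1,p}(\Om, 0)}
\]
uniformly in $g$ as $\alpha\to\infty$. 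Granting this, criticality of $v$ in $\Om_\infty$ gives $\inner{\phi^\prime_\infty(v), h_\alpha} = 0$, and since $\phi^\prime_\infty(v) \in \D^{-1, p^\prime}(\RR^n)$ is a bounded linear functional, one concludes
\[
\abs{\inner{\phi^\prime_\infty(v), \tau_{y_\alpha, \lambda_\alpha}g}} = \abs{\inner{\phi^\prime_\infty(v), \tau_{y_\alpha, \lambda_\alpha}g - h_\alpha}} \le \norm{\phi^\prime_\infty(v)}_{\D^{-1, p^\prime}(\RR^n)}\norm{\tau_{y_\alpha, \lambda_\alpha}g - h_\alpha}_{\D^{1,p}(\RR^n)} = o\big(\norm{g}_{\D_a^{1,p}(\Om, 0)}\big).
\]

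To construct $h_\alpha$, I would exploit that $\supp(\tau_{y_\alpha, \lambda_\alpha}g) \subseteq \Om_\alpha \cap B(0, \abs{y_\alpha}/(2\lambda_\alpha))$, where $\Om_\alpha := (\Om - y_\alpha)/\lambda_\alpha \to \Om_\infty$, and on every bounded subset, $\Om_\alpha$ is an arbitrarily small $C^1$-perturbation of $\Om_\infty$ for large $\alpha$. This permits the construction of diffeomorphisms $\Phi_\alpha:\RR^n\to\RR^n$ with $\Phi_\alpha(\Om_\alpha) \subseteq \Om_\infty$ and $\Phi_\alpha \to \mathrm{Id}$ in $C^1$ uniformly on compact sets. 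Setting $h_\alpha := (\tau_{y_\alpha, \lambda_\alpha}g) \circ \Phi_\alpha^{-1}$ places $\supp(h_\alpha)$ in $\Om_\infty$, hence $h_\alpha \in \D^{1,p}(\Om_\infty)$. The required $o(\norm{g})$-estimate then follows via a chain-rule computation combined with Lemma \ref{lem:rescale_mod_bound}, which in the case $a=b$ gives $\norm{\tau_{y_\alpha, \lambda_\alpha}g}_{\D^{1,p}(\RR^n)} \le C\norm{g}_{\D_a^{1,p}(\Om, 0)}$.

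The main obstacle is that $\supp(\tau_{y_\alpha, \lambda_\alpha}g)$ can have diameter as large as $\abs{y_\alpha}/\lambda_\alpha \to\infty$, so the $C^1$-convergence $\Phi_\alpha\to\mathrm{Id}$ must be quantified on a region that itself grows with $\alpha$. The saving grace is that the ``overshoot'' $\Om_\alpha\setminus\Om_\infty$ is confined to an $o(1)$-neighbourhood of $\partial\Om_\infty$ on every fixed compact set (by $C^1$-flatness of $\partial\Om$ at $\lim y_\alpha$), so the diffeomorphism need only perform a small local correction near the boundary. A cleaner alternative, avoiding diffeomorphisms altogether, is to take $h_\alpha := \chi_\alpha \tau_{y_\alpha, \lambda_\alpha}g$ for a smooth cutoff $\chi_\alpha$ supported in $\Om_\infty$ and equal to $1$ outside a thin strip about $\partial\Om_\infty$; the remainder $(1-\chi_\alpha)\tau_{y_\alpha, \lambda_\alpha}g$ is then estimated in $\D^{1,p}(\RR^n)$ by applying H\"older's inequality on this strip together with the uniform bound from Lemma \ref{lem:rescale_mod_bound}.
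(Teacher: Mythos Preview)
Your observation that the case $\Om_\infty = \RR^n$ is trivial is correct and worth noting. The difficulty is entirely in the half-space case, and there your constructive approach has a genuine gap: neither the diffeomorphism nor the cutoff construction yields an estimate that is \emph{uniform} over all $g$ in the unit ball of $\D_a^{1,p}(\Om,0)$.

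Concretely, for the cutoff version $h_\alpha = \chi_\alpha\,\tau_{y_\alpha,\lambda_\alpha}g$, the remainder involves a term of the form
\[
\int_{S_\alpha}\abs{\nabla(\tau_{y_\alpha,\lambda_\alpha}g)}^p\,\d x,
\]
where $S_\alpha$ is the thin strip near $\partial\Om_\infty$. H\"older's inequality and Lemma~\ref{lem:rescale_mod_bound} control $\norm{\tau_{y_\alpha,\lambda_\alpha}g}_{L^{p^\ast}}$ uniformly, which handles the term $\nabla\chi_\alpha\cdot\tau_{y_\alpha,\lambda_\alpha}g$; but there is no mechanism to force the \emph{gradient} of $\tau_{y_\alpha,\lambda_\alpha}g$ to be small on $S_\alpha$. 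An adversarial $g$ may concentrate all of its gradient mass in the preimage $\lambda_\alpha S_\alpha + y_\alpha$, so this integral need not be $o(1)$ uniformly in $g$. The diffeomorphism route has the same defect: the difference $\nabla f(\Phi_\alpha(\cdot)) - \nabla f(\cdot)$ requires continuity of translates in $L^p$, which gives no uniform rate over a bounded set in $\D^{1,p}$.

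The paper circumvents this by a soft compactness argument. One argues by contradiction: if the conclusion fails, normalize to obtain a sequence $(g_\alpha)$ of unit norm with $\abs{\inner{\phi_\infty'(v),\tau_{y_\alpha,\lambda_\alpha}g_\alpha}}\ge\epsilon$. By Lemma~\ref{lem:rescale_mod_bound} (with $a=b$) the sequence $(\tau_{y_\alpha,\lambda_\alpha}g_\alpha)$ is bounded in $\D^{1,p}(\RR^n)$, hence has a subsequence converging weakly and a.e.\ to some $h$. Since each $\tau_{y_\alpha,\lambda_\alpha}g_\alpha$ is supported in $\Om_\alpha$ and $\Om_\alpha\to\Om_\infty$, Lemma~\ref{lem:convergence of functions on domains} gives $h\in\D^{1,p}(\Om_\infty)$. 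Then $\inner{\phi_\infty'(v),\cdot}$ is a fixed bounded linear functional, so weak convergence yields $\inner{\phi_\infty'(v),\tau_{y_\alpha,\lambda_\alpha}g_\alpha}\to\inner{\phi_\infty'(v),h}=0$, a contradiction. The point is that compactness automatically absorbs the uniformity you were trying to build by hand.
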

\begin{proof}
	By way of contradiction, suppose the lemma is false. Then there exists $\epsilon>0$ and a subsequence such that, for each $\alpha$, we may select $g_\alpha \in \D_a^{1,p}(\Om ,0)$ satisfying
	\[
	\abs{\inner{\phi_\infty^\prime(v), \tau_{y_\alpha, \lambda_\alpha}g_\alpha}} \ge \epsilon\norm{g_\alpha}_{\D_a^{1,p}(\Om,0)}
	\]
	Relabeling each $g_\alpha$ to be the rescaled $g_\alpha/\norm{g_\alpha}_{\D_a^{1,p}(\Om,0)}$, we obtain
	\begin{equation}\label{eq:3}
		\abs{\inner{\phi_\infty^\prime(v), \tau_{y_\alpha, \lambda_\alpha}g_\alpha}} \ge \epsilon
	\end{equation}
	for all $\alpha$ where $g_\alpha \in \D_a^{1,p}(\Om ,0)$ has unit norm.
	
	On the other hand, since $a=b$, it follows from Lemma \ref{lem:rescale_mod_bound} that $\tau_{y_\alpha, \lambda_\alpha}g_\alpha$ is a bounded sequence in $\D^{1,p}(\RR^n)$. Therefore, a subsequence of $(\tau_{y_\alpha, \lambda_\alpha}g_\alpha)$ converges weakly in $\D^{1,p}(\RR^n)$ and pointwise almost everywhere to some function $h\in \D^{1,p}(\RR^n)$. By Lemma \ref{lem:convergence of functions on domains}, in fact, $h\in \D^{1,p}(\Om_\infty)$. Then, by weak convergence and using that $v$ is a critical point of $\phi_\infty$ in $\Om_\infty$, we have
	\[
	\inner{\phi_\infty^\prime(v), \tau_{y_\alpha, \lambda_\alpha}g_\alpha} \to \inner{\phi_\infty^\prime(v), h} = 0
	\] 
	However, this contradict equation \eqref{eq:3}.
\end{proof}

\subsection{Locality} Due to the nature of the transform \(\tau_{y, \lambda}\), restricting domains may have negligible effect on norms (recall Lemma \ref{lem:away from ball}). In relation to the energy functional \(\phi_\infty\), a similar phenomenon can be observed:
\begin{lem}\label{lem:concentration limiting energy}
	Fix $v\in \D^{1,p}(\RR^n)$. If $a=b$ and property \ref{it:ratio inf} holds (i.e. \(\abs{y_\alpha}/\lambda_\alpha \to \infty\)) then 
	\[
	\begin{aligned}
		R_g(y_\alpha, \lambda_\alpha)
		:=\inner{\phi_\infty^\prime(v), \tau_{y_\alpha, \lambda_\alpha}g} 
		-\Bigg(& \int_{\abs{x} \le \frac{\abs{y_\alpha}}{4\lambda_\alpha}}\abs{\nabla v}^{p-2}\pinner{\nabla v,\nabla\left[\tau_{y_\alpha, \lambda_\alpha}g\right]}\left(\frac{\abs{\lambda_\alpha x + y_\alpha}}{\abs{y_\alpha}}\right)^{-ap}\d{x}\\
		&\qquad - \int_{\abs{x} \le \frac{\abs{y_\alpha}}{4\lambda_\alpha}}\abs{v}^{q-2} v\left[\tau_{y_\alpha, \lambda_\alpha}g\right]\left(\frac{\abs{\lambda_\alpha x + y_\alpha}}{\abs{y_\alpha}}\right)^{-bq}\d{x}\Bigg)
	\end{aligned}
	\]
	tends to \(0\) uniformly over $g\in \D_a^{1,p}(\Om,0)$ as $\alpha \to\infty$.
\end{lem}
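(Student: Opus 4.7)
The plan is to expand $\inner{\phi_\infty^\prime(v), \tau_{y_\alpha,\lambda_\alpha}g}$ via the definition of $\phi_\infty$ and exploit the fact that $\tau_{y_\alpha,\lambda_\alpha}g$ has support contained in $\set{\abs{x}\le \abs{y_\alpha}/(2\lambda_\alpha)}$, since $\eta$ is supported in the unit ball. I read the ``uniformly over $g$'' assertion as $R_g(y_\alpha,\lambda_\alpha) = o(\norm{g}_{\D_a^{1,p}(\Om,0)})$ with the little-$o$ independent of $g$. Throughout I would use that $q = p^\ast$ when $a=b$, so $v \in L^q(\RR^n)$ by the Sobolev inequality, together with the operator bounds
\[
\norm{\nabla[\tau_{y_\alpha,\lambda_\alpha}g]}_{L^p(\RR^n)} + \norm{\tau_{y_\alpha,\lambda_\alpha}g}_{L^q(\RR^n)} \le C\norm{g}_{\D_a^{1,p}(\Om,0)}
\]
furnished by Lemma \ref{lem:rescale_mod_bound} (note that the factor $(\lambda_\alpha/\abs{y_\alpha})^{b-a}$ is $1$ under the hypothesis $a=b$) combined with the CKN inequality.

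Writing $B_\alpha := \set{\abs{x}\le \abs{y_\alpha}/(4\lambda_\alpha)}$ and $A_\alpha := \set{\abs{y_\alpha}/(4\lambda_\alpha) < \abs{x}\le \abs{y_\alpha}/(2\lambda_\alpha)}$, the support restriction decomposes
\[
R_g(y_\alpha,\lambda_\alpha) = I_p^{A_\alpha} - I_q^{A_\alpha} + E_p^{B_\alpha} - E_q^{B_\alpha},
\]
where the \emph{annular} terms $I_p^{A_\alpha}$, $I_q^{A_\alpha}$ are the integrals of $\abs{\nabla v}^{p-2}\pinner{\nabla v,\nabla[\tau_{y_\alpha,\lambda_\alpha}g]}$ and $\abs{v}^{q-2}v[\tau_{y_\alpha,\lambda_\alpha}g]$ over $A_\alpha$ (without weight factor), and the \emph{inner error} terms are
\[
E_p^{B_\alpha} = \int_{B_\alpha} \abs{\nabla v}^{p-2}\pinner{\nabla v,\nabla[\tau_{y_\alpha,\lambda_\alpha}g]}\left(1 - \left(\tfrac{\abs{\lambda_\alpha x + y_\alpha}}{\abs{y_\alpha}}\right)^{-ap}\right)\d{x}
\]
together with the analogous $E_q^{B_\alpha}$ featuring $-bq$. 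The first step is to control the annular integrals: Hölder's inequality and the operator bounds yield $\abs{I_p^{A_\alpha}} \le C\norm{\nabla v}_{L^p(A_\alpha)}^{p-1}\norm{g}_{\D_a^{1,p}(\Om,0)}$ and $\abs{I_q^{A_\alpha}} \le C\norm{v}_{L^q(A_\alpha)}^{q-1}\norm{g}_{\D_a^{1,p}(\Om,0)}$, and since $A_\alpha$ escapes to infinity when $\abs{y_\alpha}/\lambda_\alpha\to\infty$ by \ref{it:ratio inf}, dominated convergence applied to $\abs{\nabla v}^p \mathbf{1}_{A_\alpha}$ and $\abs{v}^q \mathbf{1}_{A_\alpha}$ gives the desired decay.

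The second step handles $E_p^{B_\alpha}$ (and identically $E_q^{B_\alpha}$). For a parameter $M>0$ I would split $B_\alpha$ at radius $M$. On $\set{\abs{x}\le M}$, once $\alpha$ is so large that $4\lambda_\alpha M \le \abs{y_\alpha}$, Lemma \ref{lem:cutoff tools}\ref{point2} applied with $\rho = M$ gives the pointwise bound $\abs{1 - (\abs{\lambda_\alpha x+y_\alpha}/\abs{y_\alpha})^{-ap}} \le CM\lambda_\alpha/\abs{y_\alpha}$, so Hölder yields a contribution $CM(\lambda_\alpha/\abs{y_\alpha})\norm{\nabla v}_{L^p(\RR^n)}^{p-1}\norm{g}_{\D_a^{1,p}(\Om,0)}$ that vanishes with $\alpha$ for each fixed $M$. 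On the outer part $\set{M < \abs{x} \le \abs{y_\alpha}/(4\lambda_\alpha)}$, Lemma \ref{lem:cutoff tools}\ref{point2} applied with $\rho = \abs{y_\alpha}/(4\lambda_\alpha)$ yields only a crude $O(1)$ pointwise bound on the weight factor, so Hölder produces a bound $C\norm{\nabla v}_{L^p(\set{\abs{x}>M})}^{p-1}\norm{g}_{\D_a^{1,p}(\Om,0)}$, made arbitrarily small by choosing $M$ large since $\nabla v \in L^p(\RR^n)$.

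The hard part is the two-scale character of this argument: Lemma \ref{lem:cutoff tools}\ref{point2} provides pointwise smallness of the weight-deviation factor only on sets bounded uniformly in $\alpha$, whereas $B_\alpha$ itself grows unboundedly. One is thus forced into an $\varepsilon/2$-diagonalisation where $M$ is chosen first (to suppress the $L^p$-tail of $\nabla v$ and the $L^q$-tail of $v$) and only afterwards is $\alpha$ sent to $\infty$. Since every estimate above depends on $g$ exclusively through $\norm{g}_{\D_a^{1,p}(\Om,0)}$, this delivers the uniform $o(\norm{g}_{\D_a^{1,p}(\Om,0)})$ conclusion.
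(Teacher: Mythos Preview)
Your proof is correct and follows essentially the same decomposition as the paper's: an annular piece handled by H\"older plus dominated convergence, and an inner weight-deviation piece handled via Lemma~\ref{lem:cutoff tools}\ref{point2}. The only technical difference is that the paper splits the inner region at the $\alpha$-dependent radius $\sqrt{\abs{y_\alpha}/\lambda_\alpha}$ (so that $\rho\lambda_\alpha/\abs{y_\alpha} = \sqrt{\lambda_\alpha/\abs{y_\alpha}} \to 0$ directly), thereby avoiding the $\varepsilon/2$-diagonalisation over a fixed $M$ that you carry out; both variants are valid.
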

\begin{proof}
	For any $g\in \D_a^{1,p}(\Om,0)$, observe that
	\begin{align*}
		R_g(y_\alpha, \lambda_\alpha)
		&=\int_{\abs{x} \le \frac{\abs{y_\alpha}}{4\lambda_\alpha}}\abs{\nabla v}^{p-2}\pinner{\nabla v,\nabla\left[\tau_{y_\alpha, \lambda_\alpha}g\right]}\left(\frac{\abs{y_\alpha}^{-ap} - \abs{\lambda_\alpha x + y_\alpha}^{-ap}}{\abs{y_\alpha}^{-ap}}\right)\d{x}\\
		&\quad+\int_{\abs{x} > \frac{\abs{y_\alpha}}{4\lambda_\alpha}}\abs{\nabla v}^{p-2}\pinner{\nabla v,\nabla\left[\tau_{y_\alpha, \lambda_\alpha}g\right]}\d{x}\\
		&\qquad - \int_{\abs{x} \le \frac{\abs{y_\alpha}}{4\lambda_\alpha}}\abs{v}^{q-2} v\left[\tau_{y_\alpha, \lambda_\alpha}g\right]\left(\frac{\abs{y_\alpha}^{-bq} - \abs{\lambda_\alpha x + y_\alpha}^{-bq}}{\abs{y_\alpha}^{-bq}}\right)\d{x}\\
		&\qquad\quad - \int_{\abs{x} > \frac{\abs{y_\alpha}}{4\lambda_\alpha}}\abs{v}^{q-2} v\left[\tau_{y_\alpha, \lambda_\alpha}g\right]\d{x}\\
		&=: I_{a}^{(1)} + I_a^{(2)} - I_b^{(1)} - I_b^{(2)}.
	\end{align*}
	For all \(\alpha\) sufficiently large (so that$\abs{y_\alpha}/\lambda_\alpha > 16$), we may split
	\begin{align*}
		I_a^{(1)}
		&=\int_{\abs{x} \le \sqrt{\frac{\abs{y_\alpha}}{\lambda_\alpha}}}\abs{\nabla v}^{p-2}\pinner{\nabla v,\nabla\left[\tau_{y_\alpha, \lambda_\alpha}g\right]}\left(\frac{\abs{y_\alpha}^{-ap} - \abs{\lambda_\alpha x + y_\alpha}^{-ap}}{\abs{y_\alpha}^{-ap}}\right)\d{x}\\
		&\quad + \int_{\sqrt{\frac{\abs{y_\alpha}}{\lambda_\alpha}}<\abs{x} \le \frac{\abs{y_\alpha}}{4\lambda_\alpha}}\abs{\nabla v}^{p-2}\pinner{\nabla v,\nabla\left[\tau_{y_\alpha, \lambda_\alpha}g\right]}\left(\frac{\abs{y_\alpha}^{-ap} - \abs{\lambda_\alpha x + y_\alpha}^{-ap}}{\abs{y_\alpha}^{-ap}}\right)\d{x}\\
		&=: I_a^{(1.1)} + I_a^{(1.2)}
	\end{align*}
	It follows from Lemma \ref{lem:cutoff tools}-\ref{point2} with $K = B\left(0, \sqrt{\abs{y_\alpha}/\lambda_\alpha}\right)$ that, for a constant $C>0$ independent of $\alpha$, there holds
	\begin{align*}
		\abs{I_a^{(1.1)}} 
		&\le C\sqrt{\frac{\abs{y_\alpha}}{\lambda_\alpha}}\frac{\lambda_\alpha}{\abs{y_\alpha}}\int_{\abs{x} \le \sqrt{\frac{\abs{y_\alpha}}{\lambda_\alpha}}}\abs{\nabla v}^{p-1}\abs{\nabla\left[\tau_{y_\alpha, \lambda_\alpha}g\right]}\d{x}\\
		&\le C\sqrt{\frac{\lambda_\alpha}{\abs{y_\alpha}}}\norm{\tau_{y_\alpha, \lambda_\alpha}g}_{\D^{1,p}(\RR^n)}\left(\int_{\abs{x} \le \sqrt{\frac{\abs{y_\alpha}}{\lambda_\alpha}}}\abs{\nabla v}^{p}\d{x}\right)^{(p-1)/p}.
	\end{align*}
	by H\"older's inequality. Applying Lemma \ref{lem:rescale_mod_bound} with $a=b$ and using that $v\in \D^{1,p}(\RR^n)$ is fixed we have, up to a modification of the constant $C$,
	\begin{equation*}
		\abs{I_a^{(1.1)}} \le C\sqrt{\frac{\lambda_\alpha}{\abs{y_\alpha}}}\norm{g}_{\D^{1,p}_a(\Om)} = o\left(\norm{g}_{\D^{1,p}_a(\Om,0)}\right)
	\end{equation*}
	as $\alpha \to \infty$. By the same procedure applied to $I_a^{(1.2)}$ we instead obtain
	\begin{align*}
		\abs{I_a^{(1.2)}} 
		&\le C\norm{g}_{\D^{1,p}_a(\Om)}\left(\int_{\sqrt{\frac{\abs{y_\alpha}}{\lambda_\alpha}}<\abs{x} \le \frac{\abs{y_\alpha}}{4\lambda_\alpha}}\abs{\nabla v}^{p}\d{x}\right)^{(p-1)/p}\\
		&= o\left(\norm{g}_{\D^{1,p}_a(\Om)}\right)
	\end{align*}
	by the dominated convergence theorem. Combining, we deduce that
	\begin{equation}\label{eq:I_a^1}
		I_a^{(1)} = o\left(\norm{g}_{\D^{1,p}_a(\Om)}\right)
	\end{equation}
	Similarly, using also the CKN inequality \eqref{eq:CKN}, we have
	\begin{equation}\label{eq:I_b^1}
		\abs{I_b^{(1)}}  = o\left(\norm{g}_{\D^{1,p}_a(\Om)}\right)
	\end{equation}
	Moreover, using H\"older's inequality, Lemma \ref{lem:rescale_mod_bound}, and the CKN inequality \eqref{eq:CKN}, we also have
	\[
	I_a^{(2)} + I_b^{(2)} = o\left(\norm{g}_{\D_a^{1,p}(\Om)}\right)\qquad \text{as } \alpha \to \infty.
	\]
	by the dominated convergence theorem. Combining the last three equations concludes our proof.
\end{proof}

\section{Iterative results and Solution Extraction}
Maintaining the notation of the previous section, $\Om\subset \RR^n$ denotes a bounded domain containing the origin and \(y_\alpha\) in \(\overline{\Om}\), \(\lambda_\alpha\) in \((0, \infty)\) are sequences that will be assumed to satisfy some combination of properties \ref{it:lambda=0}-\ref{it:concentrates in domain}.

\subsection{Extracting a Solution}

To establish Theorems \ref{thm:main a<b}-\ref{thm:main a=b}, the first step is to extract a (possibly trivial) solution to \eqref{eq:probOm} from a given Palais-Smale sequence. We recall that $(u_\alpha)$ in $ \D_a^{1,p}(\Om,0)$ is called a $(PS)$-sequence for problem \eqref{eq:probOm} provided it has bounded energy (meaning $\phi(u_\alpha)$ is bounded) and $\phi^\prime(u_\alpha)\to 0$ strongly in $\D_a^{-1, p^\prime}(\Om, 0)$. The following result borrows ideas from the unweighted proof presented by Mercuri-Willem \cite[Lemma 3.5]{mercuri-willem}.

\begin{lem}\label{lem:extraction}
	Let $(u_\alpha)$ in $\D^{1,p}_a(\Om, 0)$ be a Palais-Smale sequence for problem \eqref{eq:probOm} such that,
	\begin{enumerate}[label=(\roman*)]
		\item $u_\alpha \rightharpoonup u$ in $\D^{1,p}_a(\Om, 0)$;
		\item $u_\alpha \to u$ a.e. on $\Om$;
		\item $\phi(u_\alpha) \to c$
	\end{enumerate}
	as $\alpha \to \infty$. Passing to a subsequence if necessary, there holds
	\begin{enumerate}[label=(\alph*)]
		\item\label{it:esa} $\nabla u_\alpha \to \nabla u$ a.e. on $\Om$ and $\phi^\prime(u) = 0$;
		\item\label{it:esb} $\norm{u_\alpha}^p_{\D^{1,p}_a(\Om, 0)} - \norm{u_\alpha - u}^p_{\D^{1,p}_a(\Om, 0)} = \norm{u}^p_{\D^{1,p}_a(\Om, 0)} + o(1)$;
		\item\label{it:esc} $\phi(u_\alpha - u) \to c - \phi(u)$ as $\alpha \to \infty$.
		\item\label{it:esd} $\phi^\prime(u_\alpha-u)\to 0$ strongly in  $\D^{-1,p^\prime}_a(\Om, 0)$.
	\end{enumerate}
\end{lem}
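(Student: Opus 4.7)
The plan is to establish (a) first and then derive (b)--(d) as consequences of the Br\'ezis--Lieb lemma together with the machinery of Corollary \ref{cor:gradientConvergence}. The key identifications are that $u$ must be a weak solution and that the residual sequence $u_\alpha - u$ inherits the Palais--Smale property.

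First I would derive a.e.\ convergence of the gradients by verifying hypothesis \eqref{eq:lemGradient1} of Proposition \ref{prop:gradientConvergence}. The natural test function is $T(u_\alpha - u)$, which is bounded in $\D_a^{1,p}(\Om,0)$ (since $T$ is $1$-Lipschitz with $T(0)=0$) and converges weakly to $0$ (since $u_\alpha \to u$ a.e.). Exploiting that $\phi^\prime(u_\alpha)\to 0$ in $\D_a^{-1,p^\prime}(\Om,0)$, testing against $T(u_\alpha-u)$ gives
\[
\int_\Om \abs{\nabla u_\alpha}^{p-2}\pinner{\nabla u_\alpha, \nabla T(u_\alpha-u)} \abs{x}^{-ap}\d{x}
-\int_\Om \abs{u_\alpha}^{q-2}u_\alpha\, T(u_\alpha-u)\, \abs{x}^{-bq}\d{x} = o(1).
\]
Choosing an exhaustion $(U_k)$ of $\Om$ (staying uniformly away from the origin when $a>0$), the Rellich-type embedding in $\D_a^{1,p}$ yields strong convergence $\abs{u_\alpha}^{q-2}u_\alpha \to \abs{u}^{q-2}u$ in $L^{q/(q-1)}_{b}(U_k,0)$, while $\abs{T(u_\alpha-u)}\le 1$ and $T(u_\alpha-u)\to 0$ a.e.; together with equi-integrability of $\abs{u_\alpha}^{q-1}$ outside $U_k$, this forces the second integral to vanish. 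The weak convergence $T(u_\alpha-u)\rightharpoonup 0$ simultaneously kills the $\abs{\nabla u}^{p-2}\nabla u$ subtraction required in \eqref{eq:lemGradient1}. Proposition \ref{prop:gradientConvergence} then supplies a subsequence $(u_\beta)$ with $\nabla u_\beta \to \nabla u$ a.e.

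Given this pointwise convergence, $\phi^\prime(u) = 0$ follows by testing against $h \in C_c^\infty(\Om)$: the sequences $\abs{\nabla u_\beta}^{p-2}\nabla u_\beta$ and $\abs{u_\beta}^{q-2}u_\beta$ are bounded in $L^{p^\prime}_a$ and $L^{q/(q-1)}_b$ respectively and converge a.e., so $\inner{\phi^\prime(u_\beta), h}\to\inner{\phi^\prime(u), h}$, while the left-hand side tends to $0$; density finishes (a). Part \ref{it:esb} is then precisely Corollary \ref{cor:gradientConvergence}-(2). Part \ref{it:esc} combines \ref{it:esb} with the Br\'ezis--Lieb lemma applied in $L^q_b(\Om,0)$, namely $\norm{u_\beta}_{L^q_b}^q - \norm{u_\beta-u}_{L^q_b}^q \to \norm{u}_{L^q_b}^q$, to obtain $\phi(u_\beta-u)\to c-\phi(u)$.

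For \ref{it:esd}, I would decompose, for $h\in\D_a^{1,p}(\Om,0)$,
\[
\inner{\phi^\prime(u_\beta-u),h} = \inner{\phi^\prime(u_\beta), h} - \inner{\phi^\prime(u), h} + R_\beta(h),
\]
where $R_\beta(h)$ collects the nonlinear Br\'ezis--Lieb mismatches for the $p$- and $q$-terms. By parts (3) and (4) of Corollary \ref{cor:gradientConvergence} these mismatches converge \emph{strongly} to $0$ in $L^{p^\prime}_a$ and $L^{q/(q-1)}_b$ respectively, so H\"older's inequality yields $\norm{R_\beta}_{\D_a^{-1,p^\prime}}\to 0$. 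Combined with $\phi^\prime(u_\beta)\to 0$ and $\phi^\prime(u)=0$ from Step 1--2, this gives \ref{it:esd}. The main obstacle is the verification of \eqref{eq:lemGradient1}: the weight $\abs{x}^{-ap}$ is not uniformly bounded near the origin, so Rellich compactness cannot be invoked globally, and it requires a careful localization-plus-equi-integrability argument to control both the weighted tail and the behaviour near $0$ in tandem.
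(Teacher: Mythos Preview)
Your overall architecture is the same as the paper's: test $\phi'(u_\alpha)$ against $T(u_\alpha-u)$, show the relevant integral vanishes, invoke Proposition~\ref{prop:gradientConvergence} and Corollary~\ref{cor:gradientConvergence}, then read off (b)--(d) exactly as you describe. The decomposition for (d) and the use of Corollary~\ref{cor:gradientConvergence}(3)--(4) to kill the Br\'ezis--Lieb mismatch are identical to the paper's argument.

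Where you diverge is in handling the lower-order term $\int_\Om |u_\alpha|^{q-2}u_\alpha\,T(u_\alpha-u)\,|x|^{-bq}\,\d x$, and here you make life much harder than necessary. You propose localizing to sets $U_k$ away from the origin, invoking Rellich to get $|u_\alpha|^{q-2}u_\alpha\to|u|^{q-2}u$ strongly in $L^{q/(q-1)}_b(U_k)$, and then patching with an equi-integrability argument. This is fragile: when $a=b$ one has $q=p^\ast$, and Rellich does \emph{not} give strong $L^q$ convergence even locally away from the weight. The paper avoids all of this with a one-line observation you nearly make yourself: since $|T(u_\alpha-u)|\le 1$, $T(u_\alpha-u)\to 0$ a.e., and $|x|^{-bq}\in L^1(\Om)$ (because $bq<n$ and $\Om$ is bounded), dominated convergence gives $\|T(u_\alpha-u)\|_{L^q_b(\Om,0)}\to 0$ directly on the \emph{whole} of $\Om$. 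H\"older's inequality against the bounded sequence $\|u_\alpha\|_{L^q_b}^{q-1}$ then kills the integral globally. With this, one may simply take $U_k=\Om$ in Proposition~\ref{prop:gradientConvergence}, and the ``main obstacle'' you flag at the end---controlling the weight near the origin---does not arise at all.
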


\begin{proof}

	Let us define \(T\) by equation the equation \eqref{eq:TDef}.
	Using that \((u_\alpha)\) is a bounded sequence in \(\D_a^{1,p}(\Om, 0)\), we see that \((T(u_\alpha -u))\) is also a bounded sequence in \(\D_a^{1,p}(\Om, 0)\). In particular, up to a subsequence, \((T(u_\alpha -u))\) converges weakly in \(\D_a^{1,p}(\Om, 0)\) and pointwise almost everywhere to some function in \(\D_a^{1,p}(\Om, 0)\). Since \(u_\alpha \to u\) pointwise a.e., this function must be the trivial function. That is, \(T(u_\alpha -u) \rightharpoonup 0\) weakly in \(\D_a^{1,p}(\Om, 0)\) and pointwise almost everywhere.
	
	With this, let us write
	\begin{align*}
		\Gamma_\alpha
		&:=\int_{\Om} \pinner{\left[\abs{\nabla u_\alpha}^{p-2} \nabla u_\alpha - \abs{\nabla u}^{p-2} \nabla u\right], \nabla T(u_\alpha - u)}\abs{x}^{-ap}\d{x}\\
		&= \inner{\phi^\prime(u_\alpha), T(u_\alpha-u)} + \int_{\Om} \pinner{\abs{u_\alpha}^{q-2} u_\alpha, T(u_\alpha - u)}\abs{x}^{-bq}\d{x}\\
		&\qquad - \int_{\Om} \pinner{\abs{\nabla u}^{p-2} \nabla u, \nabla T(u_\alpha - u)}\abs{x}^{-ap}\d{x}.
	\end{align*} 
	Since \(\phi^\prime(u_\alpha)\to 0\) strongly in the dual of \(\D_a^{1,p}(\Om, 0)\) and \((T(u_\alpha -u))\) is bounded in \(\D_a^{1,p}(\Om, 0)\), the first term after the last equality tends to \(0\). Next, observe that, \(\norm{T(u_\alpha-u)}_{L_b^q(\Om, 0)} \to 0\) by the dominated convergence theorem and, by the CKN-inequality, \((u_\alpha)\) in \( L_b^q(\Om, 0)\) is a bounded sequence. Therefore, an application of H\"older's inequality shows that the second term also tends to \(0\). Finally, the third term tends to \(0\) since \((T(u_\alpha -u)) \rightharpoonup 0\) weakly in \(\D_a^{1,p}(\Om, 0)\).
	
	We conclude that \(\Gamma_\alpha \to 0\) as \(\alpha\to\infty\). Thus, we may apply Corollary \ref{cor:gradientConvergence} to see that, up to a subsequence,
	\begin{enumerate}
		\item $\nabla u_\alpha \to \nabla u$ almost everywhere on $\Om$,
		\item $\norm{u_\alpha}_{\D_a^{1,p}(\Om, 0)}^p - \norm{u_\alpha -u}_{\D_a^{1,p}(\Om, 0)}^p = \norm{u}_{\D_a^{1,p}(\Om ,0)}^p + o(1)$,
		\item\label{conc:es3} and
		\[
		\abs{\nabla u_\alpha}^{p-2}\nabla u_\alpha - \abs{\nabla u_\alpha - \nabla u}^{p-2}\left(\nabla u_\alpha - \nabla u\right) \to \abs{\nabla u}^{p-2}\nabla u
		\]
		strongly in $L^{p^\prime}(\Om, \abs{x}^{-ap})$, where $p^\prime = p/(p-1)$,
		\item\label{conc:es4} and
		\[
		\abs{u_\alpha}^{q-2} u_\alpha - \abs{ u_\alpha -  u}^{q-2}\left( u_\alpha -  u\right) \to \abs{ u}^{q-2} u
		\]
		strongly in $L^{q/(q-1)}(\Om, \abs{x}^{-bq})$.
	\end{enumerate}
	as $\alpha \to \infty$. In particular, we have established \ref{it:esa} and \ref{it:esb}.
	
	Conclusion \ref{it:esc} follows from the Br\'ezis-Lieb lemma. Indeed,
	\begin{align*}
		\phi(u_\alpha-u) 
		&= \frac{1}{p} \norm{u_\alpha - u}^p_{\D^{1,p}_a(\Om, 0)} - \frac{1}{q} \norm{u_\alpha - u}^q_{L^q(\Om, \abs{x}^{-bq})}\\
		&= \phi(u_\alpha) - \phi(u) + o(1)\\
		& \to c - \phi(u).
	\end{align*}
	
	Next, let $h \in \D^{1,p}_a(\Om, 0)$ be arbitrary. By H\"older's inequality followed by \eqref{conc:es3} and \eqref{conc:es4}, we have
	\[
	\inner{\phi^\prime(u_\alpha-u) - \phi^\prime(u_\alpha) + \phi^\prime(u), h} = o\left(\norm{h}_{ \D^{1,p}_a(\Om, 0)}\right)
	\]
	as $\alpha\to \infty$. On the other hand, by standard convergence results in Lebesgue spaces,
	\[
	\inner{\phi^\prime(u), h} =\lim_{\alpha\to\infty} \inner{\phi^\prime(u_\alpha), h} = 0
	\]
	where the second equality holds since $\phi^\prime(u_\alpha) \to 0$ strongly in $\D^{-1,p^\prime}_a(\Om, 0)$. Combining the last two equations, we see tha
	\[
	\inner{\phi^\prime(u_\alpha-u), h} = \inner{\phi^\prime(u_\alpha), h} + o\left(\norm{h}_{ \D^{1,p}_a(\Om, 0)}\right)
	\]
	as $\alpha\to \infty$. Since $\phi^\prime(u_\alpha) \to 0$ strongly in $\D^{1,p}_a(\Om, 0)$, we deduce \ref{it:esd}.
\end{proof}

\subsection{Iterative Results}

The proof of the main results, namely Theorems \ref{thm:main a<b}-\ref{thm:main a=b}, is iterative. Therefore, central to the proof of the main results of this paper are two iteration results which will enable us to proceed to the next iteration stage.  

The first handles weighted bubbles and, in the main theorem, will be used provided condition \ref{it:ratio x_0} holds. In this case, we introduce notation for a translated analog of problem \eqref{eq:probLim},
\begin{equation}\label{eq:probLimX0}
	\begin{cases}
		-\operatorname{div}\left(\abs{x+x_0}^{-ap}\abs{\nabla u}^{p-2}\nabla u\right) = \abs{x+x_0}^{-bq}\abs{u}^{q-2}u &\text{in }\RR^n\\
		u\in \D_a^{1,p}\left(\RR^n, -x_0\right).
	\end{cases}
\end{equation}
In relation, we define the associated energy functional $\phi_{x_0, \infty} : \D_a^{1,p}(\RR^n, -x_0) \to \RR$ given by
\[
\phi_{x_0, \infty}(u) := \int_{\RR^n} \abs{\nabla u}^p\abs{z+x_0}^{-ap}\d{z} - \int_{\RR^n}\abs{u}^q\abs{z+x_0}^{-bq}\d{z},
\]
We note that a function $v\in \D_a^{1,p}(\RR^n, -x_0)$ is a (weak) solution to \eqref{eq:probLimX0} provided $\phi_{x_0, \infty}^\prime(v) = 0$.

The second iteration result treats unweighted bubbling phenomena in the case \ref{it:ratio inf} holds. We note that the proofs of the iterative results are very similar. However, due to the presence of a non-homogeneous transform, the latter presents more technical challenges. 

\begin{prop}\label{prop:iteration msc}
	Let $(u_\alpha)$ in $ \D_a^{1,p}(\Om, 0)$ be a bounded sequence and fix some $x_0\in \RR^n$ along with \(v\in \D_a^{1,p}(\RR^n, -x_0)\). For each $\alpha\in\NN$, put
	\[
	v_\alpha(x) := \lambda_\alpha^\gamma u_\alpha\left(\lambda_\alpha(x+x_0)\right)
	\]
	and suppose that
	\begin{enumerate}[label = (\roman*)]
		\item $\lambda_\alpha \to 0$, i.e. \ref{it:lambda=0} holds,
		\item $\phi(u_\alpha)\to c$ and $\phi^\prime(u_\alpha) \to 0$ strongly in the dual of $\D_a^{1,p}(\Om,0)$,
		\item $v_\alpha \rightharpoonup v$ weakly in $\D_a^{1,p}(\RR^n, -x_0)$ and pointwise almost everywhere,
	\end{enumerate}
	as $\alpha\to\infty$. Then, passing to a subsequence if necessary, $\nabla v_\alpha \to \nabla v$ almost everywhere and $\phi_{x_0, \infty}^\prime(v) = 0$. Finally, the sequence in $\D_a^{1,p}(\RR^n, 0)$ given by
	\[
	w_\alpha(z) := u_\alpha(z) -  \lambda_\alpha^{-\gamma} v\left(\frac{z}{\lambda_\alpha}-x_0\right)
	\]
	satisfies
	\begin{enumerate}[label = (\alph*)]
		\item\label{it:prop iteration a msc} $\norm{w_\alpha}_{\D_a^{1,p}(\RR^n, 0)}^p = \norm{u_\alpha}_{\D_a^{1,p}(\RR^n, 0)}^p - \norm{v}_{\D_a^{1,p}(\RR^n, -x_0)}^p + o(1)$,
		\item\label{it:prop iteration b msc} $\phi_{0, \infty}(w_\alpha) \to c - \phi_{x_0, \infty}(v)$,
		\item\label{it:prop iteration c msc} $\phi^\prime_{0, \infty}(w_\alpha) \to 0$ strongly in the dual of $\D_a^{1,p}(\Om,0)$,
	\end{enumerate}
	as $\alpha \to\infty$.
\end{prop}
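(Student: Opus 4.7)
\emph{Strategy via rescaling invariance.} The map $u \mapsto \lambda^\gamma u(\lambda(\cdot + x_0))$ is, by a direct change of variables, a linear isometry from $\D_a^{1,p}(\Om, 0)$ onto $\D_a^{1,p}(\Om_\alpha, -x_0)$ (with $\Om_\alpha := \Om/\lambda_\alpha - x_0$) which also preserves the $L^q_b$-norm and intertwines $\phi$ with $\phi_{x_0, \infty}$ on the corresponding scales. The key observation is that $\lambda_\alpha^\gamma w_\alpha(\lambda_\alpha(\cdot + x_0)) = v_\alpha - v$, so every assertion about $w_\alpha$ translates into a corresponding assertion about $v_\alpha - v$ in $\D_a^{1,p}(\RR^n, -x_0)$. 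My plan is therefore to first establish the a.e.\ gradient convergence $\nabla v_\alpha \to \nabla v$ and the critical-point property $\phi_{x_0, \infty}'(v) = 0$ by appealing to Proposition \ref{prop:gradientConvergence}, and then to invoke the Br\'ezis-Lieb-type machinery of Corollary \ref{cor:gradientConvergence} to obtain \ref{it:prop iteration a msc}--\ref{it:prop iteration c msc}.

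\emph{Step 1: gradient convergence and $\phi_{x_0, \infty}'(v) = 0$.} On the exhaustion $U_k := B(0, k)$ of $\RR^n$ I verify hypothesis \eqref{eq:lemGradient1} for $(v_\alpha)$. Fix $k$ and $\zeta_k \in C_c^\infty(\RR^n)$ equal to $1$ on $U_k$; by Proposition \ref{prop:domain convergence x_0}, $\supp(\zeta_k) \subset \Om_\alpha$ for all large $\alpha$, so the un-rescaled function $\lambda_\alpha^{-\gamma}(\zeta_k T(v_\alpha - v))(\cdot/\lambda_\alpha - x_0)$ lies in $\D_a^{1,p}(\Om, 0)$ with norm uniformly bounded in $\alpha$ (since $(v_\alpha)$ is bounded and $|T| \le 1$). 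Pairing with $\phi'(u_\alpha) \to 0$ and changing variables yields $\langle \phi_{x_0, \infty}'(v_\alpha), \zeta_k T(v_\alpha - v) \rangle = o(1)$. Expanding $\nabla(\zeta_k T(v_\alpha - v))$ via the product rule, the ``lower-order'' terms involving $T(v_\alpha - v)\nabla\zeta_k$ and $|v_\alpha|^{q-2}v_\alpha \zeta_k T(v_\alpha - v)$ vanish by dominated convergence on $\supp \zeta_k$ (using $|T| \le 1$ and $T(v_\alpha - v) \to 0$ a.e.) combined with H\"older's inequality, while the $|\nabla v|^{p-2}\nabla v$ contribution is killed by the weak convergence $T(v_\alpha - v) \rightharpoonup 0$ in $\D_a^{1,p}(\RR^n, -x_0)$. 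This delivers \eqref{eq:lemGradient1}. An identical cutoff-and-rescale argument applied to $h \in C_c^\infty(\RR^n)$ (without $T$) gives $\langle \phi_{x_0, \infty}'(v_\alpha), h \rangle \to 0$; the a.e.\ gradient convergence promotes $|\nabla v_\alpha|^{p-2}\nabla v_\alpha$ to weak convergence in $L^{p'}_a$ (and similarly for $|v_\alpha|^{q-2}v_\alpha$ in $L^{q'}_b$), so the limit equals $\langle \phi_{x_0, \infty}'(v), h \rangle$, yielding $\phi_{x_0, \infty}'(v) = 0$ by density.

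\emph{Step 2: decompositions.} Items \ref{it:prop iteration a msc} and \ref{it:prop iteration b msc} follow immediately from the isometry and the Br\'ezis-Lieb machinery applied to $(v_\alpha)$: the $\D_a^{1,p}$ splitting is Corollary \ref{cor:gradientConvergence}(2), the $L^q_b$ splitting is the classical Br\'ezis-Lieb lemma (applicable since $v_\alpha \to v$ a.e.\ and is bounded in $L^q_b$), and combining these with $\phi(u_\alpha) = \phi_{x_0, \infty}(v_\alpha) \to c$ yields \ref{it:prop iteration b msc}. For \ref{it:prop iteration c msc}, given $h \in \D_a^{1,p}(\Om, 0)$ of unit norm, set $\tilde h(x) := \lambda_\alpha^\gamma h(\lambda_\alpha(x + x_0))$ so that, by isometry, $\langle \phi_{0, \infty}'(w_\alpha), h \rangle = \langle \phi_{x_0, \infty}'(v_\alpha - v), \tilde h \rangle$. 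Items (3)--(4) of Corollary \ref{cor:gradientConvergence} together with H\"older's inequality decompose the right-hand side as
\[
\langle \phi_{x_0, \infty}'(v_\alpha), \tilde h \rangle - \langle \phi_{x_0, \infty}'(v), \tilde h \rangle + o\!\left(\norm{\tilde h}_{\D_a^{1,p}(\RR^n, -x_0)}\right).
\]
The middle term vanishes by Step 1, the first equals $\langle \phi'(u_\alpha), h \rangle$ via the inverse rescaling (whose image is supported in $\Om_\alpha \supset \supp \tilde h$), hence is $o(\norm{h})$ uniformly as $\phi'(u_\alpha) \to 0$ in $\D_a^{-1,p^\prime}(\Om, 0)$, and the error is $o(\norm{h})$ by isometry. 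I expect the technical crux of the argument to lie in Step 1, where the cutoff $\zeta_k$ is indispensable for bridging the $(PS)$-information at the $u_\alpha$-scale to the quantity of interest at the $v_\alpha$-scale while keeping uniform control of the product-rule errors as $\lambda_\alpha \to 0$.
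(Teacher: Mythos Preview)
Your proposal is correct and follows essentially the same route as the paper's proof: both establish $\phi'_{x_0,\infty}(v_\alpha)\to 0$ on each $B_k$ by rescaling test functions back to the $u_\alpha$-scale, verify the hypothesis of Proposition~\ref{prop:gradientConvergence} by testing against a cutoff times $T(v_\alpha-v)$, and then derive (a)--(c) from Corollary~\ref{cor:gradientConvergence} via the rescaling isometry. Your treatment of the $|\nabla v|^{p-2}\nabla v$ term via the weak convergence $T(v_\alpha-v)\rightharpoonup 0$ is slightly more streamlined than the paper's three-term splitting of $I_\alpha^{(3)}$, but this is a cosmetic difference rather than a distinct strategy.
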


When $a<b$ in the problem parameters, this last proposition is the only iterative result required to establish the conclusions of Theorem \ref{thm:main a<b}. In the general setting, we shall further require the following:

\begin{prop}\label{prop:iteration}
	Suppose we are given a bounded sequence $(u_\alpha)$ in $\D_a^{1,p}(\Omega,0)$ along with a set $\Om_\infty$, which is either $\RR^n$ or a half-space, and $v\in \D^{1,p}(\RR^n)$ such that the following hold as \(\alpha\to\infty\):
	\begin{enumerate}[label = (\roman*)]
		\item\label{it:prop iteration i} $\phi(u_\alpha)\to c$ and $\phi^\prime(u_\alpha) \to 0$ strongly in the dual of $\D_a^{1,p}(\Om,0)$,
		\item\label{it:prop iteration ii} $\Om_\alpha = (\Om-y_\alpha)/\lambda_\alpha$ tends to $\Om_\infty$,
		\item\label{it:prop iteration iii} $v_\alpha \rightharpoonup v$ weakly in $\D^{1,p}(\RR^n)$ and pointwise almost everywhere,
	\end{enumerate}
	where $v_\alpha := \tau_{y_\alpha, \lambda_\alpha}u_\alpha$. 
	If $a=b$ (so \(q = p^\ast\)) and \ref{it:ratio inf} holds (i.e. $\abs{y_\alpha}/\lambda_\alpha \to\infty$) then, $v\in \D^{1,p}(\Om_\infty)$ and, up to a subsequence, $\nabla v_\alpha \to \nabla v$ almost everywhere. Moreover, $\phi^\prime_\infty(v) = 0$ in $\Om_\infty$; i.e.
	\[
	\inner{\phi_\infty^\prime(v), h} = 0\qquad\forall h\in \D^{1,p}(\Om_\infty).
	\]
	Finally, $w_\alpha := u_\alpha - \tau_{y_\alpha, \lambda_\alpha}^- v$ satisfies
	\begin{enumerate}[label = (\alph*)]
		\item\label{it:prop iteration a} $\norm{w_\alpha}_{\D_a^{1,p}(\RR^n,0)}^p = \norm{u_\alpha}_{\D_a^{1,p}(\Omega,0)}^p - \norm{v}_{\D^{1,p}(\RR^n)}^p + o(1)$,
		\item\label{it:prop iteration b} $\phi_{0, \infty}(w_\alpha) \to c - \phi_\infty(v)$,
		\item\label{it:prop iteration c} $\phi^\prime_{0, \infty}(w_\alpha) \to 0$ strongly in the dual of $\D_a^{1,p}(\Om,0)$.
	\end{enumerate}
	as $\alpha \to\infty$.
\end{prop}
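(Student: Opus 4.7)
The plan is to mirror the argument of Proposition \ref{prop:iteration msc}, but in the rescaled frame, relying on the non-homogeneous transforms $\tau_{y,\lambda}$ and $\tau^-_{y,\lambda}$. The hypothesis $a=b$ is essential: it forces the operator-norm factor $(\lambda/\abs{y})^{p(b-a)}$ in Lemmas \ref{lem:rescale_mod_bound}--\ref{lem:rescale_mod_inv_bound} to equal one, so that the operator norms of $\tau_{y_\alpha,\lambda_\alpha}$ and $\tau^-_{y_\alpha,\lambda_\alpha}$ stay uniformly bounded. The proof naturally splits into three phases: identify $v$ as a weak solution in $\Om_\infty$, upgrade weak convergence of the gradients to pointwise convergence, and then establish the norm, energy and dual decompositions for $w_\alpha$.

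That $v \in \D^{1,p}(\Om_\infty)$ will follow from the observation that each $v_\alpha = \tau_{y_\alpha,\lambda_\alpha} u_\alpha$ is supported in $\Om_\alpha \cap B(0, \abs{y_\alpha}/(2\lambda_\alpha))$ by construction of $\tau$, together with the hypothesis $\abs{y_\alpha}/\lambda_\alpha \to \infty$ and an application of Lemma \ref{lem:convergence of functions on domains}. For the pointwise gradient convergence, I would apply Proposition \ref{prop:gradientConvergence} on the \emph{unweighted} space $\D^{1,p}(\Om_\infty)$ with an exhaustion $(U_k)$ of $\Om_\infty$. Fixing $k$ and a cutoff $\zeta_k \in C_c^\infty(\Om_\infty)$ equal to one on $\overline{U_k}$, I would test $\phi^\prime_\infty(v_\alpha)$ against $\zeta_k T(v_\alpha - v)$; by Lemma \ref{lem:transform energy} and the uniform bound of Lemma \ref{lem:rescale_mod_inv_bound} at $a=b$, this equals $\inner{\phi^\prime(u_\alpha),\, \tau^-_{y_\alpha,\lambda_\alpha}(\zeta_k T(v_\alpha - v))} + o(1)$, which vanishes because $\phi^\prime(u_\alpha) \to 0$ strongly. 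Expanding the left side by the product rule, the $\nabla \zeta_k$ piece and the $L^{p^\ast}$ piece die in the limit (using $T(v_\alpha - v) \rightharpoonup 0$ and dominated convergence on $\operatorname{supp}\zeta_k$), leaving exactly the integral condition required by Proposition \ref{prop:gradientConvergence} on $U_k$. A diagonal extraction then yields $\nabla v_\alpha \to \nabla v$ almost everywhere on $\Om_\infty$. The identity $\phi^\prime_\infty(v) = 0$ in $\Om_\infty$ now follows: for $h \in C_c^\infty(\Om_\infty)$, Lemma \ref{lem:transform energy} gives $\inner{\phi^\prime_\infty(v_\alpha), h} = \inner{\phi^\prime(u_\alpha), \tau^-_{y_\alpha,\lambda_\alpha} h} + o(1) \to 0$, while the a.e.\ convergences upgrade the limit to $\inner{\phi^\prime_\infty(v), h}$.

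For the decompositions \ref{it:prop iteration a} and \ref{it:prop iteration b}, Br\'ezis-Lieb applied to $v_\alpha \to v$ in $\D^{1,p}(\RR^n)$ (using the gradient convergence just established) and its $L^{p^\ast}$ analog yield the split identities at the rescaled level. To transfer these back to the weighted norms of $u_\alpha$ and $w_\alpha$, I split $\RR^n$ into the near region $B(y_\alpha, \abs{y_\alpha}/2)$, which contains the support of $\tilde v_\alpha := \tau^-_{y_\alpha,\lambda_\alpha} v$, and its complement. Outside, $w_\alpha$ coincides with $u_\alpha$, so there is nothing to prove. Inside, the change of variables $z = \lambda_\alpha x + y_\alpha$ converts the weighted integrals to the rescaled ones, and Lemma \ref{lem:cutoff tools} guarantees that the factor $(\abs{y_\alpha}/\abs{\lambda_\alpha x + y_\alpha})^{ap}$ tends to one uniformly on each fixed compact set; Lemma \ref{lem:away from ball} controls the tails of $\tilde v_\alpha$ in the transition annulus. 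Assembling delivers \ref{it:prop iteration a}, and combining with the $L^q_b$-analog yields \ref{it:prop iteration b}.

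The main obstacle is \ref{it:prop iteration c}, namely that $\phi^\prime_{0,\infty}(w_\alpha) \to 0$ in the dual \emph{uniformly} over $h \in \D_a^{1,p}(\RR^n, 0)$. My plan is to split $\inner{\phi^\prime_{0,\infty}(w_\alpha), h}$ via a cutoff into contributions from the far region $\RR^n \setminus B(y_\alpha, \abs{y_\alpha}/4)$ and the near region. On the far part, $w_\alpha = u_\alpha$ up to a remainder controlled by Lemma \ref{lem:away from ball}, so the contribution is $o(\norm{h})$ because $\phi^\prime(u_\alpha) \to 0$ strongly. On the near part, I push $h$ to the rescaled frame via $\tau_{y_\alpha, \lambda_\alpha}$; combining Lemma \ref{lem:transform energy}, Lemma \ref{lem:concentration limiting energy} and Lemma \ref{lem:v energy transform g} applied to the critical point $v$ produced above gives the required $o(\norm{h}_{\D_a^{1,p}(\Om, 0)})$ bound. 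The delicate point is that all these estimates must hold uniformly in $h$; this is exactly what the uniform-in-$h$ remainder bounds in those three lemmas were designed for, and it is the place where the restriction $a=b$ pays off most decisively.
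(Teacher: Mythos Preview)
Your proposal is correct and follows essentially the same route as the paper: establish $v\in\D^{1,p}(\Om_\infty)$ via Lemma \ref{lem:convergence of functions on domains}, show $\phi_\infty'(v_\alpha)\to 0$ on compact sets by Lemma \ref{lem:transform energy} plus the uniform operator bound of Lemma \ref{lem:rescale_mod_inv_bound} at $a=b$, feed this into the cutoff--$T(v_\alpha-v)$ argument to trigger Corollary \ref{cor:gradientConvergence}, and then run the near/far ($B(y_\alpha,\abs{y_\alpha}/4)$ versus its complement) splitting to obtain \ref{it:prop iteration a}--\ref{it:prop iteration c}. The only organizational difference is that the paper packages your near/far computations for the norms and for the derivative into two standalone lemmas (Lemmas \ref{lem:w_alpha norms} and \ref{lem:energies every term}), and in the latter makes explicit use of the $L^{p'}$ and $L^{q'}$ convergences (items (3)--(4) of Corollary \ref{cor:gradientConvergence}) rather than only the pointwise gradient convergence; you should make sure to invoke these when handling the near-region contribution to \ref{it:prop iteration c}.
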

Notice that the inference
\(
\phi^\prime_\infty(v) = 0
\)
in $\Om_\infty$ amounts to stating that $v$ solves a PDE in $\Om_\infty$. Specifically, if $\Om_\infty = \RR^n$, then $v$ solves \eqref{eq:probLimWeightless}, whilst $v$ satisfies   \eqref{eq:probLimWeightlessHalf}  if $\Om_\infty = \HH$.

\begin{proof}[Proof of Proposition \ref{prop:iteration msc}]
	We adapt the argument from Mercuri-Willem \cite{mercuri-willem}. Given $k \in \NN$, let $B_k$ denote the open ball $B(0,k) \subset \RR^n$. As a first step, we claim that $\phi^\prime_{x_0,\infty}(v_\alpha) \to 0$ strongly in $\D^{-1,p^\prime}_a(B_k,-x_0)$ as $\alpha \to \infty$. Indeed, fix $h \in C_c^\infty(B_k)$ and define
	\[
	h_\alpha(z) :=  \lambda_\alpha^{-\gamma}h\left(\frac{z}{\lambda_\alpha} - x_0\right)
	\]
	Observe that, since $h_\alpha$ is supported in $\lambda_\alpha B(x_0,k)$, for all \(\alpha\) large $h_\alpha \in C_c^\infty(\Om)$. Now, by a change of variables,
	\[
	\inner{\phi_{x_0,\infty}^\prime(v_\alpha),h} = \inner{\phi^\prime(u_\alpha),h_\alpha}
	\]
	for all \(\alpha\) large (so that the right-hand side is defined). In particular, using homogeneity, we see that
	\begin{align*}
		\abs{\inner{\phi_{x_0,\infty}^\prime(v_\alpha),h}} = \abs{\inner{\phi^\prime(u_\alpha),h_\alpha}}
		&\leq \norm{\phi^\prime(u_\alpha)}_{\D^{-1,p^\prime}_a(\Om, 0)} \norm{h_\alpha}_{\D^{1,p}_a(\Om, 0)}\\
		&=\norm{\phi^\prime(u_\alpha)}_{\D^{-1,p^\prime}_a(\Om, 0)} \norm{h}_{\D^{1,p}(B_k, -x_0)}.
	\end{align*}
	so $\phi^\prime_{x_0,\infty}(v_\alpha) \to 0$ strongly in $\D^{-1,p^\prime}_a(B_k,-x_0)$. Using this, we will extract a subsequence such that $\nabla v_\alpha \to \nabla v$ pointwise on $\RR^n$, almost everywhere. Let $\rho \in C_c^\infty(\RR^n)$ be a bump function such that
	\[
	\begin{cases}
		0 \leq \rho \leq 1 & \text{on } \RR^n,\\
		\rho \equiv 1 & \text{in } B_k,\\
		\rho \equiv 0 & \text{outside } B_{k+1}.
	\end{cases}
	\]
	Consider the vector-valued map
	\[
	f_\alpha := \abs{\nabla v_\alpha}^{p-2}\nabla v_\alpha - \abs{\nabla v}^{p-2}\nabla v,
	\]
	which satisfies $f_\alpha \cdot (\nabla v_\alpha - \nabla v) \geq 0$ almost everywhere (see Szulkin-Willem \cite{Szulkin-Willem}). Let $T$ be as in \eqref{eq:TDef}. For each fixed $k \in \NN$, an easy computation shows that
	\begin{align*}
		0&\le
		\int_{B_k} \pinner{f_\alpha, \nabla \left[T(v_\alpha - v)\right]} \abs{x+x_0}^{-ap}\d{x}\\
		&\leq \int_{\RR^n} \pinner{f_\alpha, \rho \nabla \left[T(v_\alpha - v)\right]} \abs{x+x_0}^{-ap}\d{x}\\
		&= \inner{\phi^\prime_{x_0,\infty}(v_\alpha), \rho T(v_\alpha-v)} + \underbrace{\int_{\RR^n} \abs{v_\alpha}^{q-2}v_\alpha \rho T(v_\alpha -v)\abs{x+x_0}^{-bq}\d{x}}_{=:I_\alpha^{(1)}}
		\\&\quad
		- \underbrace{\int_{\RR^n} \pinner{f_\alpha, T(v_\alpha-v)\nabla \rho}\abs{x+x_0}^{-ap}\d{x}}_{=: I_\alpha^{(2)}}
		- \underbrace{\int_{\RR^n} \abs{\nabla v}^{p-2}\left(\nabla v, \nabla \left[\rho T(v_\alpha-v)\right]\right)\abs{x+x_0}^{-ap}\d{x}}_{=:I_\alpha^{(3)}}.
	\end{align*}
	First, since \((v_\alpha)\) is a bounded sequence in \(\D_a^{1,p}(\RR^n, -x_0)\), we see that \((T(v_\alpha-v))\) is also a bounded sequence in \(\D_a^{1,p}(\RR^n, -x_0)\). Becasue \(\rho\in C_c^\infty(B_{k+1})\), it follows that \((\rho T(v_\alpha-v))\) is a bounded sequence in \(\D_a^{1,p}(B_{k+1}, -x_0)\). Since $\phi^\prime_{x_0,\infty}(v_\alpha) \to 0$ strongly in $\D^{-1,p^\prime}_a(B_{k+1},-x_0)$, we deduce that 
	\[
	\inner{\phi^\prime_{x_0,\infty}(v_\alpha), \rho T(v_\alpha-v)} \to 0
	\]
	as \(\alpha\to\infty\).
	
	Next, by the CKN inequality, \((v_\alpha)\) is also a bounded sequence in \(L_b^q(\RR^n, -x_0)\). Therefore, by H\"older's inequality followed by the dominated convergence theorem, we see that  \(I_\alpha^{(2)} \to 0\) as \(\alpha \to \infty\).
	
	Next, using again that \((v_\alpha)\) is a bounded sequence in \(\D_a^{1,p}(\RR^n, -x_0)\), we see that the sequence \((f_\alpha)\) is bounded in \(L^{p^\prime}(\RR^n, \abs{x+x_0}^{-ap})\). Therefore, using also H\"older's inequality and the fact that $\operatorname{supp}(\rho) \subseteq B_{k+1}$, it is easily seen that
	\[
	\abs{I_\alpha^{(2)}} \leq C \left( \int_{B_{k+1}} \abs{T(v_\alpha-v)}^p\abs{x+x_0}^{-ap}\d{x} \right)^{1/p}
	\]
	for some constant \(C>0\). By the Dominated Convergence Theorem, we conclude that the right-hand side tends to \(0\) so \(I_\alpha^{(2)} \to 0\) as \(\alpha\to\infty\). 
	
	Finally, observe that we can decompose
	\begin{align*}
		I_\alpha^{(3)}
		&= \int_{\RR^n} \abs{\nabla v}^{p-2}\pinner{\nabla v, \nabla \rho} T(v_\alpha-v)\abs{x+x_0}^{-ap}\d{x}\\
		&\quad + \int_{\RR^n} \rho \abs{\nabla v}^{p-2}\pinner{\nabla v, \nabla\left[v_\alpha -v \right]} \abs{x+x_0}^{-ap}\d{x}\\
		&\qquad - \int_{\left\{\abs{v_\alpha -v } > 1\right\}} \rho \abs{\nabla v}^{p-2}\pinner{\nabla v, \nabla\left[v_\alpha -v \right]}\abs{x+x_0}^{-ap}\d{x}.
	\end{align*}
	On the right-hand side, the first term converges to by H\"older's inequality and the Dominated Convergence Theorem. The second integral converges to zero since \(v_\alpha\rightharpoonup v\) weakly in \(\D_a^{1,p}(\RR^n, -x_0)\). Finally, the third integral also converges to zero by H\"older's inequality and the dominated convergence theorem. Indeed, in absolute value, the third integral is bounded above by
	\[
	\norm{v_\alpha - v}_{\D_a^{1,p}(\RR^n, -x_0)}\left(\int_{\left\{\abs{v_\alpha -v } > 1\right\}} \abs{\nabla v}^p\abs{x+x_0}^{-ap}\d{x}\right)^{(p-1)/p} \to 0.
	\]
	Therefore, we have also found that \(I_\alpha^{(3)} \to 0\) as \(\alpha\to\infty\).
	
	Combining our work, we conclude that
	\[
	\lim_{\alpha \to \infty} \int_{B_k} \pinner{\left[\abs{\nabla v_\alpha}^{p-2}\nabla v_\alpha - \abs{\nabla v}^{p-2}\nabla v\right], \nabla \left[T(v_\alpha - v)\right]} \abs{x+x_0}^{-ap}\d{x} = 0
	\]
	for each $k \in \NN$. Citing Proposition \ref{prop:gradientConvergence}, we may assume after passing to a subsequence that $\nabla v_\alpha \to \nabla v$ pointwise almost everywhere on $\RR^n$. Therefore, using also that \(v_\alpha\to v\) almost everywhere, it follows from standard convergence results that
	\[
	\inner{\phi^\prime_{x_0,\infty}(v),h} = \lim_{\alpha \to \infty} \inner{\phi^\prime_{x_0,\infty}(v_\alpha),h}
	\]
	for any \(h\in C_c^\infty(\RR^n)\). Since \(h\) must be supported in \(B_k\) for some \(k\) and  $\phi^\prime_{x_0,\infty}(v_\alpha) \to 0$ strongly in $\D^{-1,p^\prime}_a(B_k)$, we conclude that
	\[
	\inner{\phi^\prime_{x_0,\infty}(v),h} = 0
	\]
	Since \(h\in C_c^\infty(\RR^n)\) was arbitrary, we conclude that $\phi_{x_0,\infty}^\prime(v) = 0$.

	By Corollary \ref{cor:gradientConvergence}, up to a subsequence, we have
	\begin{enumerate}[label=(\arabic*)]
		\item\label{eq:lem2Brezis} $\norm{v_\alpha - v}^p_{\D^{1,p}_a(\RR^n,-x_0)} = \norm{v_\alpha}^p_{\D^{1,p}_a(\RR^n,-x_0)} -\norm{v}^p_{\D^{1,p}_a(\RR^n,-x_0)} + o(1)$;	
		\item\label{eq:lem2Alves} and $$\abs{\nabla v_\alpha}^{p-2}\nabla v - \abs{\nabla v_\alpha - \nabla v}^{p-2}(\nabla v_\alpha - \nabla v) \to \abs{\nabla v}^{p-2}\nabla v$$
		in $L^{p^\prime}(\RR^n, \abs{x+x_0}^{-ap})$;
		\item\label{eq:lem2Alves bq} and $$\abs{v_\alpha}^{q-2}v_\alpha - \abs{v_\alpha - v}^{q-2}(v_\alpha - v) \to \abs{v}^{q-2}u$$ strongly in $L^{\frac{q}{q-1}}(\RR^n, \abs{x+x_0}^{-bq})$
	\end{enumerate}
	as \(\alpha\to\infty\).
	
	By a simple change of variables, it is clear that \ref{eq:lem2Brezis} implies \ref{it:prop iteration a msc} directly. Next, using the same change of variables, we see that
	\[
	\phi_{0,\infty}(w_\alpha) = \phi_{x_0,\infty}(v_\alpha - v)
	\]
	Then, the Br\'ezis-Lieb lemma ensures that
	\begin{align*}
		\phi_{0,\infty}(w_\alpha) 
		= \phi_{x_0,\infty}(v_\alpha) - \phi_{x_0,\infty}(v) + o(1)
		&= \phi(u_\alpha) -\phi_{x_0,\infty}(v) + o(1)\\
		&= c -\phi_{x_0,\infty}(v) + o(1)
	\end{align*}
	where the second equality follows from another change of variables. Thus, we have established  \ref{it:prop iteration b msc}.
	
	Finally, in order to prove claim \ref{it:prop iteration c msc} we fix an arbitrary $g\in \D^{1,p}_a(\Om, 0)$. After rescaling, we see that
	\[
	\inner{\phi^\prime_{0,\infty}(w_\alpha), g} = \inner{\phi^\prime_{x_0,\infty}(v_\alpha - v), g_\alpha}
	\]
	where $g_\alpha(x) = \lambda_\alpha^{\gamma}g(\lambda_\alpha(x+x_0))$. Then, applying H\"older's inequality together with \ref{eq:lem2Alves} and a change of variables,
	\begin{align*}
		\inner{\phi^\prime_{0,\infty}(w_\alpha), g} &= \inner{\phi^\prime_{x_0,\infty}(v_\alpha-v), g_\alpha}\\
		&= \inner{\phi^\prime_{x_0,\infty}(v_\alpha), g_\alpha} - \inner{\phi^\prime_{x_0,\infty}(v), g_\alpha} +  o\left(\norm{g}_{\D^{1,p}_a(\Om, 0)}\right)\\
		&= \inner{\phi^\prime(u_\alpha),g} + o\left(\norm{g}_{\D^{1,p}_a(\Om, 0)}\right)
	\end{align*}
	as \(\alpha \to\infty\), over$g\in \D^{1,p}_a(\Om, 0)$.  We infer that $\phi^\prime_{0,\infty}(w_\alpha) \to 0$ strongly in $\D^{-1,p^\prime}_a(\Om, 0)$ and the proof is complete.
\end{proof}

We now turn towards results involved in the proof of Proposition \ref{prop:iteration}, beginning with the following lemma.

\begin{lem}\label{lem:w_alpha norms}
	Suppose $(u_\alpha)$ in $\D_a^{1,p}(\Omega,0)$ is a given bounded sequence and there is $v\in \D^{1,p}(\RR^n)$ such that
	\begin{enumerate}[label = (\roman*)]
		\item $\tau_{y_\alpha, \lambda_\alpha}u_\alpha \rightharpoonup v$ weakly in $\D^{1,p}(\RR^n)$ and pointwise almost everywhere.
		\item $\nabla\left[\tau_{y_\alpha, \lambda_\alpha}u_\alpha\right] \to \nabla v$ almost everywhere in $\RR^n$.
	\end{enumerate}
	If $a=b$ (so \(q = p^\ast\)) and \ref{it:ratio inf} holds (i.e. $\abs{y_\alpha}/\lambda_\alpha \to\infty$)  then $w_\alpha := u_\alpha - \tau_{y_\alpha, \lambda_\alpha}^- v$ satisfies
	\begin{enumerate}[label = (\alph*)]
		\item $\norm{w_\alpha}_{\D_a^{1,p}(\RR^n,0)}^p = \norm{u_\alpha}_{\D_a^{1,p}(\Omega,0)}^p - \norm{v}_{\D^{1,p}(\RR^n)}^p + o(1)$ and
		\item $\norm{w_\alpha}_{L_b^q(\RR^n,0)}^q = \norm{u_\alpha}_{L_b^q(\Omega,0)}^q - \norm{v}_{L^q(\RR^n)}^q + o(1)$
	\end{enumerate}
	as $\alpha \to\infty$
\end{lem}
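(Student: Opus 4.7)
The plan is to reduce (a) and (b) to the Br\'ezis--Lieb lemma applied to the rescaled sequence $\widetilde{u}_\alpha := \tau_{y_\alpha,\lambda_\alpha} u_\alpha$, which by Lemma \ref{lem:rescale_mod_bound} (and $a=b$) is bounded in $\mathcal{D}^{1,p}(\RR^n)$ and which by hypothesis converges to $v$ together with its gradient almost everywhere. First I would split $\RR^n$ into the inner region $B_\alpha := B(y_\alpha, \abs{y_\alpha}/4)$ (where the cutoff $\eta(2(\cdot - y_\alpha)/\abs{y_\alpha})$ appearing in the definition of $\tau^-_{y_\alpha,\lambda_\alpha}v$ is identically $1$) and its complement. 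On the complement, Lemma \ref{lem:away from ball} (using $a=b$ and $\abs{y_\alpha}/\lambda_\alpha \to \infty$) gives that $\norm{\nabla [\tau^-_{y_\alpha,\lambda_\alpha}v]}_{L^p_a}$ and $\norm{\tau^-_{y_\alpha,\lambda_\alpha}v}_{L^q_b}$ on this outer region vanish as $\alpha \to \infty$; combining this with the elementary inequality $\abs{\abs{A}^s - \abs{A-B}^s} \le C(\abs{A}^{s-1}\abs{B} + \abs{B}^s)$, H\"older's inequality, and the boundedness of $(u_\alpha)$ in $\mathcal{D}^{1,p}_a$ (hence, by CKN, in $L^q_b$), I obtain that on the complement the weighted norms of $w_\alpha$ and $u_\alpha$ coincide up to $o(1)$, for both $s=p$ and $s=q$.

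On the inner region $B_\alpha$, since $\eta \equiv 1$ there, the change of variables $z = \lambda_\alpha x + y_\alpha$ together with the scaling identities $(\gamma+1)p = n-bp$ and $\gamma q = n-bq$ (where the hypothesis $a=b$ is essential for the residual powers of $\abs{y_\alpha}$ to cancel out) yields
\[
\norm{\nabla u_\alpha}^p_{L^p_a(B_\alpha,0)} = \int_{\abs{x} \le R_\alpha} \abs{\nabla \widetilde{u}_\alpha}^p W^{(a)}_\alpha \,\d{x}, \qquad \norm{\nabla w_\alpha}^p_{L^p_a(B_\alpha,0)} = \int_{\abs{x} \le R_\alpha} \abs{\nabla \widetilde{u}_\alpha - \nabla v}^p W^{(a)}_\alpha \,\d{x},
\]
with $R_\alpha := \abs{y_\alpha}/(4\lambda_\alpha) \to \infty$ and $W^{(a)}_\alpha(x) := (\abs{\lambda_\alpha x + y_\alpha}/\abs{y_\alpha})^{-ap}$; the analogous formulas for the $L^q_b$-norms hold with weight $W^{(b)}_\alpha := (\abs{\lambda_\alpha x + y_\alpha}/\abs{y_\alpha})^{-bq}$. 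To dispose of the weight, I would fix $M > 0$ and split $\{\abs{x} \le R_\alpha\}$ into $B(0,M)$ and the annulus $\{M < \abs{x} \le R_\alpha\}$. On $B(0,M)$, Lemma \ref{lem:cutoff tools}\ref{point2} gives $W^{(a)}_\alpha \to 1$ uniformly as $\alpha\to\infty$, and the classical Br\'ezis--Lieb lemma on $B(0,M)$ (applicable because of a.e. convergence and $L^p$-boundedness of $\nabla \widetilde{u}_\alpha$) produces
\[
\int_{B(0,M)}\bigl(\abs{\nabla \widetilde{u}_\alpha}^p - \abs{\nabla \widetilde{u}_\alpha - \nabla v}^p\bigr)\,\d{x} \;\longrightarrow\; \int_{B(0,M)}\abs{\nabla v}^p\,\d{x}.
\]
On the annulus, Lemma \ref{lem:cutoff tools}\ref{point1} shows $W^{(a)}_\alpha$ is uniformly bounded, and the elementary inequality together with H\"older's inequality controls the analogous difference by $C(\norm{\nabla v}_{L^p(\abs{x}>M)} + \norm{\nabla v}^p_{L^p(\abs{x}>M)})$, which vanishes as $M \to \infty$ uniformly in $\alpha$. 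A standard $\varepsilon/3$-argument (letting $\alpha\to\infty$ first, then $M\to\infty$) then closes (a). The assertion (b) is identical, with $p$ replaced by $q$ and the gradient replaced by the function itself, invoking the embedding $\mathcal{D}^{1,p}(\RR^n) \hookrightarrow L^{p^\ast}(\RR^n) = L^q(\RR^n)$ (available since $a=b$) to guarantee the tail smallness of $v$ in $L^q$.

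The main obstacle is the genuinely non-constant nature of the weights $\abs{z}^{-ap}$ and $\abs{z}^{-bq}$, which prevents a direct application of Br\'ezis--Lieb. The resolution hinges on the fact that $\tau^-_{y_\alpha,\lambda_\alpha} v$ is compactly supported near $y_\alpha$, where $\abs{z}$ is comparable to $\abs{y_\alpha}$: the normalisation factor $(\lambda_\alpha/\abs{y_\alpha})^{-b}$ built into the definition of $\tau^-$ is precisely what is needed to cancel the constant $\abs{y_\alpha}^{-ap}$ produced by the weight after the change of variables, leaving only a $1 + O(\lambda_\alpha/\abs{y_\alpha})$ perturbation, which is negligible thanks to property \ref{it:ratio inf}.
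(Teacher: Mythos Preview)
Your proposal is correct and follows essentially the same route as the paper: the same split into $B_\alpha = B(y_\alpha,\abs{y_\alpha}/4)$ and its complement, the same use of Lemma~\ref{lem:away from ball} together with the elementary inequality and H\"older on the complement, and the same change of variables on $B_\alpha$ reducing to a Br\'ezis--Lieb statement for $(\widetilde u_\alpha)$ against the weight $W_\alpha$ controlled by Lemma~\ref{lem:cutoff tools}. The only cosmetic difference is that the paper, instead of your explicit $M$-truncation, applies the symmetric inequality $\bigl|\abs{x+y}^p - \abs{x}^p - \abs{y}^p\bigr| \le C(\abs{x}^{p-1}\abs{y} + \abs{x}\abs{y}^{p-1})$ directly in the $z$-variable and then changes variables once to reach $\int \abs{\nabla v}^{p-1}\abs{\nabla(v_\alpha - v)} + \abs{\nabla v}\abs{\nabla(v_\alpha - v)}^{p-1}\,\d x \to 0$, which it dispatches by invoking standard a.e.\ convergence results rather than an $\varepsilon/3$ argument.
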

\begin{proof}
	For convenience, let us write $\tau_\alpha^- := \tau_{y_\alpha, \lambda_\alpha}^-$ and extend each $(u_\alpha)$ by $0$ to all of $\RR^n$. In order to establish (a), we split
	\begin{equation}\label{eq:w_split}
		\norm{w_\alpha}_{\D_a^{1,p}(\RR^n)}^p
		= \int_{B_\alpha}\abs{\nabla w_\alpha(z)}^p\abs{z}^{-ap}\d{z} +
		\int_{\RR^n\setminus B_\alpha}\abs{\nabla w_\alpha(z)}^p\abs{z}^{-ap}\d{z}
	\end{equation}
	where $B_\alpha := B(y_\alpha, \abs{y_\alpha}/4)$. The idea is that in the ball $B_\alpha$, the $\eta$ term is neutralized and, due to Lemma \ref{lem:away from ball}, we expect $\tau_\alpha^- v$ to be negligible outside of $B_\alpha$.
	
	In order to handle the first term on the right-hand side of equation \eqref{eq:w_split}, we add and subtract terms to obtain
	\begin{equation}\label{eq:pre1}
		\begin{aligned}
			\int_{B_\alpha}\abs{\nabla w_\alpha(z)}^p\abs{z}^{-ap}\d{z}
			&= \int_{B_\alpha}\left(\abs{\nabla w_\alpha(z)}^p + \abs{\nabla \left[\tau_\alpha^-v\right](z)}^p - \abs{\nabla u_\alpha(z)}^p \right)\abs{z}^{-ap}\d{z}\\
			&\quad + \int_{B_\alpha}\abs{\nabla u_\alpha(z)}^p\abs{z}^{-ap}\d{z} - \int_{B_\alpha}\abs{\nabla \left[\tau_\alpha^-v\right](z)}^p\abs{z}^{-ap}\d{z}.
		\end{aligned}
	\end{equation}
	Now, observe that observe that since $a=b$, properties of $\eta$ and a change of variables yields
	\begin{align*}
		\int_{B_\alpha}\abs{\nabla \left[\tau_\alpha^-v\right](z)}^p\abs{z}^{-ap}\d{z}
		&=\int_{\abs{x} \le \frac{\abs{y_\alpha}}{4\lambda_\alpha}}\abs{\nabla v(x)}^p\left(\frac{\abs{\lambda_\alpha x + y_\alpha}}{\abs{y_\alpha}}\right)^{-ap}\d{x}\\
		&=\int_{\RR^n}\abs{\nabla v(x)}^p\d{x} + o(1)
	\end{align*}
	as $\alpha\to\infty$ by the dominated convergence theorem (use Lemma \ref{lem:cutoff tools}-\ref{point1} and that $\abs{y_\alpha}/\lambda_\alpha \to\infty$). Returning to equation \eqref{eq:pre1}, we see that, as $\alpha\to\infty$,
	\begin{equation}\label{eq:1}
		\begin{aligned}
			\int_{B_\alpha}\abs{\nabla w_\alpha(z)}^p\abs{z}^{-ap}\d{z}
			&= \int_{B_\alpha}\left(\abs{\nabla w_\alpha(z)}^p + \abs{\nabla \left[\tau_\alpha^-v\right](z)}^p - \abs{\nabla u_\alpha(z)}^p \right)\abs{z}^{-ap}\d{z}\\
			&\quad + \int_{B_\alpha}\abs{\nabla u_\alpha(z)}^p\abs{z}^{-ap}\d{z} - \int_{\RR^n}\abs{\nabla v(x)}^p\d{x} + o(1).
		\end{aligned}
	\end{equation}
	Moreover, it is easy to show that there exists a constant $C>0$ such that
	\begin{equation}\label{eq:ineq}
		\abs{\abs{x}^p + \abs{y}^p - \abs{x+y}^p} \le C\left(\abs{x}^{p-1}\abs{y} + \abs{x}\abs{y}^{p-1}\right)\qquad\forall x,y\in \RR^n.
	\end{equation}
	Therefore, we may bound
	\begin{align*}
		&\int_{B_\alpha}\abs{\abs{\nabla w_\alpha(z)}^p + \abs{\nabla \left[\tau_\alpha^-v\right](z)}^p - \abs{\nabla u_\alpha(z)}^p}\abs{z}^{-ap}\d{z}\\
		&\le C\int_{B_\alpha}\left(\abs{\nabla \left[\tau_\alpha^-v\right](z)}^{p-1}\abs{\nabla w_\alpha(z)} + \abs{\nabla \left[\tau_\alpha^-v\right](z)}\abs{\nabla w_\alpha(z)}^{p-1}\right)\abs{z}^{-ap}\d{z}\\
		&\le C\int_{\abs{x} \le \frac{\abs{y_\alpha}}{4\lambda_\alpha}}\left(\abs{\nabla v}^{p-1}\abs{\nabla [v_\alpha-v]} + \abs{\nabla v}\abs{\nabla [v_\alpha-v]}^{p-1}\right)\d{x}
	\end{align*}
	where, with a possible modification of the constant $C$, the last inequality follows from a change of variables. Leveraging pointwise convergence and boundedness in appropriate Lebesgue spaces, it follows from standard results that this last term is $o(1)$ as $\alpha\to\infty$. Therefore, returning to \eqref{eq:1}, we have
	\begin{equation}\label{eq:1f}
		\int_{B_\alpha}\abs{\nabla w_\alpha(z)}^p\abs{z}^{-ap}\d{z}
		= \int_{B_\alpha}\abs{\nabla u_\alpha(z)}^p\abs{z}^{-ap}\d{z} - \int_{\RR^n}\abs{\nabla v (x)}^p\d{x} + o(1).
	\end{equation}
	Next, to handle the second term on the right-hand side of equation \eqref{eq:w_split}, we once again add and subtract terms to obtain, as $\alpha\to \infty$,
	\begin{equation}\label{eq:2}
		\begin{aligned}
			&\int_{\RR^n\setminus B_\alpha}\abs{\nabla w_\alpha(z)}^p\abs{z}^{-ap}\d{z}\\
			&= \int_{\RR^n\setminus B_\alpha}\left(\abs{\nabla w_\alpha(z)}^p - \abs{\nabla u_\alpha(z)}^p - \abs{\nabla \left[\tau_\alpha^-v\right](z)}^p \right)\abs{z}^{-ap}\d{z}\\
			&\quad + \int_{\RR^n\setminus B_\alpha}\abs{\nabla u_\alpha(z)}^p\abs{z}^{-ap}\d{z} + \int_{\RR^n\setminus B_\alpha}\abs{\nabla \left[\tau_\alpha^-v\right](z)}^p\abs{z}^{-ap}\d{z}.
		\end{aligned}
	\end{equation}
	By Lemma \ref{lem:away from ball}, the last term on the right-hand side tends to $0$ as $\alpha\to\infty$. Furthermore, using once again inequality \eqref{eq:ineq}, we bound
	\begin{align*}
		&\int_{\RR^n\setminus B_\alpha}\abs{\abs{\nabla w_\alpha(z)}^p  - \abs{\nabla u_\alpha(z)}^p - \abs{\nabla \left[\tau_\alpha^-v\right](z)}^p }\abs{z}^{-ap}\d{z}\\
		&\le C\int_{\RR^n\setminus B_\alpha}\left(\abs{\nabla u_\alpha(z)}^{p-1}\abs{\nabla \left[\tau_\alpha^-v\right](z)} + \abs{\nabla u_\alpha(z)}\abs{\nabla \left[\tau_\alpha^-v\right](z)}^{p-1}\right)\abs{z}^{-ap}\d{z}\\
		&\le C\left(\norm{u_\alpha}_{\D_a^{1,p}(\Om,0)}^{p-1}\norm{\nabla \left[\tau_\alpha^-v\right]}_{L_a^p(\RR^n\setminus B_\alpha)} + \norm{u_\alpha}_{\D_a^{1,p}(\Om,0)}\norm{\nabla \left[\tau_\alpha^-v\right]}_{L_a^p(\RR^n\setminus B_\alpha)}^{p-1}\right)
	\end{align*}
	by H\"older's inequality. Since $(u_\alpha)$ is a bounded sequence in $\D_a^{1,p}(\Om,0)$, it follows from Lemma \ref{lem:away from ball} that the last equation tends to $0$ as $\alpha\to\infty$. Thus, returning to \eqref{eq:2}, we see that
	\begin{equation}\label{eq:2f}
		\int_{\RR^n\setminus B_\alpha}\abs{\nabla w_\alpha(z)}^p\abs{z}^{-ap}\d{z}
		= \int_{\RR^n\setminus B_\alpha}\abs{\nabla u_\alpha(z)}^p\abs{z}^{-ap}\d{z} +o(1)
	\end{equation}
	Combining equations \eqref{eq:1f} and \eqref{eq:2f}, we obtain
	\[
	\int_{\RR^n}\abs{\nabla w_\alpha(z)}^p\abs{z}^{-ap}\d{z}
	= \int_{\RR^n}\abs{\nabla u_\alpha(z)}^p\abs{z}^{-ap}\d{z} - \int_{\RR^n}\abs{\nabla v (x)}^p\d{x} + o(1)
	\]
	which is precisely (a). We may deduce (b) via a similar argument.
\end{proof}

\begin{lem}\label{lem:energies every term}
	Suppose $(u_\alpha)$ in $\D_a^{1,p}(\Omega, 0)$ is a given bounded sequence and there is $v\in \D^{1,p}(\RR^n)$ such that
	\begin{enumerate}[label = (\roman*)]
		\item $v_\alpha \rightharpoonup v$ weakly in $\D^{1,p}(\RR^n)$ and pointwise almost everywhere.
		\item $\nabla v_\alpha \to \nabla v$ almost everywhere in $\RR^n$,
		\item\label{it:lem_iii} with $p^\prime = p/(p-1)$ denoting the H\"older conjugate of $p$,
		\[
		\abs{\nabla v_\alpha}^{p-2}\nabla v_\alpha - \abs{\nabla v_\alpha - \nabla v}^{p-2}\left(\nabla v_\alpha - \nabla v\right) \to \abs{\nabla v}^{p-2}\nabla v
		\]
		in $L^{p^\prime}(\RR^n)$,
		\item\label{it:lem_iv} with $q^\prime = q/(q-1)$ denoting the H\"older conjugate of $q$,
		\[
		\abs{ v_\alpha}^{q-2} v_\alpha - \abs{ v_\alpha -  v}^{q-2}\left( v_\alpha -  v\right) \to \abs{ v}^{q-2} v
		\]
		in $L^{q^\prime}(\RR^n)$,
	\end{enumerate}
	where $v_\alpha := \tau_{y_\alpha, \lambda_\alpha}u_\alpha$. If $a=b$ (so \(q = p^\ast\)) and \ref{it:ratio inf} holds (i.e. $\abs{y_\alpha}/\lambda_\alpha \to\infty$) then, with $w_\alpha := u_\alpha - \tau^-_{y_\alpha, \lambda_\alpha}v$,
	\[
	\inner{\phi_{0,\infty}^\prime(w_\alpha), g} = \inner{\phi^\prime(u_\alpha), g} - \inner{\phi_\infty^\prime(v), \tau_{y_\alpha, \lambda_\alpha}g} + o\left(\norm{g}_{\D_a^{1,p}(\Om,0)}\right)
	\]
	as $\alpha \to\infty$ over $g\in \D_a^{1,p}(\Om, 0)$.
\end{lem}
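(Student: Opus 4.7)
Writing $\psi_\alpha := \tau^-_{y_\alpha, \lambda_\alpha} v$ so that $w_\alpha = u_\alpha - \psi_\alpha$, the plan is to expand $\inner{\phi^\prime_{0,\infty}(w_\alpha), g}$ into its gradient and nonlinear pieces, and split each integral over the ball $B_\alpha := B(y_\alpha, \abs{y_\alpha}/4)$ and its complement. Two geometric facts drive the argument. First, on $\RR^n \setminus B_\alpha$, the cutoff in $\psi_\alpha$ renders it small in both $L^p_a$-gradient and $L^q_b$ norms by Lemma \ref{lem:away from ball}. Second, on $B_\alpha$, which corresponds under $z = \lambda_\alpha x + y_\alpha$ to $A_\alpha := B(0, \abs{y_\alpha}/(4\lambda_\alpha))$, the cutoff $\eta(2\lambda_\alpha x/\abs{y_\alpha}) \equiv 1$, so that $w_\alpha(\lambda_\alpha x + y_\alpha) = (\lambda_\alpha/\abs{y_\alpha})^{-b}\lambda_\alpha^{-\gamma}(v_\alpha - v)(x)$ on $A_\alpha$.

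On $\RR^n \setminus B_\alpha$, standard pointwise inequalities of the form $\abs{\abs{a-b}^{p-2}(a-b) - \abs{a}^{p-2}a} \le C (\abs{a}+\abs{b})^{p-2}\abs{b}$ (with the corresponding variant for $1 < p < 2$), combined with H\"older's inequality and the Lemma \ref{lem:away from ball} bounds on $\psi_\alpha$ and $\nabla\psi_\alpha$, reduce the exterior contribution of $\inner{\phi^\prime_{0,\infty}(w_\alpha),g}$ to the corresponding contribution of $\inner{\phi^\prime_{0,\infty}(u_\alpha), g}$ up to $o(\norm{g}_{\D_a^{1,p}(\Om,0)})$ uniformly in $g$. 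On $B_\alpha$, a direct change of variables together with the explicit form of $w_\alpha$ above and the exponent identity $(\gamma + 1)p = n - ap$ (which crucially uses $a = b$) produces the clean identity
\[
\int_{B_\alpha} \abs{\nabla w_\alpha}^{p-2}\nabla w_\alpha\cdot \nabla g\,\abs{z}^{-ap}\d{z} = \int_{A_\alpha} \abs{\nabla v_\alpha - \nabla v}^{p-2}(\nabla v_\alpha - \nabla v)\cdot \nabla[\tau_{y_\alpha,\lambda_\alpha} g]\left(\frac{\abs{\lambda_\alpha x + y_\alpha}}{\abs{y_\alpha}}\right)^{-ap}\d{x},
\]
and similarly for the nonlinear term with exponent $-bq$.

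Conditions (iii) and (iv) then allow me to replace $\abs{\nabla v_\alpha - \nabla v}^{p-2}(\nabla v_\alpha - \nabla v)$ by $\abs{\nabla v_\alpha}^{p-2}\nabla v_\alpha - \abs{\nabla v}^{p-2}\nabla v$ up to an error $r_\alpha$ with $\norm{r_\alpha}_{L^{p^\prime}(\RR^n)} \to 0$, and analogously for the $q$-piece; since $\norm{\nabla[\tau_{y_\alpha,\lambda_\alpha}g]}_{L^p(\RR^n)} \le C\norm{g}_{\D_a^{1,p}(\Om,0)}$ by Lemma \ref{lem:rescale_mod_bound} (which exploits $a=b$ to kill the $(\lambda/\abs{y})^{b-a}$ factor), this remainder contributes $o(\norm{g}_{\D_a^{1,p}(\Om,0)})$. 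Reversing the change of variables on the $\abs{\nabla v_\alpha}^{p-2}\nabla v_\alpha$ and $\abs{v_\alpha}^{q-2}v_\alpha$ pieces restores $\int_{B_\alpha}\abs{\nabla u_\alpha}^{p-2}\nabla u_\alpha\cdot \nabla g\,\abs{z}^{-ap}\d{z}$ and its nonlinear counterpart, which combine with the exterior pieces to reassemble $\inner{\phi^\prime_{0,\infty}(u_\alpha), g} = \inner{\phi^\prime(u_\alpha), g}$. The remaining $\abs{\nabla v}^{p-2}\nabla v$ and $\abs{v}^{q-2}v$ pieces are precisely the truncated integrals appearing in Lemma \ref{lem:concentration limiting energy}, so that lemma identifies them with $\inner{\phi^\prime_\infty(v), \tau_{y_\alpha,\lambda_\alpha} g}$ up to $o(\norm{g}_{\D_a^{1,p}(\Om,0)})$, delivering the claim. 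The main technical obstacle I foresee is the uniform-in-$g$ exterior estimate: the elementary pointwise inequality behaves differently for $p \ge 2$ and $1 < p < 2$ and must be paired carefully with H\"older's inequality and Lemma \ref{lem:away from ball}, but beyond that no new ingredient beyond the tools developed in \S3 and \S5 is required.
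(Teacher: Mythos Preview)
Your proposal is correct and follows essentially the same route as the paper's proof: the same splitting over $B_\alpha$ and its complement, the same change-of-variables identity on $B_\alpha$ (reducing to $v_\alpha - v$), the same use of hypotheses (iii)--(iv) together with Lemma~\ref{lem:rescale_mod_bound} and Lemma~\ref{lem:cutoff tools}\ref{point1} to pass to $\abs{\nabla v_\alpha}^{p-2}\nabla v_\alpha - \abs{\nabla v}^{p-2}\nabla v$, the same exterior estimate via Lemma~\ref{lem:away from ball} with the $1<p\le 2$ versus $p>2$ case split, and the same closing appeal to Lemma~\ref{lem:concentration limiting energy}. The only point you leave implicit is that the weight $(\abs{\lambda_\alpha x + y_\alpha}/\abs{y_\alpha})^{-ap}$ is uniformly bounded on $A_\alpha$ (Lemma~\ref{lem:cutoff tools}\ref{point1}), which is needed when pairing the $L^{p'}$-small remainder $r_\alpha$ against $\nabla[\tau_{y_\alpha,\lambda_\alpha}g]$.
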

\begin{proof}
	Let $g\in \D_a^{1,p}(\Om, 0)$ and define $B_\alpha := B(y_\alpha, \abs{y_\alpha}/4)$ as in Lemma \ref{lem:w_alpha norms}. We split as follows:
	\begin{align*}
		\inner{\phi^\prime_{0, \infty}(w_\alpha), g}
		&=\int_{B_\alpha}\abs{\nabla w_\alpha}^{p-2}\pinner{\nabla w_\alpha, \nabla g} \abs{z}^{-ap} \d{z} + \int_{\RR^n\setminus B_\alpha}\abs{\nabla w_\alpha}^{p-2}\pinner{\nabla w_\alpha, \nabla g}\abs{z}^{-ap}\d{z}\\
		&\qquad - \int_{B_\alpha}\abs{w_\alpha}^{q-2}w_\alpha g\abs{z}^{-bq}\d{z} - \int_{\RR^n\setminus B_\alpha}\abs{w_\alpha}^{q-2}w_\alpha g\abs{z}^{-bq}\d{z}\\
		&=: I^{(1)}_a + I^{(2)}_a - I^{(1)}_b - I^{(2)}_b.
	\end{align*}
	On $B_\alpha$, using properties of $\eta$ and performing a change of variables, we see that
	\[
	I^{(1)}_a
	=\int_{\abs{x}\le \frac{\abs{y_\alpha}}{4\lambda_\alpha}}\abs{\nabla [v_\alpha-v]}^{p-2}\pinner{\nabla [v_\alpha-v], \nabla \left[\tau_{y_\alpha, \lambda_\alpha}g\right]} \left(\frac{\abs{\lambda_\alpha x + y_\alpha}}{\abs{y_\alpha}}\right)^{-ap}\d{x}
	\]
	Using this expression for $I_a^{(1)}$, it follows from Lemma \ref{lem:cutoff tools}-\ref{point1} that there exists a constant $C>0$ such that
	\begin{align*}
		&\abs{I^{(1)}_a - \int_{\abs{x}\le \frac{\abs{y_\alpha}}{4\lambda_\alpha}}\pinner{\left[\abs{\nabla v_\alpha}^{p-2}\nabla v_\alpha - \abs{\nabla v}^{p-2}\nabla v \right], \nabla \left[\tau_{y_\alpha, \lambda_\alpha}g\right]} \left(\frac{\abs{\lambda_\alpha x + y_\alpha}}{\abs{y_\alpha}}\right)^{-ap}\d{x}}\\
		&\le C\int_{\abs{x}\le \frac{\abs{y_\alpha}}{4\lambda_\alpha}}\abs{\abs{\nabla [v_\alpha-v]}^{p-2}\nabla [v_\alpha-v] - \abs{\nabla v_\alpha}^{p-2}\nabla v_\alpha + \abs{\nabla v}^{p-2}\nabla v}\cdot \abs{\nabla \left[\tau_{y_\alpha, \lambda_\alpha}g\right]} \d{x}\\
		&= o\left(\norm{g}_{\D_a^{1,p}(\Om, 0)}\right)
	\end{align*}
	as $\alpha\to\infty$. Here, the last line is derived using H\"older's inequality followed by property \ref{it:lem_iii} and Lemma \ref{lem:rescale_mod_bound}. From this, we have
	\begin{align*}
		I^{(1)}_a
		&=\int_{\abs{x}\le \frac{\abs{y_\alpha}}{4\lambda_\alpha}}\abs{\nabla v_\alpha}^{p-2}\pinner{\nabla v_\alpha, \nabla \left[\tau_{y_\alpha, \lambda_\alpha}g\right]} \left(\frac{\abs{\lambda_\alpha x + y_\alpha}}{\abs{y_\alpha}}\right)^{-ap}\d{x} \\
		&\qquad - \int_{\abs{x}\le \frac{\abs{y_\alpha}}{4\lambda_\alpha}}\abs{\nabla v}^{p-2}\pinner{\nabla v , \nabla \left[\tau_{y_\alpha, \lambda_\alpha}g\right]} \left(\frac{\abs{\lambda_\alpha x + y_\alpha}}{\abs{y_\alpha}}\right)^{-ap}\d{x} + o\left(\norm{g}_{\D_a^{1,p}(\Om, 0)}\right)
	\end{align*}
	as $\alpha\to\infty$. After a change of variables on the first integral, we obtain
	\begin{align}
		I^{(1)}_a
		&=\int_{B_\alpha}\abs{\nabla u_\alpha}^{p-2}\pinner{\nabla u_\alpha, \nabla g} \abs{z}^{-ap}\d{z} \label{eq:lemI_a^1}\\
		&\qquad - \int_{\abs{x}\le \frac{\abs{y_\alpha}}{4\lambda_\alpha}}\abs{\nabla v}^{p-2}\pinner{\nabla v, \nabla \left[\tau_{y_\alpha, \lambda_\alpha}g\right]} \left(\frac{\abs{\lambda_\alpha x + y_\alpha}}{\abs{y_\alpha}}\right)^{-ap}\d{x} + o\left(\norm{g}_{\D_a^{1,p}(\Om, 0)}\right)\notag
	\end{align}
	as $\alpha\to\infty$. Similarly, using property \ref{it:lem_iv} instead of property \ref{it:lem_iii},
	we have
	\begin{align}
		I_b^{(1)}
		&= \int_{B_\alpha}\abs{u_\alpha}^{q-2}u_\alpha(z) g(z)\abs{z}^{-bq}\d{z} \label{eq:lemI_b^1}\\
		&\qquad -\int_{\abs{x}\le \frac{\abs{y_\alpha}}{4\lambda_\alpha}}\abs{v}^{q-2}v(x) \left[\tau_{y_\alpha, \lambda_\alpha}g\right](x)\left(\frac{\abs{\lambda_\alpha x + y_\alpha}}{\abs{y_\alpha}}\right)^{-bq}\d{x} + o\left(\norm{g}_{\D_a^{1,p}(\Om, 0)}\right)\notag
	\end{align}
	as $\alpha\to\infty$.
	
	Next, using H\"older's inequality, we may bound
	\begin{align}
		&\abs{I_a^{(2)} - \int_{\RR^n\setminus B_\alpha}\abs{\nabla u_\alpha}^{p-2}\pinner{\nabla u_\alpha, \nabla g}\abs{z}^{-ap}\d{z}}\notag\\
		&=\abs{\int_{\RR^n\setminus B_\alpha}\pinner{\left[\abs{\nabla w_\alpha}^{p-2}\nabla w_\alpha - \abs{\nabla u_\alpha}^{p-2}\nabla u_\alpha\right], \nabla g}\abs{z}^{-ap}\d{z}}\label{eq:lemtemp}\\
		&\le \norm{\abs{\nabla w_\alpha}^{p-2}\nabla w_\alpha - \abs{\nabla u_\alpha}^{p-2}\nabla u_\alpha}_{L^{p^\prime}_a(\RR^n\setminus B_\alpha)}\norm{g}_{\D_a^{1,p}(\Om, 0)}\notag
	\end{align}
	Let us now bound the right hand side by considering two distinct cases. First, if $1<p\le 2$ then it follows from a result of Mercuri-Willem \cite[Lemma 3.1]{mercuri-willem} that, for some constant $C>0$,
	\[
	\abs{\abs{\nabla w_\alpha}^{p-2}\nabla w_\alpha - \abs{\nabla u_\alpha}^{p-2}\nabla u_\alpha}
	\le C\abs{\nabla w_\alpha - \nabla u_\alpha}^{p-1} = C\abs{\nabla\left[\tau^-_{y_\alpha, \lambda_\alpha}v\right]}^{p-1}.
	\]
	Then,
	\[
	\norm{\abs{\nabla w_\alpha}^{p-2}\nabla w_\alpha - \abs{\nabla u_\alpha}^{p-2}\nabla u_\alpha}_{L^{p^\prime}_a(\RR^n\setminus B_\alpha)}
	\le C\norm{\nabla\left[\tau^-_{y_\alpha, \lambda_\alpha}v\right]}_{L^p_a(\RR^n\setminus B_\alpha)} \to 0
	\]
	as $\alpha \to \infty$ by 
	Lemma \ref{lem:away from ball}. In conclusion, when $1< p\le 2$, we have shown that
	\[
	\norm{\abs{\nabla w_\alpha}^{p-2}\nabla w_\alpha - \abs{\nabla u_\alpha}^{p-2}\nabla u_\alpha}_{L^{p^\prime}_a(\RR^n\setminus B_\alpha)} \to 0.
	\]
	On the other hand, if $p > 2$ then it is easy to see that there is a constant $C=C(p)>0$ such that
	\[
	\abs{\abs{x+y}^{p-2}(x+y) - \abs{x}^{p-2}x - \abs{y}^{p-2}y}\le C\left(\abs{x}^{p-2}\abs{y}  + \abs{x}\abs{y}^{p-2}\right),\quad\forall x,y\in \RR^n.
	\]
	Therefore,
	\begin{align*}
		&\abs{\abs{\nabla w_\alpha}^{p-2}\nabla w_\alpha - \abs{\nabla u_\alpha}^{p-2}\nabla u_\alpha - \abs{-\nabla\left[\tau^-_{y_\alpha, \lambda_\alpha}v\right]}^{p-2}\left(-\nabla\left[\tau^-_{y_\alpha, \lambda_\alpha}v\right]\right)} \\
		&\le C\left(\abs{u_\alpha}^{p-2}\abs{\nabla\left[\tau^-_{y_\alpha, \lambda_\alpha}v\right]} + \abs{u_\alpha}\abs{\nabla\left[\tau^-_{y_\alpha, \lambda_\alpha}v\right]}^{p-2}\right)
	\end{align*}
	Then, the triangle inequality followed by this last identity allows us bound as follows:
	\begin{align*}
		&\norm{\abs{\nabla w_\alpha}^{p-2}\nabla w_\alpha - \abs{\nabla u_\alpha}^{p-2}\nabla u_\alpha}_{L^{p^\prime}_a(\RR^n\setminus B_\alpha)}\\
		&\le \norm{\abs{\nabla w_\alpha}^{p-2}\nabla w_\alpha - \abs{\nabla u_\alpha}^{p-2}\nabla u_\alpha + \abs{\nabla\left[\tau^-_{y_\alpha, \lambda_\alpha}v\right]}^{p-2}\nabla\left[\tau^-_{y_\alpha, \lambda_\alpha}v\right]}_{L^{p^\prime}_a(\RR^n\setminus B_\alpha)} \\
		&\quad + \norm{\abs{\nabla\left[\tau^-_{y_\alpha, \lambda_\alpha}v\right]}^{p-1}}_{L^{p^\prime}_a(\RR^n\setminus B_\alpha)}\\
		&\le C\left(\norm{\abs{\nabla u_\alpha}^{p-2}\abs{\nabla\left[\tau^-_{y_\alpha, \lambda_\alpha}v\right]}}_{L^{p^\prime}_a(\RR^n\setminus B_\alpha)} + \norm{\abs{\nabla u_\alpha}\abs{\nabla\left[\tau^-_{y_\alpha, \lambda_\alpha}v\right]}^{p-2}}_{L^{p^\prime}_a(\RR^n\setminus B_\alpha)}\right) \\
		&\quad + \norm{\nabla\left[\tau^-_{y_\alpha, \lambda_\alpha}v\right]}_{L^{p}_a(\RR^n\setminus B_\alpha)}
	\end{align*}
	Then, using H\"older's inequality (with exponent $p-1 > 1$ and conjugate exponent $(p-1)/(p-2)$) and that $(u_\alpha)$ is a bounded sequence in $\D_a^{1,p}(\Om,0)$, it follows from Lemma \ref{lem:away from ball} that
	\[
	\norm{\abs{\nabla w_\alpha}^{p-2}\nabla w_\alpha - \abs{\nabla u_\alpha}^{p-2}\nabla u_\alpha}_{L^{p^\prime}_a(\RR^n\setminus B_\alpha)} \to 0.
	\]
	Having demonstrated this convergence both in the case $1<p\le 2$ and $p>2$, we return to equation \eqref{eq:lemtemp} and conclude that
	\[
	I_a^{(2)} = \int_{\RR^n\setminus B_\alpha}\abs{\nabla u_\alpha}^{p-2}\nabla \pinner{u_\alpha, \nabla g}\abs{z}^{-ap}\d{z} + o\left(\norm{g}_{\D_a^{1,p}(\Om,0)}\right)
	\]
	as $\alpha\to\infty$. Similarly, we obtain
	\[
	I_b^{(2)} = \int_{\RR^n\setminus B_\alpha}\abs{u_\alpha}^{q-2}u_\alpha g\abs{z}^{-bq}\d{z} + o\left(\norm{g}_{\D_a^{1,p}(\Om,0)}\right)
	\]
	as $\alpha\to\infty$.
	
	Combining these last two equations with \eqref{eq:lemI_a^1} and \eqref{eq:lemI_b^1}, as $\alpha\to\infty$
	\begin{align*}
		\inner{\phi^\prime_{0, \infty}(w_\alpha), g}
		&= I^{(1)}_a + I^{(2)}_a - I^{(1)}_b - I^{(2)}_b \\
		&= \int_{\RR^n}\abs{\nabla u_\alpha}^{p-2}\pinner{\nabla u_\alpha, \nabla g} \abs{z}^{-ap}\d{z} - \int_{\RR^n}\abs{u_\alpha}^{q-2}u_\alpha(z) g(z)\abs{z}^{-bq}\d{z}\\
		&\quad - \int_{\abs{x}\le \frac{\abs{y_\alpha}}{4\lambda_\alpha}}\abs{\nabla v}^{p-2}\pinner{\nabla v , \nabla \left[\tau_{y_\alpha, \lambda_\alpha}g\right]} \left(\frac{\abs{\lambda_\alpha x + y_\alpha}}{\abs{y_\alpha}}\right)^{-ap}\d{x}\\
		&\qquad +\int_{\abs{x}\le \frac{\abs{y_\alpha}}{4\lambda_\alpha}}\abs{v}^{q-2}v(x) \left[\tau_{y_\alpha, \lambda_\alpha}g\right](x)\left(\frac{\abs{\lambda_\alpha x + y_\alpha}}{\abs{y_\alpha}}\right)^{-bq}\d{x}\\
		&\qquad\quad + o\left(\norm{g}_{\D_a^{1,p}(\Om,0)}\right).
	\end{align*}
	Finally, by Lemma \ref{lem:concentration limiting energy},
	\[
	\inner{\phi^\prime_{0, \infty}(w_\alpha), g}
	=\inner{\phi^\prime(u_\alpha), g} - \inner{\phi^\prime_\infty(v), \tau_{y_\alpha, \lambda_\alpha}g} + o\left(\norm{g}_{\D_a^{1,p}(\Om,0)}\right).
	\]
\end{proof}

\begin{proof}[Proof of Proposition \ref{prop:iteration}]
	It immediately follows from Lemma \ref{lem:convergence of functions on domains} that $v\in \D^{1,p}(\Om_\infty)$. Since $\Om_\infty$ is either a half-space or $\RR^n$, we may find an increasing collection  $(B_k)_{k\in \NN}$ of open sets such that
	\begin{enumerate}
		\item $B_k$ is compactly contained in $\Om_\infty$ for each $k$;
		\item For each $k$, the closure of $B_k$ is a compact set contained in $B_{k+1}$;
		\item Every $x\in \Om_\infty$ is in $B_k$ for all $k$ large.
	\end{enumerate} 
	For fixed $k$, it follows from Lemma \ref{lem:transform energy} that
	\[
	\inner{\phi^\prime_\infty(v_\alpha), h} = \inner{\phi^\prime(u_\alpha), \tau^-_{y_\alpha,\lambda_\alpha} h} + o\left(\norm{h}_{\D^{1,p}(\RR^n)}\right)
	\]
	as $\alpha\to\infty$ over $h\in \D^{1,p}(B_k)$. Here, we note that there is a slight abuse in notation since the right-hand side is only well-defined for $\alpha$ large. Indeed, for all $\alpha$ large, $B_k\subseteq \Om_\alpha$ so $\tau^-_{y_\alpha,\lambda_\alpha}h \in \D^{1,p}_a(\Om,0)$ for each $h\in \D^{1,p}(B_k)$. We may further bound
	\begin{align*}
		\abs{\inner{\phi^\prime_\infty(v_\alpha), h}}
		&\le \abs{\inner{\phi^\prime(u_\alpha), \tau^-_{y_\alpha,\lambda_\alpha} h}} + o\left(\norm{h}_{\D^{1,p}(\RR^n)}\right)\\
		&\le \norm{\phi^\prime(u_\alpha)}_{\D^{-1,p^\prime}_a(\Om,0)}\norm{\tau^-_{y_\alpha,\lambda_\alpha} h}_{\D_a^{1,p}(\Om,0)}+ o\left(\norm{h}_{\D^{1,p}(\RR^n)}\right)\\
		&=  o\left(\norm{h}_{\D^{1,p}(\RR^n)}\right)
	\end{align*}
	as $\alpha\to\infty$ over $h\in \D^{1,p}(B_k)$. Note that, for the last equality, we used the assumption $\phi^\prime(u_\alpha)\to 0$ strongly in the dual of $\D_a^{1,p}(\Om, 0)$ and Lemma \ref{lem:rescale_mod_inv_bound}. From our work thus far, we see that
	\begin{equation}\label{eq:h energy}
		\phi_\infty^\prime(v_\alpha) \to 0 \text{ strongly in } \D^{-1,p^\prime}(B_k)
	\end{equation}
	for each fixed $k\in \NN$.

	Next, we will prove that the conditions of Corollary \ref{cor:gradientConvergence} are satisfied for the sequence $(v_\alpha)$ with the increasing sequence of open subsets $B_k \nearrow \Om_\infty$. First, it is clear from Lemma \ref{lem:rescale_mod_bound} that $(v_\alpha)$ is bounded in $\D^{1,p}(\RR^n)$. Let $T :\RR\to \RR$ be the Lipschitz continuous function
	\[
	T(x) = \begin{cases}
		x &\text{if }\abs{x} \le 1\\
		\frac{x}{\abs{x}} &\text{if } \abs{x} > 1.
	\end{cases}
	\]
	Fix $k\in\NN$ and let $\rho\in C_c^\infty(\Om_\infty) \subseteq C_c^\infty(\RR^n)$ be a bump function such that
	\[
	\begin{cases}
		0\le\rho\le 1 &\text{in } \RR^n\\
		\rho \equiv 1 &\text{in } B_k\\
		\rho \equiv 0 &\text{outside } B_{k+1}.
	\end{cases}
	\]
	Observe that
	\[
	f_\alpha := \abs{\nabla v_\alpha}^{p-2}\nabla v_\alpha - \abs{\nabla v}^{p-2}\nabla v
	\]
	is uniformly (in $\alpha$) bounded in $L^{p^\prime}(\RR^n)$, with $p^\prime := p/(p-1)$. Moreover, by a classical result (see, for instance, Szulkin-Willem \cite[Lemma 2.1]{Szulkin-Willem} and Simon \cite[p.210-212]{simon2006regularite} for the proof), we have that $\pinner{f_\alpha, \left[\nabla v_\alpha - \nabla v\right]} \ge 0$ almost everywhere. Therefore,
	\begin{align*}
		0\le \int_{B_k} \pinner{f_\alpha, \nabla T(v_\alpha -v)}\d{x}
		&\le \int_{\RR^n} \pinner{f_\alpha, \rho\nabla T(v_\alpha -v)}\d{x}\\
		&=\int_{\RR^n} \pinner{f_\alpha, \nabla \left[\rho T(v_\alpha -v)\right]}\d{x}
		-\int_{\RR^n} \pinner{f_\alpha, T(v_\alpha -v)\nabla \rho}\d{x}\\
		&=: I_\alpha^{(1)} - I^{(2)}_\alpha.
	\end{align*}
	We claim that both $I_\alpha^{(1)}$ and $I_\alpha^{(2)}$ tend to $0$ as $\alpha\to\infty$. First, by H\"older's inequality,
	\begin{align*}
		\abs{I_\alpha^{(2)}}
		\le \norm{f_\alpha}_{L^{p^\prime}(\RR^n)}\left(\int_{\RR^n}\abs{\nabla\rho}^p\abs{T(v_\alpha-v)}^p\d{x}\right)^{1/p}
	\end{align*}
	Since the right integrand is bounded by a constant multiple of the indicator function of $B_{k+1}$, the dominated convergence theorem (using that $v_\alpha\to v$ almost everywhere) implies that $I_\alpha^{(2)}\to 0$ as $\alpha\to\infty$. 
	
	Next, in order to estimate $I_\alpha^{(1)}$, we first observe that $\rho T(v_\alpha - v) \in \D^{1,p}\left(B_{k+1}\right)$ and, in fact, $\rho T(v_\alpha-v)$ is bounded in $\D^{1,p}\left(B_{k+1}\right)$ uniformly in $\alpha$. Since $\phi_\infty^\prime(v_\alpha) \to 0$ strongly in the dual of $\D^{1,p}\left(B_{k+1}\right)$,
	\[
	\inner{\phi_\infty^\prime(v_\alpha), \rho T(v_\alpha-v)} \to 0 \quad\text{as }\alpha\to\infty.
	\]
	Therefore, writing
	\begin{align*}
		I_\alpha^{(1)} = \inner{\phi_\infty^\prime(v_\alpha),  \rho T(v_\alpha-v)}
		&+ \underbrace{\int_{\RR^n}\abs{v_\alpha}^{q-2}v_\alpha \rho T(v_\alpha-v)\d{x}}_{=: J_\alpha^{(1)}}\\
		&\quad - \underbrace{\int_{\RR^n}\abs{\nabla v}^{p-2}\pinner{\nabla v,\nabla\left[\rho T(v_\alpha-v)\right]}\d{x}}_{=: J_\alpha^{(2)}},
	\end{align*}
	we have
	\[
	I_\alpha^{(1)} = o(1) + J_\alpha^{(1)} - J_\alpha^{(2)}
	\]
	as $\alpha\to\infty$. Using H\"older's inequality, we see that
	\[
	\abs{J_\alpha^{(1)}} \le \norm{v_\alpha}_{L^q(\RR^n)}^{q-1}\left(\int_{\RR^n}\rho^q(x)\abs{T(v_\alpha-v)}^q\d{x}\right)^{1/q}
	= o(1)
	\]
	as $\alpha\to\infty$. Here, we have used the dominated convergence theorem and applied the CKN inequality \eqref{eq:CKN} to deduce that $\norm{v_\alpha}_{L^q(\RR^n)}$ is bounded. On the other hand, as $\alpha\to\infty$,
	\begin{align*}
		J_\alpha^{(2)}
		&=\int_{\RR^n}\abs{\nabla v}^{p-2}\pinner{\nabla v,\nabla\left[\rho T(v_\alpha-v)\right]}\d{x}\\
		&=\int_{\abs{v_\alpha-v} \le 1}\abs{\nabla v}^{p-2}\pinner{\nabla v,\rho \nabla(v_\alpha-v)}\d{x} + \int_{\RR^n}\abs{\nabla v}^{p-2}\pinner{\nabla v, T(v_\alpha-v)\nabla\rho}\d{x}\\
		&=\int_{\abs{v_\alpha-v} \le 1}\abs{\nabla v}^{p-2}\pinner{\nabla v,\rho \nabla(v_\alpha-v)}\d{x} + o(1)
	\end{align*}
	where, to deduce the last equality, we have used H\"older's inequality and the dominated convergence theorem. We then split the right hand-side as follows:
	\[
	J_\alpha^{(2)}
	=\int_{\RR^n}\abs{\nabla v}^{p-2}\nabla v, \rho \nabla(v_\alpha-v)\d{x} - \int_{\abs{v_\alpha-v} > 1}\abs{\nabla v}^{p-2}\pinner{\nabla v, \rho \nabla(v_\alpha-v)}\d{x} + o(1).
	\]
	Note that the first term on the right-hand side tends to $0$ as $\alpha\to\infty$. Indeed, this is because $v_\alpha \rightharpoonup v$ weakly in $\D^{1,p}(\RR^n)$ or, equivalently, $\nabla v_\alpha\rightharpoonup \nabla v$ weakly in $L^p(\RR^n)$.
	\begin{align*}
		J_\alpha^{(2)}
		&=- \int_{\abs{v_\alpha-v} > 1}\abs{\nabla v}^{p-2}\pinner{\nabla v, \rho \nabla(v_\alpha-v)}\d{x} + o(1)
	\end{align*}
	With another application of H\"older's inequality we see that,
	\begin{align*}
		\abs{J_\alpha^{(2)}} 
		&\le \int_{\set{\abs{v_\alpha-v} > 1}\cap B_{k+1}}\abs{\nabla v}^{p-1} \abs{\nabla(v_\alpha-v)}\d{x} +o(1)\\
		&\le \norm{v_\alpha - v}_{\D^{1,p}(\RR^n)}\left(\int_{\set{\abs{v_\alpha-v} > 1}\cap B_{k+1}}\abs{\nabla v}^p\d{x}\right)^{1/p^\prime} + o(1)
	\end{align*}
	as $\alpha \to\infty$. Since $v_\alpha\to v$ pointwise a.e., the dominated convergence theorem ensures that the right hand side tends to $0$ as $\alpha\to \infty$. Especially, $J_\alpha^{(2)}\to 0$ as $\alpha\to\infty$.  
	
	Combining our result for $J_\alpha^{(1)}$ and $J_\alpha^{(2)}$, we see that $I_\alpha^{(1)} \to 0$ as $\alpha\to\infty$. Putting this together with $I_\alpha^{(2)} \to 0$ as $\alpha\to\infty$, we have shown that
	\[
	\int_{B_k \cap \Om_\infty} \pinner{f_\alpha, \nabla T(v_\alpha -v)}\d{x} \to 0\quad\text{as }\alpha\to\infty
	\]
	where
	\[
	f_\alpha = \abs{\nabla v_\alpha}^{p-2}\nabla v_\alpha - \abs{\nabla v}^{p-2}\nabla v.
	\]
	Therefore, by Corollary \ref{cor:gradientConvergence} (invoked with \(a=b=0\)), up to a subsequence, we have the following.
	\begin{enumerate}[label=(\arabic*)]
		\item $\nabla v_\alpha \to \nabla v$ almost everywhere in $\Om_\infty$. Using that $v_\alpha$ is supported in $\Om_\alpha$, $v_\alpha\to v$ pointwise almost everywhere and $\Om_\alpha \to \Om_\infty$, it readily follows that $\nabla v_\alpha \to \nabla v$  almost everywhere in $\RR^n$. 
		\item $\norm{v}_{\D^{1,p}(\Om_\infty)}^p = \norm{v_\alpha}_{\D^{1,p}(\Om_\infty)}^p - \norm{v_\alpha -v}_{\D^{1,p}(\Om_\infty)}^p + o(1)$.
		\item \label{conc:3} There holds
		\[
		\abs{\nabla v_\alpha}^{p-2}\nabla v_\alpha - \abs{\nabla v_\alpha - \nabla v}^{p-2}\left(\nabla v_\alpha - \nabla v\right) \to \abs{\nabla v}^{p-2}\nabla v.
		\]
		strongly in $L^{p^\prime}(\Om_\infty)$. Then, since $v\in \D^{1,p}(\Om_\infty)$ it immediately follows that the convergence remains true in $L^{p^\prime}(\RR^n)$.
		\item\label{conc:4} Finally,
		\[
		\abs{v_\alpha}^{q-2} v_\alpha - \abs{v_\alpha - v}^{q-2}\left(v_\alpha - v\right) \to \abs{v}^{q-2} v.
		\]
		strongly in $L^{q/(q-1)}(\Om_\infty)$ and hence in \(L^{q/(q-1)}(\RR^n)\).
	\end{enumerate}
	Next, fix $h\in C_c^\infty(\Om_\infty)$. Then, $h\in C_c^\infty(B_k)$ for all $k$ large; fix such an index $k$. Since, as stated in equation \eqref{eq:h energy}, $\phi^\prime(v_\alpha) \to 0$ strongly in the dual of $\D^{1,p}(B_k)$, there holds
	\begin{align*}
		0 &= \lim_{\alpha\to\infty}\inner{\phi_\infty^\prime(v_\alpha), h}\\
		&=\lim_{\alpha\to\infty}\left(\int_{\RR^n}\abs{\nabla v_\alpha}^{p-2}\pinner{\nabla v_\alpha, \nabla h}\d{x} - \int_{\RR^n}\abs{v_\alpha}^{q-2}v_\alpha h\d{x}\right)\\
		&=\int_{\RR^n}\abs{\nabla v}^{p-2}\pinner{\nabla v, \nabla h}\d{x} - \int_{\RR^n}\abs{v}^{q-2}v h\d{x}\\
		&=\inner{\phi_\infty^\prime(v), h}
	\end{align*}
	where the second equality is due to a standard convergence result, using that $v_\alpha \to v$ and $\nabla v_\alpha \to \nabla v$ pointwise almost everywhere. Since $h\in C_c^\infty(\Om_\infty)$ was arbitrary, we conclude that $\phi_\infty^\prime(v) = 0$ in $\Om_\infty$.
	
	Next, we establish the desired properties of the sequence $(w_\alpha)$. First, \ref{it:prop iteration a} follows from Lemma \ref{lem:w_alpha norms}. By this same lemma, we also see that
	\begin{align*}
		\phi_{0, \infty}(w_\alpha) 
		= \frac{1}{p}\norm{w_\alpha}_{\D_a^{1,p}(\RR^n,0)}^p - \frac{1}{q}\norm{w_\alpha}_{L_b^q(\RR^n,0)}
		&= \phi(u_\alpha) - \phi_\infty(v) + o(1)\\
		&= c-\phi_\infty(v) + o(1)
	\end{align*}
	as $\alpha \to\infty$. That is, we have \ref{it:prop iteration b}. Finally, by Lemma \ref{lem:energies every term},
	\[
	\inner{\phi_{0,\infty}^\prime(w_\alpha), g} = \inner{\phi^\prime(u_\alpha), g} - \inner{\phi_\infty^\prime(v), \tau_{y_\alpha, \lambda_\alpha}g} + o\left(\norm{g}_{\D_a^{1,p}(\Om,0)}\right)
	\]
	as $\alpha\to\infty$ over $g\in \D_a^{1,p}(\Om)$. Then, using that $\phi^\prime(u_\alpha)\to 0$ strongly in the dual of $\D_a^{1,p}(\Om)$ and applying Lemma \ref{lem:v energy transform g}, we conclude that
	\[
	\inner{\phi_{0,\infty}^\prime(w_\alpha), g}  = o\left(\norm{g}_{\D_a^{1,p}(\Om,0)}\right)
	\]
	which gives \ref{it:prop iteration c}.
	
\end{proof}

\subsection{Bubbling}
In the iteration lemmas, the terms subtracted from the Palais-Smale sequence \((u_\alpha)\) are referred to as ``bubbles".  We claim that, as \(\alpha\to\infty\), the bubble terms converge to \(0\) in suitable senses. Explicitly, we provide two results.

First, in the first iteration result, i.e. Proposition \ref{prop:iteration msc}, the subtracted terms are
\begin{equation}\label{eq:bubs}
	B_\alpha(z) := \lambda_\alpha^{-\gamma} v\left(\frac{z}{\lambda_\alpha}-x_0\right).
\end{equation}
Now, with condition \ref{it:lambda=0}, we have the following result:
\begin{lem}\label{lem:bubs}
	Let \(v\in \D_a^{1,p}(\RR^n, -x_0)\) and assume condition \ref{it:lambda=0} holds (i.e. $\lambda_\alpha \to 0$). Then, up to a subsequence,
	\begin{enumerate}[label=(\alph*)]
		\item $\nabla B_\alpha(x) \to 0$ pointwise almost everywhere on $\RR^n$.
		\item $B_\alpha \rightharpoonup 0$ weakly in $\D^{1,p}_a(\RR^n, 0)$ and pointwise almost everywhere;
	\end{enumerate}
	as \(\alpha\to\infty\). Here, \(B_\alpha\) are given by \eqref{eq:bubs}
\end{lem}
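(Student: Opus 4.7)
The strategy is to first establish boundedness of $(B_\alpha)$ in $\D_a^{1,p}(\RR^n,0)$, then to deduce $B_\alpha \rightharpoonup 0$ by a density argument, and finally to invoke Proposition \ref{prop:gradientConvergence} for the a.e.\ convergence of gradients in (a).

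The change of variables $w = z/\lambda_\alpha - x_0$, together with the identity $p\gamma = n - p(1+a)$ from \eqref{eq:exponent}, yields the homogeneity identity
\[
\norm{B_\alpha}_{\D_a^{1,p}(\RR^n,0)} = \norm{v}_{\D_a^{1,p}(\RR^n,-x_0)},
\]
so $(B_\alpha)$ is uniformly bounded. By reflexivity and the a.e.\ compactness recalled in \S2, up to a subsequence $B_\alpha \rightharpoonup B_\infty$ weakly and pointwise a.e.\ for some $B_\infty \in \D_a^{1,p}(\RR^n,0)$. To identify $B_\infty = 0$, I will approximate $v$ in norm by $v_\epsilon \in C_c^\infty(\RR^n)$ and set $B_\alpha^\epsilon(z) := \lambda_\alpha^{-\gamma}v_\epsilon(z/\lambda_\alpha - x_0)$. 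The same homogeneity identity forces $\norm{B_\alpha - B_\alpha^\epsilon}_{\D_a^{1,p}(\RR^n,0)} < \epsilon$ uniformly in $\alpha$, while $\supp B_\alpha^\epsilon \subseteq \lambda_\alpha(\supp v_\epsilon + x_0)$ collapses to $\set{0}$ as $\lambda_\alpha\to 0$. For any $\varphi$ in the dual, represented via \eqref{eq:Riesz} by some $\mathbf{g}\in L^{p^\prime}_a(\RR^n,0)$, H\"older's inequality restricted to the shrinking support together with the absolute continuity of the integral $\int\abs{\mathbf{g}}^{p^\prime}\abs{z}^{-ap}\,\d{z}$ (which is finite and hence vanishes on sets of measure tending to zero) gives $\varphi(B_\alpha^\epsilon)\to 0$. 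Hence $\limsup_\alpha \abs{\varphi(B_\alpha)} \le \norm{\varphi}\epsilon$; sending $\epsilon\to 0$ yields $B_\alpha \rightharpoonup 0$, proving (b) after noting that the weak and a.e.\ limits must agree.

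For (a), I plan to apply Proposition \ref{prop:gradientConvergence} with $u = 0$, $U = \RR^n$, and $U_k = B(0,k)$. Since $\nabla T(B_\alpha) = \nabla B_\alpha \cdot \mathbf{1}_{\set{|B_\alpha|\le 1}}$ by definition \eqref{eq:TDef}, the hypothesis \eqref{eq:lemGradient1} reduces to
\[
\int_{B(0,k)\cap \set{\abs{B_\alpha}\le 1}} \abs{\nabla B_\alpha}^p \abs{z}^{-ap}\,\d{z}\to 0.
\]
Another change of variables converts this into $\int_{A_k^\alpha} \abs{\nabla v(w)}^p \abs{w+x_0}^{-ap}\,\d{w}$ with $A_k^\alpha := \set{w\in\RR^n : \abs{w+x_0} \le k/\lambda_\alpha,\ \abs{v(w)}\le \lambda_\alpha^\gamma}$. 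Since $\gamma > 0$ and $\lambda_\alpha\to 0$, the indicator $\mathbf{1}_{A_k^\alpha}$ tends pointwise a.e.\ to $\mathbf{1}_{\set{v=0}}$, and dominated convergence combined with the fact (recalled in \S2) that $\nabla v$ vanishes a.e.\ on the level set $\set{v = 0}$ forces the limit to be zero. Proposition \ref{prop:gradientConvergence} then delivers a subsequence along which $\nabla B_\alpha \to 0$ a.e., establishing (a). The main technical obstacle lies in the weak-convergence step: it crucially exploits both the exact homogeneity of the $\D_a^{1,p}$-norm and the local integrability of the singular weight $\abs{z}^{-ap}$ (guaranteed by $ap < n$ under \eqref{eq:conditions}).
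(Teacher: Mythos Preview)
Your argument is correct, but it takes a genuinely different route from the paper's, and in one place a more circuitous one. The paper reverses your order: it first proves (a) by the elementary observation that, after the same change of variables you use,
\[
\int_{B(0,\epsilon)^\complement}\abs{\nabla B_\alpha}^p\abs{z}^{-ap}\,\d z
=\int_{B(-x_0,\epsilon/\lambda_\alpha)^\complement}\abs{\nabla v}^p\abs{x+x_0}^{-ap}\,\d x\to 0
\]
by monotone convergence, so a diagonal argument in $\epsilon$ yields $\nabla B_\alpha\to 0$ a.e.\ directly; then (b) follows from (a) via standard weak-convergence results plus uniqueness of limits. By contrast, your proof of (b) via density and absolute continuity of the Riesz representative is a clean self-contained argument that does not need (a) as input. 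However, your proof of (a) through Proposition~\ref{prop:gradientConvergence} and the level-set property $\nabla v=0$ a.e.\ on $\{v=0\}$, while valid, is considerably heavier than the paper's two-line computation; the paper never needs to truncate with $T$ or appeal to the vanishing of gradients on level sets for this lemma.
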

\begin{proof}
	Fix $\epsilon > 0$ and observe that, after a change of variables,
	\[
	\int_{ B(0,\epsilon)^\complement} \abs{ \nabla B_\alpha(z) }^p\abs{z}^{-ap}\d{x}
	=\int_{B\left(-x_0,\frac{\epsilon}{\lambda_\alpha}\right)^\complement} \abs{ \nabla v\left( x\right) }^p \abs{x+x_0}^{-ap}\d{x}.
	\]
	The right-hand side of this equation converges to $0$ by the monotone convergence theorem. Passing to a subsequence, we infer that $\nabla B_\alpha \to 0$ pointwise almost everywhere on $\RR^n \setminus B(0,\epsilon)$. As $\epsilon > 0$ was arbitrary, a diagonal argument gives the existence of a subsequence such that $\nabla B_\alpha \to 0$ almost everywhere on $\RR^n$. 
	
	Since $(\nabla B_\alpha)$ is bounded in $L^p_a(\RR^n, 0)$, a simple change of variables and standard convergence results show that $B_\alpha \rightharpoonup 0$ in $\D^{1,p}_a(\RR^n, 0)$. 
	
	On the other hand, since $(B_\alpha)$ is bounded in $\D^{1,p}_a(\RR^n, 0)$, up to a subsequence, it converges both weakly in $\D^{1,p}_a(\RR^n, 0)$ and pointwise almost everywhere to some function in $\D^{1,p}_a(\RR^n, 0)$. By uniqueness of the weak limit, we conclude that \(B_\alpha \to 0\) pointwise almost everywhere.
\end{proof}

Next, in Proposition \ref{prop:iteration}, the \emph{bubbles} take the form \(\tau^-_{y_\alpha, \lambda_\alpha}v\) and we obtain a similar convergence result:		
\begin{lem}\label{lem:bubbling}
	Let $v\in \D^{1,p}(\RR^n)$. If $a=b$ and condition \ref{it:lambda=0} holds (i.e. $\lambda_\alpha \to 0$) then, up to a subsequence,
	\begin{enumerate}[label=(\alph*)]
		\item $\nabla \left[\tau^-_{y_\alpha, \lambda_\alpha}v\right] \to 0$ pointwise almost everywhere
		\item $\tau^-_{y_\alpha, \lambda_\alpha}v\rightharpoonup 0$ weakly in $\D_a^{1,p}(\RR^n,0)$ and pointwise almost everywhere.
	\end{enumerate}
\end{lem}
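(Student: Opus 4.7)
\emph{Proof plan.} The strategy mirrors that of Lemma \ref{lem:bubs}, replacing the direct change of variables by the weighted bounds of Remark \ref{rem:rescale_mod_inv_bound_set}. Since $\Om$ is bounded and $y_\alpha\in\overline{\Om}$, I would first pass to a subsequence so that $y_\alpha\to y_\infty$ for some $y_\infty\in\overline{\Om}$. Next, Lemma \ref{lem:rescale_mod_inv_bound} with $a=b$ (so that the $(\abs{y_\alpha}/\lambda_\alpha)^{p(b-a)}$ prefactor equals $1$) shows that $(\tau^-_{y_\alpha,\lambda_\alpha}v)$ is bounded in the reflexive space $\D_a^{1,p}(\RR^n,0)$. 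Extracting again, I may thus assume
\[
\tau^-_{y_\alpha,\lambda_\alpha}v\rightharpoonup w\quad\text{weakly in }\D_a^{1,p}(\RR^n,0)\text{ and pointwise a.e.}
\]
for some $w\in\D_a^{1,p}(\RR^n,0)$; the goal is then to identify $w=0$ and to obtain pointwise a.e.\ convergence of the gradients.

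The heart of the matter is a localized decay: for every compact $K\subset\RR^n$ with $y_\infty\notin K$, I will show
\[
\norm{\nabla\left[\tau^-_{y_\alpha,\lambda_\alpha}v\right]}_{L_a^p(K, 0)} + \norm{\tau^-_{y_\alpha,\lambda_\alpha}v}_{L_b^q(K, 0)}\to 0.
\]
Applying Remark \ref{rem:rescale_mod_inv_bound_set} with the set $K$ and using $a=b$ to collapse the prefactor, one obtains a constant $C>0$ (independent of $\alpha$) such that
\[
\norm{\nabla\left[\tau^-_{y_\alpha,\lambda_\alpha}v\right]}_{L_a^p(K, 0)}\le C\left(\norm{v}_{L^q(V_\alpha)}+\norm{\nabla v}_{L^p(V_\alpha)}\right),
\]
where $V_\alpha:=(K-y_\alpha)/\lambda_\alpha$, with an analogous bound for the $L_b^q$ norm. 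Because $y_\alpha\to y_\infty\notin K$, we have $\operatorname{dist}(y_\alpha,K)\ge\epsilon_0>0$ for all $\alpha$ large, so $V_\alpha\subseteq\set{x\in\RR^n : \abs{x}\ge\epsilon_0/\lambda_\alpha}$. Since $\lambda_\alpha\to 0$ under \ref{it:lambda=0} and $v\in\D^{1,p}(\RR^n)\hookrightarrow L^q(\RR^n)$ (as $a=b$ forces $q=p^\ast$), the dominated convergence theorem sends the right-hand side to zero.

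Finally, I would choose an increasing exhaustion of $\RR^n\setminus\set{y_\infty}$ by compact sets and extract a diagonal subsequence along which the strong convergence on each compact set upgrades to pointwise a.e.\ convergence. Since $\set{y_\infty}$ is Lebesgue null, this yields $\nabla\left[\tau^-_{y_\alpha,\lambda_\alpha}v\right]\to 0$ and $\tau^-_{y_\alpha,\lambda_\alpha}v\to 0$ almost everywhere on $\RR^n$, giving (a) and the pointwise half of (b); uniqueness of the weak limit then forces $w=0$, completing (b). The chief technical point is the localized norm bound, which crucially exploits $a=b$ to kill the $(\abs{y_\alpha}/\lambda_\alpha)^{p(b-a)}$ factor that would otherwise blow up when $\abs{y_\alpha}/\lambda_\alpha\to\infty$; once this is in place, the remainder is a routine diagonal extraction.
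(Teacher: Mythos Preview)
Your proposal is correct and follows essentially the same approach as the paper: pass to a subsequence so that $y_\alpha\to y_\infty$, use Remark~\ref{rem:rescale_mod_inv_bound_set} (with the $a=b$ collapse of the prefactor) to get strong convergence of $\nabla[\tau^-_{y_\alpha,\lambda_\alpha}v]$ in $L^p_a$ on sets bounded away from $y_\infty$, then run a diagonal argument and identify the weak limit. The only cosmetic difference is that the paper works with complements of balls $B(y_\infty,\epsilon)$ rather than general compact sets avoiding $y_\infty$, and deduces the pointwise a.e.\ convergence of the functions themselves from a further extraction rather than from the $L^q_b$ local convergence you establish; both routes are equivalent.
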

\begin{proof}
	Up to a subseqeunce, we may suppose that $y_\alpha \to y$. Given an arbitrary $\epsilon>0$, observe that for all $\alpha$ large
	\[
	B\left(y_\alpha, \frac{\epsilon}{2}\right)\subset B(y, \epsilon).
	\]
	Therefore,
	\begin{align*}
		\int_{B(y, \epsilon)^\complement}\abs{\nabla\left[\tau^-_{y_\alpha, \lambda_\alpha}v\right]}^p\abs{z}^{-ap}\d{z} 
		&\le \int_{B(y_\alpha, \epsilon/2)^\complement}\abs{\nabla\left[\tau^-_{y_\alpha, \lambda_\alpha}v\right]}^p\abs{z}^{-ap}\d{z} \\
		&\le C\left(\norm{v}_{L^q\left(B(0, \epsilon/(2\lambda_\alpha))^\complement\right)} + \norm{\nabla v}_{L^p\left(B(0, \epsilon/(2\lambda_\alpha))^\complement\right)}\right)^p
	\end{align*}
	where the second inequality follows from Remark \ref{rem:rescale_mod_inv_bound_set} and $C>0$ is a constant independent of $\alpha$. Since $\lambda_\alpha\to 0$, the right-hand side tends to $0$ by the dominated convergence theorem as $\alpha\to\infty$. Therefore,
	\[
	\nabla\left[\tau^-_{y_\alpha, \lambda_\alpha}v\right] \to 0 \qquad\text{in } L^p\left(B(y, \epsilon)^\complement\right).
	\]
	Since $\epsilon > 0$ was arbitrary, a diagonal argument shows that, up to a subsequence, $\nabla \left[\tau^-_{y_\alpha, \lambda_\alpha}v\right] \to 0$ almost everywhere.

	By Lemma \ref{lem:rescale_mod_inv_bound}, the sequence $(\tau^-_{y_\alpha, \lambda_\alpha}v)$ is bounded in $\D_a^{1,p}(\RR^n,0)$. Then, utilizing also that $\nabla \left[\tau^-_{y_\alpha, \lambda_\alpha}v\right] \to 0$ pointwise almost everywhere, it follows from duality and a standard convergence result that \(\nabla \left[\tau^-_{y_\alpha, \lambda_\alpha}v\right] \rightharpoonup 0\) weakly in \(L_a^p(\RR^n, 0)\). Put otherwise, $\tau^-_{y_\alpha, \lambda_\alpha}v \rightharpoonup 0$ weakly in $\D_a^{1,p}(\RR^n,0)$. 
	
	On the other hand, since \(\tau^-_{y_\alpha, \lambda_\alpha}v\) is a bounded sequence in \(\D_a^{1,p}(\RR^n, 0)\), up to a subsequence we may assume that there is some \(w\in \D_a^{1,p}(\RR^n, 0)\) such that
	\begin{itemize}[label = \tiny$\bullet$]
		\item $\tau^-_{y_\alpha, \lambda_\alpha}v \rightharpoonup w$ weakly in $\D_a^{1,p}(\RR^n,0)$, and
		\item $\tau^-_{y_\alpha, \lambda_\alpha}v \to w$ pointwise almost everywhere.
	\end{itemize}
	Then, by uniqueness of weak limits, \(w\equiv 0\).
\end{proof}

\section{The Proof When $a<b$}

We now produce the proof of Theorem \ref{thm:main a<b}, i.e. we establish the main compactness result in the parameter case $a < b$. As previously stated, this setting can only produce weighted bubbles centered at the origin, which may also occur in the limit case $a=b$. For the sake of brevity and clarity, we decompose the proof into several distinct steps.\\

\noindent\textbf{Step 1}.
Being a Palais-Smale sequence, by Proposition \ref{prop:psBounded}, $(u_\alpha)$ is bounded in $\D^{1,p}_a(\Om , 0)$. Hence, passing to a subsequence, $u_\alpha$ converges weakly to some $v_0 \in \D^{1,p}_a(\Om, 0)$, with pointwise convergence almost everywhere on $\Om$, as $\alpha \to \infty$. Therefore, by Lemma \ref{lem:extraction}, the sequence $(u_\alpha^1)$ defined by $u_\alpha^1 := u_\alpha - v_0$ satisfies
\begin{enumerate}[label=(\roman*)]
	\item $\norm{u_\alpha^1}^p_{\D^{1,p}_a(\Om, 0)} = \norm{u_\alpha}^p_{\D^{1,p}_a(\Om, 0)} - \norm{v_0}^p_{\D^{1,p}_a(\Om, 0)} + o(1)$;
	\item $\phi(u_\alpha^1) \to c - \phi(v_0)$;
	\item $\phi^\prime(u_\alpha^1) \to 0$ in $\D^{-1,p^\prime}_a(\Om, 0)$.
\end{enumerate}
Furthermore, we find that $\phi^\prime(v_0) = 0$ and $\nabla u_\alpha \to \nabla v_0$ a.e. on $\RR^n$ as $\alpha\to \infty$. Especially, $v_0$ solves problem \eqref{eq:probOm}.\\

\noindent\textbf{Step 2}. If $u_\alpha^1 \to 0$ strongly in $L^q_b(\Om,0)$ then the proof is complete. Indeed, 
\begin{align*}
	\lim_{\alpha \to \infty} \int_{\Om}  \left( \abs{x}^{-ap}\abs{\nabla u_\alpha^1(x)}^{p}  - \abs{x}^{-bq} \abs{u_\alpha^1(x)}^{q} \right)\d{x}
	=\lim_{\alpha \to \infty} \abs{ \inner{\phi^\prime(u_\alpha^1), u_\alpha^1} } \to 0
\end{align*}
since $\phi^\prime(u_\alpha^1) \to 0$ in the dual of $\D_a^{1,p}(\Om)$ and the sequence $(u_\alpha^1)$ is bounded in $\D^{1,p}_a(\Om, 0)$. Hence, $\nabla u^1_\alpha \to 0$ strongly in $L^p_a(\Om, 0)$, i.e. $u_\alpha \to v_0$ strongly in $\D^{1,p}_a(\Om, 0)$. Thus, the conclusions of our theorem are satisfied for $k=0$ since
\[
\phi(v_0) = \lim_{\alpha \to \infty} \phi(u_\alpha).
\]
\noindent\textbf{Step 3}. If we do not satisfy the termination condition of Step 2, then, after passing to a subsequence,
\[
\inf_{\alpha \geqslant 1} \int_\Om \abs{u_\alpha^1}^q\abs{x}^{-bq}\mathrm{d}x > \delta
\]
for some $\delta > 0$. We may suppose without harm that
\begin{equation}\label{eq:delta}
	0 < \delta < \left( \frac{C_{a,b}}{2^p}\right)^{\frac{q}{q-p}}.
\end{equation}
where $C_{a,b} > 0$ is such that the CKN inequality \eqref{eq:CKNv2} holds.

Consider the family $\left\{ Q_\alpha \right\}_{\alpha \geqslant 1}$ of L\'evy concentration functions each defined by
\[
Q_\alpha(r) := \sup_{y \in \bar{\Om}} \int_{B(y,r)} \abs{u_\alpha^1}^q\abs{x}^{-bq}\d{x},
\]
where each $(u_\alpha)$ is extended by $0$ outside $\Omega$. Note that, by CKN inequality \eqref{eq:CKN}, one has $Q_\alpha(r) < \infty$ for every $r\ge 0$. By continuity of each $Q_\alpha$, we may apply the Intermediate Value Theorem to extract a smallest $\lambda_\alpha^1 > 0$ such that $Q_\alpha(\lambda_\alpha^1) = \delta$.

Finally, for each index $\alpha$, we may select $y_\alpha^1 \in \overline{\Om}$ such that
\begin{equation}\label{eq:delta a<b}
	Q_\alpha(\lambda_\alpha^1) = \sup_{y \in \bar{\Om}} \int_{B(y,\lambda_\alpha^1)} \abs{u_\alpha^1}^q\abs{x}^{-bq}\d{x} =  \int_{B(y_\alpha^1,\lambda_\alpha^1)} \abs{u_\alpha^1}^q\abs{x}^{-bq}\d{x} = \delta.
\end{equation}
Passing to a subsequence if necessary, we will assume that $\lambda_\alpha^1 \to \lambda^1 \ge 0$ and $y_\alpha^1 \to y^1 \in \overline{\Om}$ as $\alpha \to \infty$.\\

\noindent\textbf{Step 4}. We show that the sequence of points
\(
{y_\alpha^1}/{\lambda_\alpha^1}
\)	
is bounded in $\RR^n$. If not, passing to a subsequence we may assume that 
\[
\frac{\abs{y_\alpha^1}}{\lambda_\alpha^1} \to \infty, \quad \text{as } \alpha \to \infty,
\]
and that $\abs{y_\alpha^1} > 4\lambda_\alpha^1 > 0$ for each $\alpha \in \NN$. 
Consider the functions
\[
v_\alpha(x) := \tau_{y_\alpha^1, \lambda_\alpha^1} u_\alpha^1(x)
=\left( \frac{\lambda_\alpha^1}{\abs{y_\alpha^1}} \right)^b \left(\lambda_\alpha^1\right)^\gamma u_\alpha^1(\lambda_\alpha^1 x + y_\alpha^1)\eta\left( \frac{2\lambda_\alpha^1x}{\abs{y_\alpha^1}}\right)
\]
with $\tau$ defined as in equation \eqref{eq:rescale_mod}. Lemma  \ref{lem:rescale_mod_bound} then implies that $v_\alpha \in \D^{1,p}(\RR^n)$ with $v_\alpha \to 0$ strongly. Furthermore, by the Gagliardo-Nirenberg-Sobolev inequality, we also obtain $v_\alpha \to 0$ in $L^{p^\ast}(\RR^n)$. Using  that $p^\ast \ge q$, H\"older's inequality further asserts that $v_\alpha \to 0$ in $L^q_{\text{loc}}(\RR^n)$. However, Lemma \ref{lem:unit ball Lq norm} and  \eqref{eq:delta a<b} show that, as $\alpha \to \infty$,
\[
\int_{B(0,1)} \abs{v_\alpha(x)}^q\d{x} 
=  \int_{B\left( y_\alpha^1, \lambda_\alpha^1\right)} \abs{u_\alpha^1(z)}^q \abs{z}^{-bq}\d{z} + o(1)
= \delta + o(1),
\]
contradicting our deduction that $v_\alpha \to 0$ strongly in $L^q_{\text{loc}}(\RR^n)$.\\

Since \({y_\alpha^1}/{\lambda_\alpha^1}\) is bounded, passing to a subsequence, we may assume that
\[
\frac{y_\alpha^1}{\lambda_\alpha^1} \to x_0 \in \RR^n, \quad \text{as } \alpha \to \infty.
\]
We then define a sequence of functions via homogeneity:
\[
v_\alpha^1(x) = \left( \lambda_\alpha^1 \right)^\gamma u_\alpha^1(\lambda_\alpha^1(x+x_0)).
\]
It is clear that, for each $\alpha \in \NN$, $v_\alpha^1 \in \D_a^{1,p}(\Om_\alpha, -x_0)\subset \D^{1,p}_a(\RR^n,-x_0)$ where
\[
\Om_\alpha := \frac{\Om - \lambda_\alpha^1 x_0}{\lambda_\alpha^1} = \frac{\Om}{\lambda_\alpha^1} - x_0.
\]
By homogeneity, we mean that
\[
\norm{v_\alpha^1}_{\D_a^{1,p}(\RR^n, -x_0)} = \norm{u_\alpha}_{\D_a^{1,p}(\Om, 0)},\qquad
\norm{v_\alpha^1}_{L_b^q(\RR^n, -x_0)} = \norm{u_\alpha}_{L_b^q(\Om, 0)}
\] 
In particular, $(v_\alpha^1)$ is a bounded sequence in $\D_a^{1,p}(\RR^n, -x_0)$ whence we may assume without loss of generality that
\[
v_\alpha^1 \rightharpoonup v^1 \text{ weakly in } \D_a^{1,p}(\RR^n, -x_0),\quad
v_\alpha^1 \to v^1 \text{ a.e. in }\RR^n
\]
for some $v^1\in \D_a^{1,p}(\RR^n, -x_0)$. We note that, by a change of variables
\begin{equation}\label{eq:delta v a<b}
	\sup_{y \in \overline{\Om_\alpha}} \int_{B(y,1)} \abs{v_\alpha^1(x)}^q\abs{x+x_0}^{-bq}\d{x} = \sup_{w \in \overline{\Om}} \int_{B(w, \lambda_\alpha^1)} \abs{u_\alpha^1(z)}^q\abs{z}^{-bq}\d{z} = \delta
\end{equation}
where the last equality follows from equation \eqref{eq:delta a<b}. 

\noindent\textbf{Step 5}. For a given $h \in \D^{1,p}_a(\Om_\alpha, -x_0)$, define $h_\alpha \in \D^{1,p}_a(\Om, 0)$ via the homogeneous transform:
\[
h_\alpha(z) := \left(  \lambda_\alpha^1 \right)^{-\gamma}h\left( \frac{z}{\lambda_\alpha^1} - x_0\right).
\]
A change of variables shows that
\[
\inner{\phi^\prime_{x_0, \infty}(v_\alpha^1), h} = \inner{\phi^\prime(u_\alpha^1), h_\alpha}.
\]
Since $\phi^\prime(u_\alpha^1)$ is a continuous linear functional on $\D^{1,p}_a(\Om, 0)$, for each $\alpha \in \NN$, by a Riesz-type  representation result, see \eqref{eq:Riesz}, there exist functions $f_\alpha^{(1)},\dots,f_\alpha^{(n)} \in L^{p^\prime}(\Om, \abs{x}^{-ap})$ so that
\[
\inner{\phi^\prime(u_\alpha^1), h} =\sum_{i=1}^n  \int_{\Om} f^{(i)}_\alpha(x) \partial_i h(x)\abs{x}^{-ap}\d{x}, \quad \forall h \in \D^{1,p}_a(\Om, 0).
\]
Of course, $p^\prime$ denotes the H\"older conjugate exponent to $p$.
Moreover, since $\phi^\prime(u_\alpha^1) \to 0$ in the strong operator topology,
\begin{align}\label{eq:step5f}
	\sum_{i=1}^n \int_{\Om} \abs{f^{(i)}_\alpha(x)}^{p^\prime}\abs{x}^{-ap}\d{x} \to 0, \quad \text{as } \alpha \to \infty.
\end{align}
Therefore, we may write
\begin{align*}
	\inner{\phi^\prime_{x_0,\infty}(v_\alpha^1), h} 
	&= \inner{\phi^\prime(u_\alpha^1), h_\alpha}\\
	&= \sum_{i=1}^n \int_\Om f^{(i)}_\alpha(z)\partial_i h_\alpha(z)\abs{z}^{-ap}\d{z}\\
	&= \left(\lambda_\alpha^1\right)^{n - ap - (\gamma+1)}\sum_{i=1}^n \int_{\Om_\alpha} f^{(i)}_\alpha(\lambda_\alpha^1(x+x_0))\partial_i h\left( x\right)\abs{x+x_0}^{-ap}\d{x}.
\end{align*}
It follows that
\begin{equation}\label{eq:step5identity}
	\inner{\phi_{x_0,\infty}(v_\alpha^1),h} =\sum_{i=1}^n \int_{\Om_\alpha} g^{(i)}_\alpha(x)\partial_i h\left( x\right)\abs{x+x_0}^{-ap}\d{x},
\end{equation}
where we have defined
\[
g_\alpha^{(i)}(x) := \left( \lambda_\alpha^1 \right)^{\frac{n-ap}{p^\prime}} f_\alpha^{(i)}\left( \lambda_\alpha^1(x+x_0)\right).
\]
Furthermore, by another change of variables,
\begin{equation}\label{eq:step5g}
	\sum_{i=1}^n \int_{\Om_\alpha} \abs{g_\alpha^{(i)}(x)}^{p^\prime} \abs{x+x_0}^{-ap}\d{x}
	=\sum_{i=1}^n \int_\Om \abs{f_\alpha^{(i)}(z)}^{p^\prime}\abs{z}^{-ap}\d{z}
	=o(1)
\end{equation}
as $\alpha \to \infty$ because of \eqref{eq:step5f}.\\

\noindent\textbf{Step 6}. By way of contradiction, we claim $v^1 \neq 0$. Assume $v^1 = 0$ so that $v_\alpha^1 \rightharpoonup 0$ weakly in $\D^{1,p}_a(\RR^n,-x_0)$ and pointwise a.e. on $\RR^n$ as $\alpha \to \infty$. By the compactness of the embedding $\D^{1,p}_a(\RR^n,-x_0) \hookrightarrow L^p_{\text{loc}}(\RR^n, \abs{x+x_0}^{-ap})$, we may assume that $v_\alpha^1 \to 0$ strongly in $L^p_{\text{loc}}(\RR^n, \abs{x+x_0}^{-ap})$ whence $(v_\alpha^1)$ is bounded in $L^p_{\text{loc}}(\RR^n, \abs{x+x_0}^{-ap})$. By Proposition \ref{prop:domain convergence x_0}, \(\Om_\alpha\to\Om_\infty\) where 
\[
\Om_\infty := \begin{dcases}
	\RR^n & \text{if } \lambda^1 = 0,\\
	\frac{\Om}{\lambda^1} - x_0 & \text{if } \lambda^1 > 0,
\end{dcases}
\] 
and let $\mathcal{C}$ be a countable cover of $\RR^n$ such that if $\Lambda\in \mathcal{C}$ then either 
\begin{enumerate}
	\item $\Lambda = B(y, 1/2)$ for some $y\in\Om_\infty$ or
	\item $\Lambda$ is compactly contained in the complement of $\overline{\Om_\infty}$.
\end{enumerate} 
If $\Lambda\in \mathcal{C}$ is compactly contained in the complement of $\overline{\Om_\infty}$, then \(\Lambda\) and \(\Om_\alpha\) are disjoint for all \(\alpha\) large since \(\Om_\alpha\to\Om_\infty\). Therefore, $v_\alpha^1$ vanishes on $\Lambda$ for all $\alpha$ large and, in particular, $\nabla v_\alpha^1 \to 0$ in $L^p_a\left(\Lambda, -x_0\right)$.

On the other hand, suppose $\Lambda = B(y,1/2)$ for some $y \in \Om_\infty$. Since \(\Om_\alpha \to\Om_\infty\), $y \in \Om_\alpha$ for all $\alpha$ sufficiently large. For any $h \in C_c^\infty(\RR^n; [0, 1])$ with $\operatorname{supp}(h) \subset B(y,1)$, using H\"older's inequality with the CKN-inequality \eqref{eq:CKNv2} then yields

\begin{align*}
	\int_{\RR^n} &\abs{h}^p\abs{v_\alpha^1}^q \abs{x+x_0}^{-bq}\d{x}\\
	&= \int_{\operatorname{supp}(h)} \abs{v_\alpha^1 h}^p\abs{v_\alpha^1}^{q-p} \abs{x+x_0}^{-bq}\d{x}\\
	&\le \left(  \int_{\operatorname{supp}(h)} \abs{v_\alpha^1}^q\abs{x+x_0}^{-bq} \d{x}\right)^{\frac{q-p}{q}} \left(  \int_{\operatorname{supp}(h)} \abs{hv_\alpha^1}^q\abs{x+x_0}^{-bq}\d{x} \right)^{p/q}\\
	&\le C_{a,b}^{-1} \left(  \int_{\operatorname{supp}(h)} \abs{v_\alpha^1}^q\abs{x+x_0}^{-bq} \d{x}\right)^{\frac{q-p}{q}} \left(  \int_{\operatorname{supp}(h)} \abs{\nabla (hv_\alpha^1)}^p\abs{x+x_0}^{-ap}\d{x} \right).\\
\end{align*}
Then, bounding the first integral with equations \eqref{eq:delta v a<b} and \eqref{eq:delta},
\begin{equation}\label{eq:step6-1}
	\int_{\RR^n} \abs{h}^p\abs{v_\alpha^1}^q \abs{x+x_0}^{-bq}\d{x}
	\le 2^{-p}\int_{\operatorname{supp}(h)} \abs{\nabla (hv_\alpha^1)}^p\abs{x+x_0}^{-ap}\d{x}.
\end{equation}
On the other hand, since $v_\alpha^1 \to 0$ in $L^p_{\text{loc}}(\RR^n, \abs{x+x_0}^{-ap})$,
\begin{align*}
	\int_{\RR^n} \abs{\nabla (hv_\alpha^1)}^p\abs{x+x_0}^{-ap}\d{x}
	&\leq 2^{p-1}\int_{\RR^n}\abs{h}^p\abs{\nabla v_\alpha^1}^p \abs{x+x_0}^{-ap}\d{x} + o(1)\\
	&=2^{p-1}\int_{\RR^n} \abs{\nabla v_\alpha^1}^{p-2} \pinner{\nabla v_\alpha^1, \nabla \left[\abs{h}^pv_\alpha^1\right]}\abs{x+x_0}^{-ap}\d{x} + o(1)\\
	&= 2^{p-1}\left[ \inner{\phi^\prime_{x_0,\infty}(v_\alpha^1), \abs{h}^pv_\alpha^1}  + \int_{\RR^n} \abs{v_\alpha^1}^{q}\abs{h}^p\abs{x+x_0}^{-bq}\right] + o(1).
\end{align*}
Next, by Step 5 equations \eqref{eq:step5identity}-\eqref{eq:step5g}, we see that as $\alpha \to \infty$ 
\begin{align*}
	\inner{\phi^\prime_{x_0,\infty}(v_\alpha^1), \abs{h}^pv_\alpha^1} &= \sum_{i=1}^n \int_{\Om_\alpha} g_\alpha^{(i)} \partial_i\left( \abs{h}^pv_\alpha^1 \right)\abs{x+x_0}^{-ap}\d{x}
	= o(1)
\end{align*}
where we have used that $\left(\abs{h}^pv_\alpha^1\right)$ is bounded in $\D^{1,p}_a(\Om_\alpha, -x_0)$. Ergo, concatenating with \eqref{eq:step6-1},
\begin{align*}
	\int_{\RR^n} \abs{\nabla (hv_\alpha^1)}^p\abs{x+x_0}^{-ap}\d{x}
	&\le \frac{1}{2} \int_{\RR^n} \abs{\nabla (hv_\alpha^1)}^p\abs{x+x_0}^{-ap}\d{x} + o(1).
\end{align*}
We deduce that $\int_{\RR^n} \abs{\nabla (hv_\alpha^1)}^p\abs{x+x_0}^{-ap}\d{x} \to 0$. Taking $h \equiv 1$ on $\Lambda = B(y,1/2)$, we infer that
\(
\nabla v_\alpha^1 \to 0 \text{ in } L^p_a\left(\Lambda, -x_0\right).
\) 
Having shown that this convergence holds for all $\Lambda\in\mathcal{C}$, it follows that
\(
\nabla v_\alpha^1 \to 0$ in $L^p_{loc}(\RR^n, \abs{x+x_0}^{-ap})
\).
Multiplying with a cutoff function and applying the CKN-inequality, this implies that 
\(
v_\alpha^1 \to 0
\)
strongly in \(L^q_{\text{loc}}(\RR^n, \abs{x+x_0}^{-bq})\) as $\alpha \to \infty$. However, this contradicts by \eqref{eq:delta a<b} since
\begin{align*}
	\int_{B(0,2)} \abs{v_\alpha^1}^q\abs{x+x_0}^{-bq}\d{x}
	&\ge \int_{B\left( \frac{y_\alpha^1}{\lambda_\alpha^1} - x_0, 1 \right)} \abs{v_\alpha^1}^q\abs{x+x_0}^{-bq}\d{x}\\
	&= \int_{B\left(y_\alpha^1, \lambda_\alpha^1 \right)} \abs{u_\alpha^1(z)}^q\abs{z}^{-bq}\d{x}\\
	&= \delta > 0
\end{align*}
for all $\alpha \in \NN$ sufficiently large.\\

\noindent\textbf{Step 7}. We claim that $\lambda ^ 1= \lim_{\alpha \to \infty} \lambda_\alpha^1 = 0$. To this end, we argue by contradiction and assume instead that $\lambda^1 > 0$. Given any $\varphi \in C_c^\infty(\RR^n; \RR^n)$, a straightforward change of variables shows that
\begin{equation}\label{eq:step7}
	\begin{aligned}
		\int_{\RR^n} &\pinner{\nabla v_\alpha^1(x),  \varphi (x)}\abs{x+x_0}^{-ap}\d{x}\\
		&= (\lambda_\alpha^1)^{(\gamma+1) - n + ap} \int_{\RR^n} \pinner{\nabla u_\alpha^1(z), \varphi\left(  \frac{z}{\lambda_\alpha^1} - x_0 \right)}\abs{z}^{-ap}\d{z}.
	\end{aligned}
\end{equation}
As the sequence $(\lambda_\alpha^1)$ is bounded, there is a compact set $\Lambda$ such that, for each $\alpha\in\NN$, the function
\[
z \mapsto \varphi\left(  \frac{z}{\lambda_\alpha^1} - x_0 \right)
\]
is supported on $\Lambda$. Following this, since $\nabla u_\alpha^1 \to 0$ pointwise almost everywhere on $\RR^n$ and is bounded in $L^p_a(\RR^n, 0)$, standard convergence results imply that
\[
\int_{\varLambda} \abs{\nabla u_\alpha^1(z)}\abs{z}^{-ap}\d{z} \to 0, \quad \text{as } \alpha \to \infty.
\]
Using this and that $\lambda_\alpha^1\to\lambda_1 > 0$, it follows from \eqref{eq:step7} that
\[
\int_{\RR^n} \pinner{\nabla v_\alpha^1(x), \varphi (x)} \abs{x+x_0}^{-ap}\d{x}\to 0, \quad \text{as } \alpha \to \infty.
\]
Since smooth functions of compact support are dense in $L^{p^\prime}(\RR^n, \abs{x+x_0}^{-ap})$, it readily follows that $\nabla v_\alpha^1 \rightharpoonup 0$ weakly in $L^p_a(\RR^n, -x_0)$. Put otherwise, $v_\alpha^1 \rightharpoonup 0$ in $\D^{1,p}_a(\RR^n,-x_0)$ which directly contradicts Step 6.\\

\noindent\textbf{Step 8}.  Since $v^1 \in \D^{1,p}_a(\RR^n,-x_0)$ and $\Om_\alpha \to \RR^n$, there exists a sequence functions $\psi_\alpha \in C_c^\infty(\Om_\alpha)$ such that $\psi_\alpha \to v^1$ strongly in $\D^{1,p}_a(\RR^n,-x_0)$ as $\alpha \to \infty$. 

Consider the modified sequence given by a homogeneous transform:
\[
\tilde{\psi}_\alpha(z) := \left(\lambda_\alpha^1\right)^{-\gamma}\psi_\alpha\left(\frac{z}{\lambda_\alpha^1}-x_0\right).
\]
Since $\psi_\alpha \to v^1$ strongly in $\D^{1,p}_a(\RR^n,-x_0)$, a change of variables (i.e. homogeneity) shows that
\[
\tilde{\psi}_\alpha (\cdot)- \left( \lambda_\alpha^1 \right)^{-\gamma} v^1\left( \frac{\cdot}{\lambda_\alpha^1} - x_0 \right) \to 0 \quad \text{in } \D^{1,p}_a(\RR^n, 0).
\]
Therefore, after an application of Lemma \ref{lem:bubs} (with \(v=v^1\)), we deduce that
\begin{enumerate}[label=(\roman*)]
	\item $\nabla \tilde{\psi}_\alpha \to 0$ pointwise almost everywhere on $\RR^n$;
	\item $\tilde{\psi}_\alpha \rightharpoonup 0$ weakly in $\D^{1,p}_a(\Om, 0)$ and pointwise almost everywhere $\alpha \to \infty$.\\
\end{enumerate}

\noindent\textbf{Step 9}. We outline the iteration process. Applying Proposition \ref{prop:iteration msc}, we see that $v^1$ solves a translated version of \eqref{eq:probLim} and the sequence
\[
u_\alpha^2(x) := u_\alpha^1(x) - \left(\lambda_\alpha^{(1)}\right)^{-\gamma} v^1\left( \frac{x}{\lambda_\alpha^{(1)}} -x_0^{1}\right),
\]
for $\lambda_\alpha^{(1)} := \lambda_\alpha^1$, satisfies
\[
\norm{u_n^2}_{\D^{1,p}_a(\RR^n, 0)}^p 
= \norm{u_\alpha}^p_{\D^{1,p}_a(\Om, 0)}
- \norm{v_0}^p_{{\D^{1,p}_a(\Om, 0)}}
- \norm{v^1}_{\D^{1,p}(\RR^n,-x_0^{(1)})}^p + o(1)
\]
and
\begin{align*}
	\begin{cases}
		\phi_{0,\infty}(u_\alpha^2) \to c - \phi(v_0) - \phi_{x_0^{(1)},\infty}(v^1),\\
		\phi^\prime_{0,\infty}(u_\alpha^2) \to 0  \text{ in } \D^{-1,p^\prime}_a(\Om, 0).
	\end{cases}
\end{align*}
Next, we consider the auxiliary sequence
\[
\tilde{u}_\alpha^2(x) := u_\alpha^1(x) - \tilde{\psi}_\alpha(x)
\]
which is bounded in $\D^{1,p}_a(\Om, 0)$.  Because $\psi_\alpha \to v^1$ strongly in $\D^{1,p}(\RR^n,-x_0^{1})$, 
\[
\begin{cases}
	\phi\left(\tilde{u}^2_\alpha\right) \to  c - \phi(v_0) - \phi_{x_0^{(1)},\infty}(v^1),\\
	\phi^\prime(\tilde{u}_\alpha^2) \to 0  \text{ in } \D^{-1,p^\prime}_a(\Om, 0).
\end{cases}
\]

\noindent We may therefore apply Steps 2-9 to this new sequence $\left(\tilde{u}_\alpha^2\right)$, at each stage removing another solution $v^j$. \\

\noindent\textbf{Step 10}. For each index $j$, a suitable translation of $v^j$ solves \eqref{eq:probLim}. Now, recalling Lemma \ref{lem:energy}, note that Palais-Smale sequences for problem \eqref{eq:probOm} have non-negative limiting energy. Moreover, by Proposition \ref{prop:energy}, the energies of non-trivial critical points for the weighted limiting problem \eqref{eq:probLim} are bounded below by a uniform positive constant. Thus, this procedure can only happen finitely many times
before we reach the termination condition in Step 2.

\section{The Proof When $a=b$}

We now begin the proof of Theorem \ref{thm:main a=b}, which contains the entire proof of Theorem \ref{thm:main a<b} as a sub-argument. Naturally, the proof begins in the same way (first extracting a solution to \eqref{eq:probOm} and recovering another $(PS)$-sequence by removing said solution).\\

\noindent\textbf{Steps 1-2-3}. Steps 1,2 are identical to their respective counterparts from the proof in the $a<b$ case and we modify only the upper bound on $\delta >0$ in step 3. 

More precisely, as in step 1, we deduce that there is some $v_0 \in \D^{1,p}_a(\Om, 0)$ such that, with $u_\alpha^1 := u_\alpha - v_0$, up to a subsequence,
\begin{enumerate}[label=(\roman*)]
	\item $u_\alpha \rightharpoonup v_0$ weakly in $\D_a^{1,p}(\Om, 0)$ and pointwise almost everywhere;
	\item $\norm{u_\alpha^1}^p_{\D^{1,p}_a(\Om, 0)} = \norm{u_\alpha}^p_{\D^{1,p}_a(\Om, 0)} - \norm{v_0}^p_{\D^{1,p}_a(\Om, 0)} + o(1)$;
	\item $\phi(u_\alpha^1) \to c - \phi(v_0)$;
	\item $\phi^\prime(u_\alpha^1) \to 0$ in $\D^{-1,p^\prime}_a(\Om, 0)$;
	\item  $\nabla u_\alpha \to \nabla v_0$ and pointwise almost everywhere;
\end{enumerate}
as $\alpha \to\infty$. Moreover $\phi^\prime(v_0) = 0$ so $v_0$ solves problem \eqref{eq:probOm}. Then, as in step 2, the proof terminates if $u_\alpha^1 \to 0$ strongly in $L_b^q(\Om, 0)$. Thus, we proceed to step 3 assuming that $u_\alpha^1$ does not converge to $0$ strongly in $L_b^q(\Om, 0)$. In the latter case, up to a subsequence, we may suppose that for each index $\alpha$ there holds
\begin{equation}\label{eq:levy a=b}
	\sup_{y \in \bar{\Om}} \int_{B(y,\lambda_\alpha^1)} \abs{u_\alpha^1}^q\abs{x}^{-bq}\d{x} =  \int_{B(y_\alpha^1,\lambda_\alpha^1)} \abs{u_\alpha^1}^q\abs{x}^{-bq}\d{x} = \delta.
\end{equation}
for some $0 < \lambda_\alpha^1 \le \operatorname{diam}(\Om)$ and $y_\alpha^1 \in \overline{\Omega}$. Here, 
\begin{equation}\label{eq:delta a=b}
	0 < \delta < \frac{1}{C_\tau^q}\left( \frac{C_{a,b}}{2^p}\right)^{\frac{q}{q-p}},
\end{equation}
where $C_{a,b} > 0$ is such that the CKN inequality \eqref{eq:CKNv2} holds and $C_\tau>0$ is the constant from Lemma \ref{lem:rescale_mod_bound}. Finally, passing to another subsequence if necessary, we further assume that $\lambda_\alpha^1 \to \lambda^1 \ge 0$ and $y_\alpha^1 \to y^1 \in \overline{\Om}$ as $\alpha \to \infty$.\\

\noindent\textbf{Step 4}. The following dichotomy holds: either $y_\alpha^1/\lambda_\alpha^1$ is bounded or unbounded. In the former case, we pass to a subsequence so that
\[
\frac{{y_\alpha^1}}{\lambda_\alpha^1} \to x_0 \in \RR^n
\]
and proceed exactly as in the proof with $a<b$ (i.e. we  follow Steps 5-10 from the previous section). Otherwise, we may pass to a subsequence in such a way that
\[
\frac{\abs{y_\alpha^1}}{\lambda_\alpha^1} \to \infty \quad \text{and} \quad \abs{y_\alpha^1} > 4\lambda_\alpha^1.
\]
In this case, it is automatic that $\lambda_\alpha^1 \to 0$ as $\alpha \to \infty$. Subsequently, we may define
\[
v_\alpha^1 := \tau_{y_\alpha^1, \lambda_\alpha^1}u_\alpha^1 \in \D^{1,p}(\RR^n).
\]
Then, $v_\alpha^1\in \D^{1,p}(\Om_\alpha) $ in $ \D^{1,p}(\RR^n)$ where
\[
\Om_\alpha := \frac{\Om - y_\alpha^1}{\lambda_\alpha^1}
\]
and, by Lemma \ref{lem:rescale_mod_bound}, the sequence $(v_\alpha^1)$ is bounded in $\D^{1,p}(\RR^n)$. Therefore, there exists $v^1\in \D^{1,p}(\RR^n)$ such that
\[
v_\alpha^1\rightharpoonup v^1 \text{ weakly in }\D^{1,p}(\RR^n),\quad
v_\alpha^1 \to v^1\text{ a.e. in }\RR^n.
\]
Furthermore, by Lemma \ref{lem:unit ball Lq norm} and \eqref{eq:levy a=b},
\[
\int_{B(0,1)}\abs{v_\alpha^1}^q\d{x} = \delta + o(1)\qquad\text{as}\quad\alpha\to\infty.
\]
On the other hand, by Remark \ref{rem:rescale_mod_bound_set} and \eqref{eq:levy a=b}, we see that
\[
\sup_{y\in \overline{\Om_\alpha}}\int_{B(y,1)}\abs{v_\alpha^1}^q\d{x} 
\le C_\tau^q\sup_{y\in\overline{\Om}}\int_{B(y, \lambda_\alpha^1)}\abs{u_\alpha^1}^q\abs{z}^{-bq}\d{x}
= C_\tau^q\delta
\]
as $\alpha\to \infty$. Combining these last two equations, we obtain
\begin{equation}\label{eq:delta bound}
	\delta + o(1) 
	\le \sup_{y\in \overline{\Om_\alpha}}\int_{B(y,1)}\abs{v_\alpha^1}^q\d{x} 
	\le C_\tau^q\delta 
	\le \left(\frac{C_{a,b}}{2^p}\right)^{\frac{q}{q-p}}
\end{equation}
where, for the last step, we used \eqref{eq:delta a=b}.\\

\noindent\textbf{Step 5}. Fix a compact set $K\subseteq \Om$ and let $(h_\alpha)$ in $ \D^{1,p}(\RR^n)$ be a bounded sequence with $h_\alpha\in \D^{1,p}(\Om_\alpha)$ and $h_\alpha$ supported in $K$ for each index $\alpha \in \NN$. It follows from Lemma \ref{lem:transform energy} and the CKN-inequality \eqref{eq:CKN} that
\begin{equation}\label{eq:2.7}
	\inner{\phi^\prime_\infty(v_\alpha^1), h_\alpha} = \inner{\phi^\prime(u_\alpha^1), \tau^-_{y_\alpha^1, \lambda_\alpha^1}h_\alpha} + o\left(1\right)
\end{equation}
as $\alpha \to\infty$. By a Riesz-type  representation result, i.e. characterization \eqref{eq:Riesz}, we may find, for each $\alpha$, functions $f_\alpha^{(1)}, \dots, f_\alpha^{(n)}$ in $L_a^{p^\prime}(\Om)$ such that
\[
\inner{\phi^\prime(u_\alpha^1), h} = \sum_{i=1}^n\int_\Om f_\alpha^{(i)}(z)\partial_ih(z)\abs{z}^{-ap}\d{z},\qquad \forall h\in \D_a^{1,p}(\Om,0).
\]
Moreover, since $\phi^\prime(u_\alpha^1) \to 0$ strongly in the dual of $\D_a^{1,p}(\Om,0)$,
\begin{equation}\label{eq:f_i norms}
	\sum_{i=1}^n\int_\Om\abs{f_\alpha^{(i)}(z)}^{p^\prime}\abs{z}^{-ap}\d{z} \to 0,\quad\text{as }\alpha\to\infty.
\end{equation}
Combining \eqref{eq:2.7} with the aforementioned functional representation, we may write
\begin{align*}
	\inner{\phi^\prime_\infty(v^1_\alpha), h_\alpha} 
	&= \sum_{i=1}^n\int_\Om f_\alpha^{(i)}(z)\partial_i \left[\tau^-_{y_\alpha^1, \lambda_\alpha^1}h_\alpha\right](z)\abs{z}^{-ap}\d{z} + o(1)\\
	&= \sum_{i=1}^n\int_\Om f_\alpha^{(i)}(z)\partial_i \left[\tau^-_{y_\alpha^1, \lambda_\alpha^1}h_\alpha\right](z)\left[\abs{z}^{-ap} - \abs{y_\alpha^1}^{-ap}\right]\d{z} \\
	&\qquad + \sum_{i=1}^n\int_\Om f_\alpha^{(i)}(z)\partial_i \left[\tau^-_{y_\alpha^1, \lambda_\alpha^1}h_\alpha\right](z)\abs{y_\alpha^1}^{-ap}\d{z} + o(1).
\end{align*}
Since $h_\alpha$ is supported in $K$, each integrand is supported in $\lambda_\alpha^1 K + y_\alpha^1$; then, using Lemma \ref{lem:cutoff tools}, we see that for all $\alpha$ large and each index $i=1, \dots, n$
\begin{align*}
	&\abs{\int_\Om f_\alpha^{(i)}(z)\partial_i \left[\tau^-_{y_\alpha^1, \lambda_\alpha^1}h_\alpha\right](z)\left[\abs{z}^{-ap} - \abs{y_\alpha^1}^{-ap}\right]\d{z}}\\
	&\le C_K\frac{\lambda_\alpha^1}{\abs{y_\alpha^1}} \int_\Om \abs{f_\alpha^{(i)}(z)}\abs{\partial_i \left[\tau^-_{y_\alpha^1, \lambda_\alpha^1}h_\alpha\right](z)}\abs{z}^{-ap}\d{z}\\
	&\le C_K\frac{\lambda_\alpha^1}{\abs{y_\alpha^1}}  \norm{f_\alpha^{(i)}}_{L_a^{p^\prime}(\Om,0)}\norm{h_\alpha}_{\D^{1,p}(\RR^n)} = o(1)
\end{align*}
where we have used H\"older's inequality, Lemma \ref{lem:rescale_mod_inv_bound} and equation \eqref{eq:f_i norms}. Returning to our earlier expression, we have
\begin{align*}
	\inner{\phi^\prime_\infty(v^1_\alpha), h_\alpha} 
	&=\sum_{i=1}^n\int_\Om f_\alpha^{(i)}(z)\partial_i \left[\tau^-_{y_\alpha^1, \lambda_\alpha^1}h_\alpha\right](z)\abs{y_\alpha^1}^{-ap}\d{z} + o(1)\\
	&= \left(\frac{\lambda_\alpha^1}{\abs{y_\alpha^1}}\right)^{ap-2b}\left(\lambda_\alpha^1\right)^{n-2\gamma-1-ap}\int_{\Om_\alpha} \left[\tau_{y_\alpha^1, \lambda_\alpha^1} f_\alpha^{(i)}\right] \partial_i h_\alpha\d{x} + o(1)
\end{align*}
Here, we have used that, since each $h_\alpha$ is supported in $K$, the cut-off $\eta$ in the transform reduces to $1$ on the integrand's support for all $\alpha$ large. Hence,
\begin{equation}\label{eq:step5mid}
	\inner{\phi^\prime_\infty(v^1_\alpha), h_\alpha} 
	=\sum_{i=1}^n\int_{\RR^n} g_\alpha^{(i)}\partial_i h_\alpha\d{x} + o(1)
\end{equation}
where, extending each $f_\alpha^{(i)}$ by 0 outside of $\Om$,
\[
g_\alpha^{(i)} := \left(\frac{\lambda_\alpha^1}{\abs{y_\alpha^1}}\right)^{ap-2b}\left(\lambda_\alpha^1\right)^{n-2\gamma-1-ap}\tau_{y_\alpha^1, \lambda_\alpha^1}f_\alpha^{(i)}
\]
Now, bounding as in Lemma \ref{lem:rescale_mod_bound}, we see that
\begin{align*}
	\int_{\RR^n}\abs{g_\alpha^{(i)}(x)}^{p^\prime}\d{x}
	\le C\int_{\RR^n}\abs{f_\alpha^{(i)}(z)}^{p^\prime}\abs{z}^{-ap}\d{z} = o(1)
\end{align*}
by \eqref{eq:f_i norms}. Therefore, applying H\"older's inequality to the right-hand side of equation \eqref{eq:step5mid} we conclude
\[
\inner{\phi^\prime_\infty(v^1_\alpha), h_\alpha} = o(1).
\] 

\noindent\textbf{Step 6}. We claim that $v^1\ne 0$. By contradiction, suppose that $v^1=0$. Then, $v_\alpha^1 \rightharpoonup 0$ weakly in $\D^{1,p}(\RR^n)$ and $v_\alpha^1 \to 0$ pointwise almost everywhere. Furthermore, by the compactness of the embedding $\D^{1,p}_a(\RR^n,-x_0) \hookrightarrow L^p_{\text{loc}}(\RR^n, \abs{x+x_0}^{-ap})$, we may assume without loss of generality that $v_\alpha^1 \to 0$ in $L^p_{\text{loc}}(\RR^n)$.

Since $\lambda_\alpha^1\to 0$, passing to another subsequence if necessary, it follows from Proposition \ref{prop:domain convergence} that there is a set $\Om_\infty\subseteq \RR^n$, which is either $\RR^n$ or a half-space, such that
\begin{enumerate}
	\item Any compact subset of $\Om_\infty$ is contained in $\Om_\alpha$ for all $\alpha$ large.
	\item Any compact subset of $\left(\overline{\Om_\infty}\right)^\complement$ is contained in $\left(\overline{\Om_\alpha}\right)^\complement\subseteq\Om_\alpha^\complement$ for all $\alpha$ large.
\end{enumerate}
We may cover $\RR^n$ by a countable collection $\mathcal{C}$ of such that if $\Lambda\in \mathcal{C}$ then either 
\begin{enumerate}
	\item $\Lambda = B(y, 1/2)$ for some $y\in\Om_\infty$ or
	\item $\Lambda$ is compactly contained in the complement of $\overline{\Om_\infty}$.
\end{enumerate} 
Pick an arbitrary $\Lambda$ in the collection $\mathcal{C}$. First, if $\Lambda$ is  compactly contained in the complement of $\overline{\Om_\infty}$, then $\Lambda$ is disjoint from $\overline{\Om_\alpha}$ for all $\alpha$ large. Since $v_\alpha^1$ is supported in $\Om_\alpha$, it is clear that $\nabla v_\alpha^1 \to 0$ in $L^p(\Lambda)$.

On the other hand, if $\Lambda = B(y, 1/2)$ for some $y\in \Om_\infty$ then, for all $\alpha$ large, $y\in \Om_\alpha$. Let $h\in C_c^\infty(\RR^n)$ be such that
\[
\begin{cases}
	0\le h \le 1\\
	h = 1 &\text{in } \Lambda = B(y, 1/2)\\
	h=0 &\text{outside } B(y, 1).
\end{cases}
\]
By H\"older's inequality with exponents $q/p$ and conjugate $q/(q-p) = n/p$ followed by the CKN inequality \eqref{eq:CKN},
\begin{align*}
	\int_{\RR^n}\abs{h}^p\abs{v_\alpha^1}^{q}\d{x}
	&=\int_{\operatorname{supp}(h)}\left(\abs{h v_\alpha^1}\right)^p\abs{v_\alpha^1}^{q-p}\d{x}\\
	&\le \left(\int_{\operatorname{supp}(h)}\abs{h v_\alpha^1}^q\d{x}\right)^{p/q}\left(\int_{\operatorname{supp}(h)}\abs{v_\alpha^1}^q\d{x}\right)^{p/n}\\
	&\le C_{a,b}^{-1}\left(\int_{\operatorname{supp}(h)}\abs{\nabla \left(h v_\alpha^1\right)}^p\d{x}\right)\left(\int_{B(y, 1)}\abs{v_\alpha^1}^q\d{x}\right)^{p/n}.
\end{align*}
Then, using equation \eqref{eq:delta bound}, we deduce that for all $\alpha$ large (so that $y\in \Om_\alpha$)
\begin{equation}\label{eq:step 6 bound 1}
	\begin{aligned}
		\int_{\RR^n}\abs{h}^p\abs{v_\alpha^1}^{q}\d{x} 
		&\le \left(\left(\frac{C_{a,b}}{2^p}\right)^{\frac{q}{q-p}}\right)^{p/n} C_{a,b}^{-1}\int_{\RR^n}\abs{\nabla \left(h v_\alpha^1\right)}^p\d{x}\\
		&= 2^{-p}\int_{\RR^n}\abs{\nabla \left(h v_\alpha^1\right)}^p\d{x}.
	\end{aligned}
\end{equation}
On the other hand, using that $v_\alpha^1 \to 0$ in $L_{loc}^p(\RR^n)$, we obtain
\begin{align*}
	\int_{\RR^n}\abs{\nabla \left(h v_\alpha^1\right)}^p\d{x}
	&\le 2^{p-1}\int_{\RR^n}\abs{h}^p\abs{\nabla v_\alpha^1}^p\d{x} + o(1)\\
	&=2^{p-1}\int_{\RR^n}\abs{\nabla v_\alpha^1}^{p-2}\pinner{\nabla v_\alpha^1, \nabla \left[\abs{h}^p v_\alpha^1\right]}\d{x} + o(1)\\
	&= 2^{p-1}\left[\inner{\phi_\infty^\prime(v_\alpha^1), \abs{h}^pv_\alpha^1} + \int_{\RR^n}\abs{v_\alpha^1}^q\abs{h}^p\d{x}\right] + o(1)
\end{align*}
as $\alpha \to\infty$. Then, by step 5, as $\alpha \to\infty$,
\begin{align*}
	\int_{\RR^n}\abs{\nabla \left(h v_\alpha^1\right)}^p\d{x}
	&= 2^{p-1} \int_{\RR^n}\abs{v_\alpha^1}^q\abs{h}^p\d{x} + o(1)\\
	&\le \frac{1}{2}\int_{\RR^n}\abs{\nabla \left(h v_\alpha^1\right)}^p\d{x} + o(1)
\end{align*}
where we have used equation \eqref{eq:step 6 bound 1}. Ergo, $\int_{\RR^n}\abs{\nabla \left(h v_\alpha^1\right)}^p\d{x} \to 0$ which implies that $\nabla v_\alpha^1 \to 0$ in $L^p(\Lambda)$.

Having concluded that $\nabla v_\alpha^1 \to 0$ in $L^p(\Lambda)$ for any $\Lambda\in\mathcal{C}$, we deduce that $\nabla v_\alpha^1 \to 0$ in $L^p_{loc}(\RR^n)$. Multiplying with a cutoff function and applying the CKN inequality \eqref{eq:CKN}, this clearly implies that $v_\alpha^1 \to 0$ in $L_{loc}^q(\RR^n)$; this contradicts equation \eqref{eq:delta bound}.\\

\noindent\textbf{Step 7}. In contrast with the previous case \(a<b\), it is already known that $\lambda^1 =0$. Consequently, Step 7 is superfluous here.\\

\noindent\textbf{Step 8}. Since $\Om_\alpha \to\Om_\infty$ and $v^1\in \D^{1,p}(\Om_\infty)$, there exist a sequence of functions $(\psi_\alpha)$ in $ C_c^\infty(\Om_\infty)$ with $\psi_\alpha \in C_c^\infty(\Om_\alpha)$ for each $\alpha$ such that $\psi_\alpha \to v^1$ in $\D^{1,p}(\Om_\infty)\subseteq \D^{1,p}(\RR^n)$.

By Lemma \ref{lem:rescale_mod_inv_bound}, the sequence given by $\tilde{\psi}_\alpha :=\tau^-_{y_\alpha^1, \lambda_\alpha^1}\psi_\alpha \in C_c^\infty(\Om)$ satisfies 
\[
\norm{\tilde{\psi}_\alpha - \tau^-_{y_\alpha^1, \lambda_\alpha^1}v^1}_{\D_a^{1,p}(\RR^n,0)} 
= \norm{\tau^-_{y_\alpha^1, \lambda_\alpha^1}\left(\psi_\alpha - v^1\right)}_{\D_a^{1,p}(\RR^n,0)}
\le C\norm{\psi_\alpha -v^1}_{\D^{1,p}(\RR^n)}
\]
for some constant $C>0$. Then, since $\psi_\alpha \to v^1$ in $\D^{1,p}(\RR^n)$,
\begin{equation}\label{eq:4}
	\tilde{\psi}_\alpha - \tau^-_{y_\alpha^1, \lambda_\alpha^1}v^1 \to 0 \quad\text{in }\D_a^{1,p}(\RR^n,0)
\end{equation}
Recalling Lemma \ref{lem:bubbling}, it readily follows that, up to a subsequence,

\begin{enumerate}
	\item $\nabla\tilde{\psi}_\alpha\to 0$ pointwise almost everywhere;
	\item The sequence $\tilde{\psi}_\alpha \rightharpoonup 0$ weakly in $\D_a^{1,p}(\Om,0)$ and pointwise almost everywhere.\\
\end{enumerate}

\noindent\textbf{Step 9}. We now show how to iterate to the next stage.  Define
\[
u_\alpha^2 := u_\alpha^1 - \tau^-_{y_\alpha^1, \lambda_\alpha^1}v^1
\]
By Proposition \ref{prop:iteration}, $\phi^\prime_\infty(v^1) = 0$ in $\Om_\infty$ and, as $\alpha\to\infty$, we have
\[
\norm{u_\alpha^2}_{\D_a^{1,p}(\RR^n,0)}^p = \norm{u_\alpha}^p_{\D^{1,p}_a(\Om, 0)} - \norm{v_0}^p_{\D^{1,p}_a(\Om, 0)} - \norm{v^1}^p_{\D^{1,p}(\RR^n)} + o(1)
\]
and
\[
\begin{cases}
	\phi_{0, \infty}(u_\alpha^2) \to c - \phi(v_0) - \phi_\infty(v^1)\\
	\phi_{0, \infty}^\prime(u_\alpha^2) \to 0 &\text{strongly in the dual of } \D_a^{1,p}(\Om,0).
\end{cases}
\]
Next, consider the auxillary sequence
\[
\tilde{u}_\alpha^2 := u_\alpha^1 - \tilde{\psi}_\alpha
\]
which is bounded in $\D_a^{1,p}(\Om,0)$. Since, by Step 8,
\[
u_\alpha^2 - \tilde{u}_\alpha^2 = \tilde{\psi}_\alpha - \tau^-_{y_\alpha^1, \lambda_\alpha^1}v^1 \to 0\quad\text{strongly in }\D_a^{1,p}(\RR^n,0)
\]
we see that
\[
\begin{cases}
	\phi(\tilde{u}_\alpha^2) \to c - \phi(v_0) - \phi_\infty(v^1)\\
	\phi^\prime(\tilde{u}_\alpha^2) \to 0 &\text{strongly in the dual of } \D_a^{1,p}(\Om,0).
\end{cases}
\]

\noindent We may therefore apply steps 2-9 to this new sequence $(\tilde{u}_\alpha^2)$, at each stage removing another solution $v^j$.\\

\noindent\textbf{Step 10}. At the $j$\textsuperscript{th} iteration stage, we had one of the following cases.
\begin{enumerate}
	\item If \(y_\alpha^{j}/\lambda_\alpha^j\) is bounded then, as in the \(a<b\) case, a suitable translation of \(v^j\) solves \eqref{eq:probLim}.
	\item If the ratio \(y_\alpha^{j}/\lambda_\alpha^j\) is unbounded, then \(\phi_\infty^\prime(v^j) = 0\) in a set \(\Om_\infty\). Here, either
	\begin{itemize}[label = \tiny\(\bullet\)]
		\item \(\Om_\infty = \RR^n\) and \(v^j\) solves \eqref{eq:probLimWeightless}.
		\item $\Om_\infty$ is a half-space, so a translated rotation of $v^j$ solves the limiting problem \eqref{eq:probLimWeightlessHalf} in $\HH = \set{x\in \RR^n : x_n > 0}$.
	\end{itemize}
\end{enumerate}
The procedure terminates because, by Lemma \ref{lem:energy}, Palais-Smale sequences necessarily possess non-negative limiting energy and, by Proposition \ref{prop:energy}, at each iteration stage, we remove a solution whose energy is uniformly strictly positive.

\bibliographystyle{plain}
\bibliography{../mybib}

\end{document}